\DeclareMathOperator{\GL}{\mathbf{GL}}
\DeclareMathOperator{\Spec}{Spec}
\DeclareMathOperator{\Gal}{Gal}
\DeclareMathOperator{\N}{N}
\DeclareMathOperator{\Ker}{\mathrm{Ker}\,}
\DeclareMathOperator{\Coker}{\mathrm{Coker}\,}
\DeclareMathOperator{\Inn}{Inn}
\DeclareMathOperator{\Isom}{Isom}
\DeclareMathOperator{\Out}{Out}
\DeclareMathOperator{\Aut}{Aut}
\DeclareMathOperator{\Art}{Art}
\DeclareMathOperator{\Ver}{Ver}
\DeclareMathOperator{\ord}{ord}
\DeclareMathOperator{\id}{id}
\DeclareMathOperator{\Frob}{Frob}
\newcommand{\ab}{\mathrm{ab}}
\newcommand{\nr}{\mathrm{un}}
\newcommand{\un}{\mathrm{un}}
\newcommand{\cyc}{\mathrm{cycl}}
\newcommand{\cycl}{\mathrm{cycl}}
\newcommand{\tame}{\mathrm{tame}}
\newcommand{\alg}{\mathrm{alg}}
\newcommand{\sep}{\mathrm{sep}}
\newcommand{\tor}{\mathrm{tor}}
\newcommand{\MLF}{\mathrm{MLF}}
\newcommand{\HT}{\mathrm{HT}}
\newcommand{\Primes}{\mathfrak{Primes}}
\newcommand{\IsomFilt}{\mathrm{Isom}_{\mathit{filt.}}}
\newcommand{\OutFilt}{\mathrm{Out}_{\mathit{filt.}}}
\newcommand{\CC}{\mathbf{C}}
\newcommand{\FF}{\mathbf{F}}
\newcommand{\QQ}{\mathbf{Q}}
\newcommand{\ZZ}{\mathbf{Z}}
 \newcommand{\termTheorem}{Theorem}
\newcommand{\termTheorems}{Theorems}
\newcommand{\termProposition}{Proposition}
\newcommand{\termPropositions}{Propositions}
\newcommand{\termLemma}{Lemma}
\newcommand{\termLemmas}{Lemmas}
\newcommand{\termConjecture}{Conjecture}
\newcommand{\termConjectures}{Conjectures}
\newcommand{\termCorollary}{Corollary}
\newcommand{\termCorollaries}{Corollaries}
\newcommand{\termDefinition}{Definition}
\newcommand{\termDefinitions}{Definitions}
\newcommand{\termProblem}{Problem}
\newcommand{\termProblems}{Problems}
\newcommand{\termSolution}{Solution}
\newcommand{\termExample}{Example}
\newcommand{\termExamples}{Examples}
\newcommand{\termRemark}{Remark}
\newcommand{\termNote}{Note}
\newcommand{\termEquation}{Equation}
\newcommand{\termEquations}{Equations}
\newcommand{\termFigure}{Figure}
\newcommand{\termFigures}{Figures}
\newcommand{\termTable}{Table}
\newcommand{\termTables}{Tables}
 \newtheoremstyle{latinplain}{16pt}{16pt}{\itshape}{}{\bfseries}{.}{.5em}{}
\def\cleartheorem#1{\expandafter\let\csname#1\endcsname\relax
    \expandafter\let\csname c@#1\endcsname\relax
}
\theoremstyle{latinplain}
\newtheorem{theorem}{\termTheorem}[section]
\newtheorem*{theorem*}{\termTheorem}
\newtheorem{proposition}[theorem]{\termProposition}
\newtheorem*{proposition*}{\termProposition}
\newtheorem{lemma}[theorem]{\termLemma}
\newtheorem*{lemma*}{\termLemma}
\newtheorem*{conjecture*}{\termConjecture}
\newtheorem{corollary}[theorem]{\termCorollary}
\newtheorem*{corollary*}{\termCorollary}
\newtheorem*{problem*}{\termProblem}
\theoremstyle{definition}
\newtheorem{definition}[theorem]{\termDefinition}
\newtheorem*{definition*}{\termDefinition}
\newtheorem{example}[theorem]{\termExample}
\newtheorem*{example*}{\termExample}
\theoremstyle{remark}
\newtheorem*{remark}{\termRemark}
\newcommand{\envendmark}{\hfill$\diamond$}
 \crefname{section}{\S\!}{\S\S\!}
\crefname{subsection}{\S\!}{\S\S\!}
\crefname{theorem}{\MakeLowercase{\termTheorem}}{\MakeLowercase{\termTheorems}}
\crefname{proposition}{\MakeLowercase{\termProposition}}{\MakeLowercase{\termPropositions}}
\crefname{lemma}{\MakeLowercase{\termLemma}}{\MakeLowercase{\termLemmas}}
\crefname{conjecture}{\MakeLowercase{\termConjecture}}{\MakeLowercase{\termConjectures}}
\crefname{corollary}{\MakeLowercase{\termCorollary}}{\MakeLowercase{\termCorollaries}}
\crefname{problem}{\MakeLowercase{\termProblem}}{\MakeLowercase{\termProblems}}
\crefname{definition}{\MakeLowercase{\termDefinition}}{\MakeLowercase{\termDefinitions}}
\crefname{example}{\MakeLowercase{\termExample}}{\MakeLowercase{\termExamples}}
\crefname{equation}{\MakeLowercase{\termEquation}}{\MakeLowercase{\termEquations}}
\crefname{figure}{\MakeLowercase{\termFigure}}{\MakeLowercase{\termFigures}}
\crefname{table}{\MakeLowercase{\termTable}}{\MakeLowercase{\termTables}}
\Crefname{section}{\S\!}{\S\S\!}
\Crefname{theorem}{\termTheorem}{\termTheorems}
\Crefname{proposition}{\termProposition}{\termPropositions}
\Crefname{lemma}{\termLemma}{\termLemmas}
\Crefname{conjecture}{\termConjecture}{\termConjectures}
\Crefname{corollary}{\termCorollary}{\termCorollaries}
\Crefname{problem}{\termProblem}{\termProblems}
\Crefname{definition}{\termDefinition}{\termDefinitions}
\Crefname{example}{\termExample}{\termExamples}
\Crefname{equation}{\termEquation}{\termEquations}
\Crefname{figure}{\termFigure}{\termFigures}
\Crefname{table}{\termTable}{\termTables}
\title{The $m$-step solvable anabelian geometry of \\ mixed-characteristic local fields\thanks{Mathematics Subject Classification (MSC2020): 11S20, 11S31, 11S15, 11F80}}
\author{\textsc{Hyeon} Seung-Hyeon\thanks{Department of Mathematics,
Institute of Science Tokyo, 2-12-1 Ookayama, Meguro, Tokyo 152-8550, Japan
}\:\:\thanks{Email: \texttt{hyun.s.887e@m.isct.ac.jp}}}
\begin{document}

\maketitle

\begin{abstract}
    \noindent Let $K$ be a mixed-characteristic local field.
    For an integer $m \geq 0$, we denote by $K^m / K$ the maximal $m$-step solvable extension of $K$, and by $G_K^m$ the maximal $m$-step solvable quotient of the absolute Galois group $G_K$ of $K$.
    We regard $G_K$ and its quotients as filtered profinite groups via the respective upper-numbering ramification filtrations.
    It is known from the previous result due to Mochizuki that the isomorphism class of $K$ is determined by the isomorphism class of the filtered profinite group $G_K$.
    In this paper, we prove that the isomorphism class of $K$ is determined by the isomorphism class of the maximal $2$-step solvable quotient $G_K^2$ as a filtered profinite group, and furthermore, that $K^m / K$ is determined functorially by the filtered profinite group $G_K^{m + 2}$ (resp. $G_K^{m + 3}$) for $m \geq 2$ (resp. $m = 0, 1$).
\end{abstract}

\tableofcontents

\section{Introduction}
\label{section:2025.10.15.11.02.40}

\emph{Anabelian geometry} is a branch of arithmetic geometry that studies how the arithmetic of a geometric object $X$ is encoded in its (étale) fundamental group.
The central philosophy, originally proposed by Grothendieck \cite{Grothendieck1997}, is that certain geometric objects of an \emph{anabelian} nature should admit characterizations in the language of fundamental groups.
This translates to the principle that a field $k$ of a certain type should be determined by its absolute Galois group $G_k$ (as a profinite group), upon taking $X = \Spec k$.

The Neukirch-Uchida theorem---which predated even the coining of the term ``anabelian''---is one of the earliest validations of this philosophy.
The theorem states that \emph{number fields} can be determined by their absolute Galois groups \cite[Corollary 2]{Uchida1976}, i.e., it holds that, for two number fields $F_\circ$ and $F_\bullet$,
\begin{equation*}
F_\circ
        \cong_{\textrm{fields}}
            F_\bullet \quad
    \Leftrightarrow \quad
G_{F_\circ}
        \cong_{\textrm{prof. gr.}}
            G_{F_\bullet}.
\end{equation*}
This remarkable result naturally led to investigations into its local analogues.
Most notably, Mochizuki \cite{Mochizuki1997} established the following result concerning \emph{mixed-characteristic local fields}: if we regard the Galois groups as \emph{filtered} profinite groups via the upper-numbering ramification groups (cf. \cite[Chap. IV, \S3]{Serre1979}), then, for two mixed-characteristic local fields $K_\circ$ and $K_\bullet$,
\begin{equation*}
K_\circ
        \cong_{\textrm{fields}}
            K_\bullet \quad
    \Leftrightarrow \quad
G_{K_\circ}
        \cong_{\textit{filt.}\textrm{ prof. gr.}}
            G_{K_\bullet}.
\end{equation*}
The ``filtration-preserving condition'' is indispensable here: Yamagata \cite{Yamagata1976} and Jarden-Ritter \cite{JardenRitter1979}, for example, constructed pairs of non-isomorphic mixed-characteristic local fields whose absolute Galois groups are isomorphic as profinite groups.

Meanwhile, there has been an extensive study of whether Neukirch-Uchida-type results hold for various quotients of absolute Galois groups.
Saïdi-Tamagawa \cite{SaidiTamagawa2022} showed that number fields can also be determined by the \emph{maximal $m$-step solvable quotients} (cf. p.\pageref{2025.10.15.11.33.37}) of their absolute Galois groups if $m \geq 3$.
That is, for number fields $F_\circ$ and $F_\bullet$,
\begin{equation*}
F_\circ
        \cong_{\textrm{fields}}
            F_\bullet \quad
    \Leftrightarrow \quad
G_{F_\circ}^3
        \cong_{\textrm{prof. gr.}}
            G_{F_\bullet}^3.
\end{equation*}
Remarkably, these quotients retain enough arithmetic information to determine the field structures, despite carrying less information than the full absolute Galois groups.
However, Koymans-Pagano \cite{KoymansPagano2024} demonstrated that the analogue for \emph{maximal $2$-step nilpotent (i.e., abelian-by-central) quotients} fails: one may choose $F_\circ$ and $F_\bullet$ from
\begin{equation*}
    \QQ(\sqrt{-11}),\quad
    \QQ(\sqrt{-19}),\quad
    \QQ(\sqrt{-43}),\quad
    \QQ(\sqrt{-67}),\quad
    \QQ(\sqrt{-163})
\end{equation*}
so that the maximal $2$-step nilpotent quotients of $G_{F_\circ}$ and $G_{F_\bullet}$ are isomorphic.

\subsection*{Main results}

In this paper, we prove the following: mixed-characteristic local fields can be determined by the \emph{maximal $2$-step solvable (i.e., metabelian) quotients} of their absolute Galois groups as \emph{filtered} profinite groups.
That is, for mixed-characteristic local fields $K_\circ$ and $K_\bullet$,
\begin{equation*}
K_\circ
        \cong_{\textrm{fields}}
            K_\bullet \quad
    \Leftrightarrow\quad
G_{K_\circ}^2
        \cong_{\textit{filt.}\textrm{ prof. gr.}}
            G_{K_\bullet}^2.
\end{equation*}

We begin by recalling the precise statement of Mochizuki's result.
Let $K_\circ$ (resp. $K_\bullet$) be a mixed-characteristic local field of residue characteristic $p_{K_\circ}$ (resp. $p_{K_\bullet}$).
For each $\square \in \left\{ \circ, \bullet \right\}$, we fix an algebraic closure $K_\square^\alg$ of $K_\square$, and we regard the absolute Galois group $G_{K_\square} = \Gal ({K_\square^\alg} / K_\square)$ of $K_\square$ and its quotients as filtered profinite groups via the upper-numbering ramification groups.
Suppose we are given a field isomorphism $f$ from $K_\circ$ into $K_\bullet$.
Then we have $p_{K_\circ} = p_{K_\bullet} \, (\eqqcolon p)$ (see, e.g., \cref{section:2025.10.15.11.33.53}) and $f$ is, in particular, a $\QQ_p$-algebra isomorphism.
We can choose an isomorphism $\theta \colon {K_\circ^\alg} \to {K_\bullet^\alg}$ that extends $f$; it defines a profinite group isomorphism
\begin{equation}
    \label{equation:2025.10.16.23.28.58}
\sigma
    \mapsto
\theta
        \circ
            \sigma
        \circ
            \theta^{-1}
\end{equation}
from $G_{K_\circ}$ to $G_{K_\bullet}$.
We denote by
\begin{equation*}
\eta (f)
    \in
        \Out (G_{K_\circ}, G_{K_\bullet})
        \coloneqq
\Inn (G_{K_\bullet})
            \backslash
                \Isom (G_{K_\circ}, G_{K_\bullet})
\end{equation*}
the equivalence class of the above isomorphism modulo inner automorphisms of $G_{K_\bullet}$ (i.e., the \emph{outer isomorphism} defined by the above isomorphism).
Here, $\Isom (G_{K_\circ}, G_{K_\bullet})$ (resp. $\Out (G_{K_\circ}, G_{K_\bullet})$) denotes the set of isomorphisms (resp. outer isomorphisms) $G_{K_\circ} \to G_{K_\bullet}$ of profinite groups.
We see that $\eta (f)$ does not depend on the choice of the extension $\theta$; therefore, we obtain a natural map
\begin{equation*}
    \eta
    \colon
        \Isom (K_\circ, K_\bullet)
    \to
        \Out (G_{K_\circ}, G_{K_\bullet}).
\end{equation*}
The injectivity of $\eta$ follows directly from the \emph{slimness} of $G_{K_\circ}$ (cf. \cite[Proposition 2.1]{Hoshi2022}); however, $\eta$ is \emph{not} surjective in general (cf. \cite[Chap. XII, \S2]{NeukirchSchmidtWingberg2008}, see also \cite{HoshiNishio2022}).

Moreover, since any field isomorphism between $K_\circ$ and $K_\bullet$ preserves the $p$-adic valuation, one can verify that the isomorphism \eqref{equation:2025.10.16.23.28.58} is filtration-preserving.
Therefore, $\eta$ factors through
\begin{equation*}
\OutFilt (G_{K_\circ}, G_{K_\bullet})
    \coloneqq
\Inn (G_{K_\bullet})
        \backslash
            \IsomFilt (G_{K_\circ}, G_{K_\bullet})
\end{equation*}

\begin{theorem}[Mochizuki \cite{Mochizuki1997}]
    \label{theorem:2025.10.15.11.05.33}
    The image of $\eta$ coincides with $\OutFilt (G_{K_\circ}, G_{K_\bullet})$.
    Equivalently, for an isomorphism
    \begin{equation*}
        \alpha
        \colon
            G_{K_\circ}
        \xrightarrow{\cong}
            G_{K_\bullet}
    \end{equation*}
    of \emph{filtered} profinite groups, there exists a unique isomorphism
    $\theta \colon {K_\circ^\alg} \to {K_\bullet^\alg}$ such that
    \begin{equation*}
\alpha (\sigma)
        =
\theta
            \circ
                \sigma
            \circ
                \theta^{-1}
    \end{equation*}
    for every $\sigma \in G_{K_\circ}$. In particular, we have an isomorphism $\theta \vert_{K_\circ} \colon K_{\circ} \to K_{\bullet}$.
\end{theorem}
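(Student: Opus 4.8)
The plan is to establish existence (uniqueness is formal) by reconstructing, \emph{functorially} and in purely group-theoretic terms from the filtered profinite group $G_{K_\circ}$, the field $K_\circ^{\alg}$ together with its $G_{K_\circ}$-action; a filtration-preserving isomorphism $\alpha$ will then transport this reconstruction to the one attached to $G_{K_\bullet}$ and so yield $\theta$. For uniqueness, if $\theta_1, \theta_2$ both satisfy $\alpha(\sigma) = \theta_i \sigma \theta_i^{-1}$, then $\tau \coloneqq \theta_1^{-1}\theta_2$ is a field automorphism of $K_\circ^{\alg}$ commuting with every element of $G_{K_\circ}$. Such a $\tau$ preserves the fixed field $K_\circ = (K_\circ^{\alg})^{G_{K_\circ}}$ and lies in the centralizer of the open subgroup $G_{K_\circ}$ inside $\Gal(K_\circ^{\alg} / \QQ_p)$; by the slimness of absolute Galois groups of $p$-adic fields (cf. \cite[Proposition 2.1]{Hoshi2022}) this centralizer is trivial, so $\theta_1 = \theta_2$.

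For existence, first I would recover the basic arithmetic invariants from the filtration. The residue characteristic $p$ and the residue-field size $q$ are read off from the wild inertia subgroup (the maximal closed pro-$p$ part detected by the positive-index ramification groups) and the prime-to-$p$ tame quotient; the locations of the filtration jumps then pin down the absolute ramification index and hence $[K_\circ : \QQ_p]$. Next, over the whole tower of finite subextensions I would invoke local class field theory: for each open $H \le G_{K_\circ}$, corresponding to a finite extension $L / K_\circ$, the reciprocity map identifies $H^{\ab}$ with the profinite completion $\widehat{L^\times}$ in such a way that the upper-numbering ramification filtration on $H^{\ab}$ matches the filtration of $\widehat{L^\times}$ by the higher unit groups $U_L^{(i)} = 1 + \mathfrak{m}_L^i$ (cf. \cite[Chap. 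IV, \S3]{Serre1979}). These identifications are compatible with the inclusion and transfer maps as $L$ varies, so passing to the limit over the tower reconstructs, $G_{K_\circ}$-equivariantly, the multiplicative group $(K_\circ^{\alg})^\times$ with its valuation, its group of roots of unity, the Tate module $\varprojlim \mu_{p^n} \cong \ZZ_p(1)$, and thereby the cyclotomic character $\chi$. Because $\alpha$ carries open subgroups to open subgroups and preserves abelianizations and filtrations, all of this data is transported to the $\bullet$-side.

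The hard part will be recovering the \emph{additive} (field) structure: class field theory over the tower delivers $(K_\circ^{\alg})^\times$ as a $G_{K_\circ}$-module with its valuation, but sees addition only indirectly. To bridge the multiplicative and additive pictures I would appeal to $p$-adic Hodge theory in the style of Tate: using the reconstructed cyclotomic character and the vanishing/nonvanishing of the Galois cohomology groups $H^\bullet(G_{K_\circ}, \mathbb{C}_p(i))$, one recovers the Hodge--Tate decomposition of $\mathbb{C}_p = \widehat{K_\circ^{\alg}}$ together with its Tate twist $\mathbb{C}_p(1)$, and the $G_{K_\circ}$-equivariant $p$-adic logarithm $\log \colon U^{(1)} \to \mathbb{C}_p$ then converts the filtered multiplicative group produced above into the additive group, fixing the field structure on $K_\circ^{\alg}$ compatibly with the Galois action. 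This step is the genuine heart of the argument and the reason the filtration hypothesis is indispensable: the matching of ramification and unit filtrations, and the recovery of the valuation and the Hodge--Tate data, all break down if $\alpha$ is only assumed to be an abstract (non-filtered) isomorphism, consistent with the counterexamples of Yamagata \cite{Yamagata1976} and Jarden--Ritter \cite{JardenRitter1979}.

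Finally, with $K_\circ^{\alg}$ and $K_\bullet^{\alg}$ each reconstructed as a topological field equipped with its Galois action, and with every step functorial in filtered isomorphisms, the isomorphism $\alpha$ induces a field isomorphism $\theta \colon K_\circ^{\alg} \to K_\bullet^{\alg}$ intertwining the two Galois actions, i.e.\ $\alpha(\sigma) = \theta \circ \sigma \circ \theta^{-1}$ for all $\sigma \in G_{K_\circ}$. Restricting to the fixed fields yields the desired isomorphism $\theta\vert_{K_\circ} \colon K_\circ \to K_\bullet$, which shows that the image of $\eta$ is exactly $\OutFilt(G_{K_\circ}, G_{K_\bullet})$.
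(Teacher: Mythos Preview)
The paper does not prove this theorem; it is stated in the Introduction as Mochizuki's result \cite{Mochizuki1997} and serves as background for the paper's $m$-step solvable refinements. So there is no ``paper's own proof'' to compare against directly. That said, the paper's proofs of \Cref{theorem:2025.10.22.02.13.03,theorem:2025.10.22.02.13.10} follow Mochizuki's method closely and allow a meaningful comparison.

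Your uniqueness argument is correct. Your existence sketch, however, has a genuine gap at the step you yourself flag as the ``hard part.'' You propose to recover the \emph{field} structure on $K_\circ^{\alg}$ by using the $p$-adic logarithm to pass from the reconstructed multiplicative group to the additive group. But the logarithm, as an abstract $G_{K_\circ}$-equivariant group homomorphism $U^{(1)} \to (\CC_p)_+$, only recovers the additive $G_{K_\circ}$-module $(\CC_p)_+$; it does not by itself pin down the ring multiplication on $\CC_p$ or on $K_\circ^{\alg}$. Knowing $(K_\circ^{\alg})^\times$ as a group, $(K_\circ^{\alg})_+$ as a group, and a group homomorphism between pieces of them is not the same as knowing how addition and multiplication interact distributively.

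Mochizuki's actual route (and the paper's, in \Cref{section:2025.10.15.11.35.27,section:2025.10.15.11.36.45}) avoids reconstructing the field structure on $K_\circ^{\alg}$ altogether. Instead one chooses a finite Galois extension $E/\QQ_p$ containing both $K_\circ$ and $K_\bullet$ and works with one-dimensional $E$-linear representations of $G_{K_\square}$. The key notion is that of a \emph{uniformizing representation}: one whose restriction to an open subgroup $I \subseteq U_{K_\square}$ agrees with a genuine field embedding $\iota_\square \colon K_\square \hookrightarrow E$. \Cref{proposition:2025.10.15.11.36.28} characterizes uniformizing representations purely by their Hodge--Tate numbers, and \Cref{proposition:2025.10.15.11.36.01} shows these numbers are preserved by filtered isomorphisms. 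Thus $\alpha$ carries a uniformizing representation of $G_{K_\circ}$ to one of $G_{K_\bullet}$, yielding field embeddings $\iota_\circ, \iota_\bullet$ into the \emph{same} $E$ that agree on an open subgroup of units; since such a subgroup spans $K_\square$ over $\QQ_p$ (\Cref{lemma:2025.10.15.11.36.50}), one gets $\iota_\circ(K_\circ) = \iota_\bullet(K_\bullet)$ and hence $K_\circ \cong K_\bullet$. The field structure thus comes from the auxiliary field $E$, not from an internal reconstruction. Your outline is missing this mechanism (or an equivalent one).
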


This result is one form of the \emph{Grothendieck Conjecture} for mixed-characteristic local fields: the \emph{filtration-preserving} isomorphisms between $G_{K_\circ}$ and $G_{K_\bullet}$ are precisely the \emph{geometric} isomorphisms, i.e., those induced by field isomorphisms between $K_\circ$ and $K_\bullet$.

\Cref{theorem:2025.10.15.11.05.33} was later extended by Abrashkin, albeit with a different method, to the case of \emph{equal-characteristic local fields} \cite{Abrashkin2000,Abrashkin2010} and \emph{higher-dimensional local fields of odd residue characteristic} \cite[Theorems 5 and 6]{Abrashkin2007}.

We turn to the result of Saïdi-Tamagawa.
Let $F_\circ$ and $F_\bullet$ be number fields.
For $\square \in \{ \circ, \bullet \}$, we denote by $F_\square^m$ the maximal $m$-step solvable extension of $F_\square$.

\begin{theorem}[Saïdi-Tamagawa {\cite[Theorem 1]{SaidiTamagawa2022}}]
    \label{theorem:2025.10.17.01.15.41}
    Assume that there exists an isomorphism
    \begin{equation*}
        A_3
        \colon
            G_{F_\circ}^3
        \overset{\cong}{\to}
            G_{F_\bullet}^3
    \end{equation*}
    of profinite groups.
    Then there exists an isomorphism $h \colon F_\circ \to F_\bullet$.
\end{theorem}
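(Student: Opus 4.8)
The plan is to adapt the Neukirch--Uchida method to the truncated setting of the maximal $3$-step solvable quotient. Since the derived series is preserved by any isomorphism of profinite groups, $A_3$ maps $(G_{F_\circ}^3)^{(k)}$ to $(G_{F_\bullet}^3)^{(k)}$ and hence descends to compatible isomorphisms $A_2 \colon G_{F_\circ}^2 \to G_{F_\bullet}^2$ and $A_1 \colon G_{F_\circ}^{\ab} \to G_{F_\bullet}^{\ab}$ on the $2$-step solvable and abelian quotients. By global class field theory the last of these amounts to an isomorphism of the profinitely completed idèle class groups, which will serve as the receptacle into which all local data is eventually assembled. The first main task is therefore to locate, purely group-theoretically inside $G_F^3$, the images $\overline{D}_v$ of the decomposition groups at the nonarchimedean primes $v$, together with their inertia and Frobenius structure (the archimedean places, whose decomposition groups have order $1$ or $2$, are a minor separate point).

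For this I would establish a \emph{local theory}: a group-theoretic recipe that recognizes which closed subgroups of $G_F^3$ arise as decomposition groups. The model is Neukirch's characterization, in which $D_v \cong G_{F_v}$ is singled out by the distinctive structure of a local Galois group---an extension of the unramified quotient $\widehat{\ZZ}$, generated by Frobenius, by the inertia subgroup with its tame/wild stratification, whose abelianization $\widehat{F_v^\times}$ is described by local class field theory. Here one only has access to the maximal $3$-step solvable quotient $G_{F_v}^3$ of each local group, so the recipe must be formulated intrinsically at that truncation level and shown to have no spurious solutions. Having recovered the $\overline{D}_v$, a Chebotarev density argument carried out inside the solvable tower should yield a bijection $v \leftrightarrow w$ between the primes of $F_\circ$ and $F_\bullet$ compatible with $A_3$, matching residue characteristics, residue degrees, and Frobenius elements.

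The final task is to reconstruct the field from this compatible package. Following Uchida, the bijection of primes and the local reciprocity maps (visible through $A_1$ via class field theory) should determine an isomorphism of the multiplicative groups $F_\circ^\times \to F_\bullet^\times$, realized as subgroups of the idèle class groups and compatible with all valuations. The remaining, most delicate, point is to recover the \emph{additive} structure: one detects the relation $a + b = c$ group-theoretically, e.g.\ through the behaviour of $1 + x$ under the matched valuations and congruence conditions at the corresponding primes, thereby upgrading the multiplicative isomorphism to a genuine field isomorphism $h \colon F_\circ \to F_\bullet$.

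I expect the principal obstacle to be the local theory in the truncated setting, namely proving that decomposition groups remain group-theoretically characterizable using only the $3$-step solvable quotient. In the full absolute Galois group one may exploit the entire local Galois group and its maximality among subgroups of a prescribed isomorphism type; after truncation one must verify both that enough of the local structure---inertia, wild inertia, Frobenius, and the reciprocity pairing---survives in $G_{F_v}^3$ to pin the groups down, and that no non-decomposition subgroup accidentally satisfies the criterion. This is precisely where the hypothesis $m \geq 3$ (rather than $m = 2$) is expected to be essential, supplying the extra commutator layer needed to separate genuine decomposition groups from impostors and to run the density argument entirely within the solvable quotient.
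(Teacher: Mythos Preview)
This theorem is not proved in the present paper; it is quoted verbatim from Sa\"idi--Tamagawa \cite[Theorem~1]{SaidiTamagawa2022} as background and motivation for the local analogues (\cref{theorem:2025.10.15.11.32.12,theorem:2025.10.15.11.32.25}) that constitute the paper's own contribution. There is therefore no proof here against which to compare your proposal.

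For what it is worth, your outline is a reasonable high-level summary of the architecture of the Sa\"idi--Tamagawa argument: adapt the Neukirch--Uchida scheme (local theory $\to$ group-theoretic recognition of decomposition subgroups $\to$ bijection of primes via a Chebotarev-type density argument $\to$ reconstruction of the field) to the $m$-step solvable truncation. You also correctly identify the crux: the \emph{local theory}, i.e.\ showing that the images of decomposition groups inside $G_F^3$ can be characterised purely group-theoretically with no spurious solutions, is precisely where most of the work in \cite{SaidiTamagawa2022} lies and where the hypothesis $m\ge 3$ enters. Your description of the final step is somewhat loose---the passage from the matched local data to a field isomorphism in Uchida's method is more delicate than ``detect $a+b=c$ via valuations''---but as a strategic sketch the proposal points in the right direction. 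If you want to turn it into an actual proof you will need to consult \cite{SaidiTamagawa2022} directly, since none of the requisite machinery is developed in the present paper.
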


\begin{theorem}[Saïdi-Tamagawa {\cite[Theorem 2]{SaidiTamagawa2022}}]
    \label{theorem:2025.10.17.01.15.55}
    Let $m$ be an integer $\geq 0$.
    For an isomorphism
    \begin{equation*}
        A_{m + 4}
        \colon
            G_{F_\circ}^{m + 4}
        \overset{\cong}{\to}
            G_{F_\bullet}^{m + 4}
    \end{equation*}
    of profinite groups, there exists an isomorphism $\varTheta_{m + 1} \colon F_\circ^{m + 1} \to F_\bullet^{m + 1}$ such that
    \begin{equation*}
A_{m + 1} (\sigma)
        =
\varTheta_{m + 1}
            \circ
                \sigma
            \circ
                \varTheta_{m + 1}^{-1}
    \end{equation*}
    for every $\sigma \in G_{F_\circ}^{m + 1}$, where $A_{m + 1} \colon G_{F_\circ}^{m + 1} \to G_{F_\bullet}^{m + 1}$ is the isomorphism induced by $A_{m + 4}$.
    Moreover,
    \begin{itemize}
        \item if $m \geq 1$, the isomorphism $\varTheta_{m + 1}$ is uniquely determined by $A_{m + 4}$;
        \item if $m = 0$, the isomorphism $\varTheta_{m + 1} \vert_{F_\circ}\colon F_\circ \to F_\bullet$ is uniquely determined by $A_{m + 4}$.
    \end{itemize}
\end{theorem}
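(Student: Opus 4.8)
The plan is to run a Neukirch--Uchida argument adapted to the solvable tower, upgrading the bare existence statement of \Cref{theorem:2025.10.17.01.15.41} to a functorial one. First I would observe that the terms of the derived series of $G_{F_\square}^{m+4}$ are characteristic subgroups, so that the canonical surjections $G_{F_\square}^{m+4} \twoheadrightarrow G_{F_\square}^{n}$ (realizing $G_{F_\square}^{n}$ as the quotient by the $n$-th derived subgroup) are group-theoretic for every $n \le m+4$. Hence $A_{m+4}$ descends to a compatible system of isomorphisms $A_n$, and in particular produces the $A_{m+1}$ appearing in the statement. The task then splits into two parts: constructing a field isomorphism $\varTheta_{m+1} \colon F_\circ^{m+1} \to F_\bullet^{m+1}$ that realizes $A_{m+1}$ by conjugation, and establishing the asserted uniqueness.

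The crux is a purely group-theoretic reconstruction of the decomposition groups. I would characterize, in terms of the profinite group $G_F^{m+4}$ alone, the decomposition subgroups $D_v$ (together with their inertia subgroups) attached to the primes $v$ of $F$, and show that $A_{m+4}$ matches the decomposition groups of $F_\circ$ with those of $F_\bullet$, inducing a bijection on primes. This is where the extra solvable steps are consumed: the Neukirch-type characterization of $D_v$---via the local structure visible in the abelianized layers, the distinguished behavior of Frobenius, and a Chebotarev/density input---is available only when there is enough room above the level $m+1$ that one ultimately wants to reconstruct, which is the source of the ``$+3$'' shift. Once the decomposition groups are matched, local class field theory reconstructs each completion $F_{\circ, v}$ from the corresponding local quotient of $D_v$, compatibly with $A$.

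Granting the local reconstructions, I would glue them into a global isomorphism. The base-field isomorphism $F_\circ \to F_\bullet$ is furnished by \Cref{theorem:2025.10.17.01.15.41}; the genuinely new content is to propagate it up the tower $F_\circ \subset F_\circ^{1} \subset \cdots \subset F_\circ^{m+1}$. I would build $\varTheta_{m+1}$ layer by layer, at each stage using that $F_\square^{n+1}$ is cut out by the next derived subgroup and that $A_{n+1}$ prescribes how that layer transforms; the required relation $A_{m+1}(\sigma) = \varTheta_{m+1}\,\sigma\,\varTheta_{m+1}^{-1}$ is precisely what forces the successive choices to cohere into a single field isomorphism.

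Finally, uniqueness. If $\varTheta_{m+1}$ and $\varTheta_{m+1}'$ both realize $A_{m+1}$, then $\chi = \varTheta_{m+1}' \circ \varTheta_{m+1}^{-1}$ is an automorphism of $F_\bullet^{m+1}$ centralizing $G_{F_\bullet}^{m+1}$, and in particular normalizing it and hence preserving its fixed field $F_\bullet$. For $m \ge 1$ the group $G_{F_\bullet}^{m+1}$ is non-abelian and slim, so this centralizer is trivial and $\varTheta_{m+1}$ is rigid. For $m = 0$ the quotient $G_{F_\bullet}^{1} = G_{F_\bullet}^{\ab}$ is abelian and thus centralizes itself, so $\chi$ may be any element of $\Gal(F_\bullet^{1}/F_\bullet)$; the only invariant that survives is the restriction to the base field, giving exactly the weaker conclusion that $\varTheta_{1}\vert_{F_\circ}$ is determined. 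I expect the main obstacle to be the decomposition-group step: unlike in the full-Galois-group setting, one cannot read off $D_v$ from the structure of $G_F$ itself, and must instead prove that the decomposition data persists into, and is recoverable from, the truncated quotient $G_F^{m+4}$, and then verify that this recovery is respected by $A_{m+4}$.
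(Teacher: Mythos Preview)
The paper does not contain a proof of this statement. \Cref{theorem:2025.10.17.01.15.55} is quoted from Sa\"{\i}di--Tamagawa as background motivating the paper's own local-field analogues (\Cref{theorem:2025.10.15.11.32.12,theorem:2025.10.15.11.32.25}); no argument for it is given or sketched anywhere in the text. So there is nothing here to compare your proposal against.

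For what it is worth, your outline has the right global shape---group-theoretic recovery of decomposition data inside the truncated quotient, followed by a Neukirch--Uchida globalization---and you correctly flag the decomposition-group step as the hard part. One gap in your uniqueness paragraph: from ``$\chi$ centralizes $G_{F_\bullet}^{m+1}$'' you deduce only that $\chi$ \emph{stabilizes} $F_\bullet$ setwise, not that $\chi\vert_{F_\bullet}=\id$. You need the latter before you can place $\chi$ inside $G_{F_\bullet}^{m+1}$ and invoke center-freeness (not slimness---that is stronger than what you use). The paper's proof of the local analogue, \Cref{theorem:2025.10.22.02.13.10}, fills the corresponding step via the functoriality of the reciprocity map (conjugation by $\gamma$ sends $\Art_{K_\circ}(x)$ to $\Art_{K_\circ}(\gamma(x))$, whence $\gamma(x)=x$); the same device with the global Artin map is what you need here.
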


The statement of \cref{theorem:2025.10.17.01.15.41} lacks functoriality, meaning that there is no clear description of how $A_3$ and $h$ are related to each other, which makes it a \emph{weak bi-anabelian} result.
In contrast, the isomorphism class of a given number field $F$ is \emph{functorially} determined from the isomorphism class of $G_{F}^{4}$ in \cref{theorem:2025.10.17.01.15.55}; hence one might claim that \cref{theorem:2025.10.17.01.15.55} is a \emph{strong bi-anabelian} result.

We now proceed to the main results of this paper, which are the respective local counterparts of \cref{theorem:2025.10.17.01.15.41,theorem:2025.10.17.01.15.55}.

\begin{theorem}[\Cref{theorem:2025.10.22.02.13.03}]
    \label{theorem:2025.10.15.11.32.12}
    Assume that there exists an isomorphism
    \begin{equation*}
        \alpha_2
        \colon
            G_{K_\circ}^2
        \overset{\cong}{\to}
            G_{K_\bullet}^2
    \end{equation*}
    of \emph{filtered} profinite groups.
    Then there exists an isomorphism $f \colon K_\circ \to K_\bullet$.
\end{theorem}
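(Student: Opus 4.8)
The plan is to reconstruct the isomorphism class of $K$ group-theoretically from the filtered profinite group $G_K^2$. Since \cref{theorem:2025.10.15.11.32.12} asserts only the \emph{existence} of a field isomorphism, it suffices to exhibit a recipe that outputs, from $G_K^2$ together with its ramification filtration, a complete set of isomorphism invariants of $K$ as a $\QQ_p$-algebra. Every step must be intrinsic to the filtered group, so that the given filtered isomorphism $\alpha_2$ carries the invariants of $K_\circ$ to those of $K_\bullet$ and forces $K_\circ \cong K_\bullet$.

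First I would descend to the abelian layer. The abelianization $G_K^1 = G_K^2 / \overline{[G_K^2, G_K^2]}$ is a functorial quotient, and since the upper-numbering filtration is compatible with quotients (Herbrand), $\alpha_2$ induces a filtered isomorphism $G_{K_\circ}^1 \cong G_{K_\bullet}^1$. By local class field theory $G_K^1 \cong \widehat{K^\times} \cong \widehat{\ZZ} \times \mathcal{O}_K^\times$, and under reciprocity the filtration matches that of $K^\times$ by the higher unit groups $U_K^{(i)}$. From this I read off the coarse invariants: the residue characteristic $p$ is the unique prime $\ell$ for which the free pro-$\ell$ rank of $G_K^1$ exceeds $1$ (it equals $[K:\QQ_p]+1$ for $\ell = p$ and $1$ otherwise); the prime-to-$p$ torsion recovers $q = p^f$ and hence $f$; and the jumps of the filtration recover the absolute ramification index $e$, the degree $n = ef$, and the number of $p$-power roots of unity in $K$. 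These numerical invariants are far from determining $K$ (even the full profinite group $G_K$ fails to, by Yamagata and Jarden--Ritter), so the abelian layer alone is insufficient and the metabelian layer must be brought in.

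The decisive step is to exploit the commutator $[G_K^2, G_K^2] = \Gal(K^2 / K^1)$ as a filtered module over $G_K^1 = \Gal(K^1/K)$. Writing $K^1 = \bigcup_L L$ as the union of its finite abelian subextensions $L/K$, this module $G_{K^1}^{\ab}$ is governed, via class field theory for the $L$, by the profinite multiplicative groups $\widehat{L^\times}$ together with their valuations, the norm maps, and the $\Gal(L/K)$-actions. Thus $G_K^2$ recovers not merely $\widehat{K^\times}$ but the entire arithmetic of the abelian tower above $K$, each piece carrying the filtration transported from the ramification filtration of $G_K^2$. The final task is to convert this multiplicative data into the additive, i.e.\ ring-theoretic, structure of $K$: for $i > e/(p-1)$ the $p$-adic logarithm gives $\Gal$-equivariant isomorphisms $U_K^{(i)} \xrightarrow{\cong} \mathfrak{m}_K^i$ onto additive fractional ideals, and the shift of the filtration induced by multiplication by a uniformizer---itself detected through the valuation on $\widehat{K^\times}$ and the Frobenius---recovers $\mathcal{O}_K$ as a filtered $\ZZ_p$-algebra, hence $K$ up to isomorphism.

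The main obstacle I anticipate is precisely this passage from multiplicative to additive structure, carried out with only \emph{two} solvable steps available: one must show that the metabelian module $\Gal(K^2/K^1)$ already pins down the additive relations among units across the tower, without recourse to higher $G_K^m$, and one must track the upper-numbering filtration faithfully through the infinite extension $K^1/K$, where the Herbrand transition functions distort the numbering and the Hasse--Arf bookkeeping becomes delicate. A secondary difficulty is verifying that each stage of the reconstruction is genuinely intrinsic to the filtered group $G_K^2$, and hence transported by $\alpha_2$. Once both the additive and multiplicative structures of $K$ are recovered group-theoretically, the existence of the field isomorphism $f \colon K_\circ \to K_\bullet$ follows; this parallels the mono-anabelian content of Mochizuki's \cref{theorem:2025.10.15.11.05.33} but is achieved using only the metabelian quotient.
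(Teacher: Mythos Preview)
Your outline recovers the coarse invariants correctly, and you rightly identify the multiplicative-to-additive passage as the crux. But the step where you claim that the $p$-adic logarithm together with ``the shift of the filtration induced by multiplication by a uniformizer \dots\ recovers $\mathcal{O}_K$ as a filtered $\ZZ_p$-algebra'' is a genuine gap. The logarithm gives you $\mathfrak{m}_K^i$ as a filtered $\ZZ_p$-\emph{module}, and the filtration shift tells you the additive endomorphism ``multiply by $\pi$''; it does \emph{not} hand you the ring multiplication on $\mathcal{O}_K$. Knowing the $\ZZ_p$-module $\mathcal{O}_K$, its filtration by powers of $\mathfrak{m}_K$, and the operator $\times\pi$ is far from knowing the $\QQ_p$-algebra $K$: all totally ramified degree-$e$ extensions of $\QQ_p$ share this data up to isomorphism. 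Your tower of $\widehat{L^\times}$ for $L\subseteq K^{\ab}$ adds Galois-module information, but you have not explained any mechanism by which that pins down the \emph{multiplication} among elements of $K$, and this is precisely the point at which the Jarden--Ritter and Yamagata counterexamples bite when the filtration is absent.

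The paper's proof takes a completely different route and never attempts to reconstruct the ring structure directly. Instead it runs Mochizuki's argument via \emph{uniformizing representations}: one fixes a common finite Galois $E/\QQ_p$ containing $K_\circ$ and $K_\bullet$, takes the canonical $E$-linear one-dimensional representation $\rho_\circ$ of $G_{K_\circ}$ built from $\Ver$ and $\Art_E^{-1}$, and observes that the property of being uniformizing is characterised by its Hodge--Tate numbers (\cref{proposition:2025.10.15.11.36.28}). The paper's key contribution is \cref{proposition:2025.10.15.11.36.01}: because $\rho_\circ$ is \emph{abelian}, its Hodge--Tate numbers can be read off from the filtered group $G_{K_\circ}^2$ alone, via the group-theoretic reconstruction of $(\mathscr{C}_{K^{\ab}})_+$ in \cref{proposition:2025.10.15.11.38.37}. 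Hence $\rho_\bullet \coloneqq \rho_\circ\circ\alpha_1^{-1}$ is again uniformizing for $K_\bullet$, which by definition produces a field embedding $\iota_\bullet\colon K_\bullet\hookrightarrow E$ whose image agrees with $\iota_\circ(K_\circ)$ on an open subgroup of units; \cref{lemma:2025.10.15.11.36.50} then forces $\iota_\circ(K_\circ)=\iota_\bullet(K_\bullet)$ inside $E$. The field isomorphism is obtained \emph{inside an ambient field}, bypassing any direct reconstruction of the multiplication on $K$. This Hodge--Tate/uniformizing-representation step is the missing idea in your proposal.
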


\begin{theorem}[\Cref{theorem:2025.10.22.02.13.10}]
    \label{theorem:2025.10.15.11.32.25}
    Let $m$ be an integer $\geq 0$.
    For an isomorphism
    \begin{equation*}
        \alpha_{m + 3}
        \colon
            G_{K_\circ}^{m + 3}
        \xrightarrow{\cong}
            G_{K_\bullet}^{m + 3}
    \end{equation*}
    of \emph{filtered} profinite groups, there exists an isomorphism
    $\theta_{m + 1} \colon K_{\circ}^{m + 1} \to K_{\bullet}^{m + 1}$ such that
    \begin{equation*}
\alpha_{m + 1} (\sigma)
        =
\theta_{m + 1}
            \circ
                \sigma
            \circ
                \theta_{m + 1}^{-1}
    \end{equation*}
    for every $\sigma \in G_{K_\circ}^{m + 1}$, where $\alpha_{m + 1} \colon G_{K_\circ}^{m + 1} \to G_{K_\bullet}^{m + 1}$ is the isomorphism induced by $\alpha_{m + 3}$.
    Moreover,
    \begin{enumerate}[label=(\roman*)]
        \item if $m \geq 1$, the isomorphism $\theta_{m + 1}$ is uniquely determined by $\alpha_{m + 3}$;
        \item if $m = 0$, the isomorphism $\theta_{m + 1} \vert_{K_\circ} \colon K_\circ \to K_\bullet$ is uniquely determined by $\alpha_{m + 3}$.
    \end{enumerate}
\end{theorem}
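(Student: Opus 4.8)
The strategy is to induct on $m$, after recasting the theorem as a single family of statements and exploiting that each layer of the solvable tower is a \emph{maximal abelian} extension of the previous one. Writing $G_{K_\square}^{(j)}$ for the $j$-th derived subgroup of $G_{K_\square}$, we have $\Gal(K_\square^\alg / K_\square^j) = G_{K_\square}^{(j)}$ and $\Gal(K_\square^{j+1}/K_\square^j) = G_{K_\square}^{(j)}/\overline{[G_{K_\square}^{(j)}, G_{K_\square}^{(j)}]}$, whence $K_\square^{j+1} = (K_\square^j)^\ab$. I will prove, by induction on $n \geq 0$, the statement $P(n)$: every filtered isomorphism $\alpha_{n+2} \colon G_{K_\circ}^{n+2} \xrightarrow{\cong} G_{K_\bullet}^{n+2}$ gives rise to a field isomorphism $\theta_n \colon K_\circ^n \to K_\bullet^n$ with $\alpha_n(\sigma) = \theta_n \sigma \theta_n^{-1}$ for all $\sigma \in G_{K_\circ}^n$. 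The base case $P(0)$ is exactly \cref{theorem:2025.10.15.11.32.12} (the compatibility over $G^0 = \{1\}$ being empty), and the theorem is the collection of statements $P(n)$ for $n = m+1 \geq 1$. Note that the gap of two between the index $n+2$ of the hypothesis and the level $n$ of the conclusion is already present at the base, where $K = K^0$ is reconstructed from $G_K^2$.

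For the inductive step $P(m) \Rightarrow P(m+1)$, I start from $\alpha_{m+3}$, pass to the induced $\alpha_{m+2}$, and apply $P(m)$ to obtain $\theta_m \colon K_\circ^m \to K_\bullet^m$ compatible with $\alpha_m$. Since a field isomorphism extends to the algebraic closures and carries a maximal abelian extension to a maximal abelian extension, $\theta_m$ admits an extension $\theta_{m+1} \colon K_\circ^{m+1} = (K_\circ^m)^\ab \to (K_\bullet^m)^\ab = K_\bullet^{m+1}$, well defined up to post-composition with an element of the abelian group $N_\bullet \coloneqq \Gal(K_\bullet^{m+1}/K_\bullet^m)$. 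The task is to choose the extension so that conjugation by $\theta_{m+1}$ equals $\alpha_{m+1}$ on all of $G_{K_\circ}^{m+1}$. As both maps descend to $\alpha_m = {}$conjugation by $\theta_m$ on the quotient $G^m$ and both carry $N_\circ \coloneqq \Gal(K_\circ^{m+1}/K_\circ^m)$ isomorphically onto $N_\bullet$, the entire comparison is concentrated on the top abelian layer: it suffices to show that, after a suitable adjustment by $N_\bullet$, the restriction $\alpha_{m+1}|_{N_\circ}$ coincides with the isomorphism $N_\circ \to N_\bullet$ induced by $\theta_m$.

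This is where local class field theory and the filtration hypothesis enter, and where I expect the main difficulty to lie. Taking reciprocity maps in the limit over the finite subextensions of $K_\square^m / K_\square$, one identifies $N_\square$ with the profinite completion of $(K_\square^m)^\times$ as a $\Gal(K_\square^m/K_\square)$-module, and the upper-numbering ramification filtration on $N_\square$ corresponds, through Hasse--Arf, to the unit filtration $\{U^{(i)}\}$ on $(K_\square^m)^\times$, so that the valuation and the higher unit groups become intrinsic to the filtered group $N_\square$. By functoriality of reciprocity, $\theta_m$ then induces a $\Gal$-equivariant filtered isomorphism $N_\circ \to N_\bullet$, and $\alpha_{m+1}|_{N_\circ}$ is another such; the crux is to prove they agree. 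Equivariance and the filtration alone constrain the two only up to the ambiguity in normalizing the valuation (a Frobenius-type scalar), and pinning this normalization down is precisely what is \emph{not} visible on $N_\square$ itself. The resolution I anticipate is to read the missing arithmetic off the next layers: the commutator structure relating $N$ to $\Gal(K^{m+2}/K^{m+1})$ records the valuation datum, so the full hypothesis $\alpha_{m+3}$—two layers above the $G^{m+1}$ in which $N$ sits—forces the normalizations to match. This two-layer depth is the same phenomenon that makes the base case reconstruct $K$ from $G^2$ rather than from $G^1$.

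Finally, uniqueness is a slimness argument, and it produces exactly the dichotomy in the statement. If $\theta_{m+1}$ and $\theta_{m+1}'$ are two compatible lifts, they agree on $K_\circ^m$, so $\delta \coloneqq (\theta_{m+1}')^{-1}\theta_{m+1}$ lies in $N_\circ = \Gal(K_\circ^{m+1}/K_\circ^m) \subseteq G_{K_\circ}^{m+1}$, and the compatibility of both lifts with $\alpha_{m+1}$ forces $\delta$ to commute with every $\sigma \in G_{K_\circ}^{m+1}$, i.e.\ $\delta$ is a central element of $\Gal(K_\circ^{m+1}/K_\circ) = G_{K_\circ}^{m+1}$. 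For $m \geq 1$ the group $G_{K_\circ}^{m+1}$ is slim, so its center is trivial, $\delta = \id$, and $\theta_{m+1}$ is unique. For $m = 0$ the layer $G_{K_\circ}^1$ is abelian, the ambiguity is all of $\Gal(K_\circ^1/K_\circ)$, and since every such element fixes $K_\circ$ pointwise, only the restriction $\theta_1|_{K_\circ}$ is determined---exactly as asserted.
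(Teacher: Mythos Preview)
Your inductive strategy is natural, but the inductive step has a genuine gap precisely where you flag ``the crux''. You assert that it suffices to match $\alpha_{m+1}|_{N_\circ}$ with the map $N_\circ\to N_\bullet$ induced by $\theta_m$, up to adjustment by $N_\bullet$. But conjugating by an element of the \emph{abelian} group $N_\bullet$ does not change the restriction to $N_\circ$, so the adjustment is vacuous there; and even if the two isomorphisms $\alpha_{m+1}$ and $c_{\theta_{m+1}}$ agree on $N_\circ$ and on the quotient $G^m$, their difference is a $1$-cocycle $G_{K_\circ}^m\to N_\circ$ whose class in $H^1(G_{K_\circ}^m,N_\circ)$ has no reason to vanish. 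Your ``anticipated resolution'' via the commutator structure of the next layers is a plausible heuristic but not an argument. (Your uniqueness paragraph also begins with ``they agree on $K_\circ^m$'', which your $P(m)$ does not assert; the paper instead shows directly, via functoriality of $\Art_{K_\circ}$, that any compatible $\theta_{m+1}$ fixes $K_\circ$, and only then invokes center-freeness of $G_{K_\circ}^{m+1}$.)

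The paper sidesteps the $H^1$ obstruction by not inducting at all. The two extra layers are used in a different way: for every finite Galois $L_\circ/K_\circ$ inside $K_\circ^{m+1}$, the open subgroup $H_\circ=\Gal(K_\circ^{m+3}/L_\circ)$ contains $(G_{K_\circ}^{m+3})^{[m+1]}$, so \cref{lemma:2025.10.15.11.33.30} gives $H_\circ^2=G_{L_\circ}^2$, and $\alpha_{m+3}|_{H_\circ}$ is filtration-preserving by \cref{lemma:2025.10.15.11.33.03}. One can therefore apply the base case \cref{theorem:2025.10.22.02.13.03} \emph{directly at each finite level} to produce $\theta_{L_\circ}\colon L_\circ\to L_\bullet$. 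These are compatible as $L_\circ$ grows because, by the proof of \cref{theorem:2025.10.22.02.13.03}, each $\theta_{L_\circ}$ extends the group isomorphism $\alpha_1|_{I_\circ}$ on an open $I_\circ\subseteq U_{L_\circ}$, these open subgroups are compatible under inclusion, and $I_\circ$ spans $L_\circ$ over $\QQ_p$ by \cref{lemma:2025.10.15.11.36.50}. Passing to the limit gives $\theta_{m+1}$, and the equivariance $\alpha_{m+1}(\sigma)=\theta_{m+1}\sigma\theta_{m+1}^{-1}$ is checked on each $I_\circ$, where both sides are visibly induced by $\alpha_{m+3}$. So the ``depth two'' hypothesis is spent not on climbing one rung of your tower, but on having $G_L^2$ available for \emph{all} $L\subseteq K^{m+1}$ at once.
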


As with Saïdi-Tamagawa's results for number fields (\cref{theorem:2025.10.17.01.15.41,theorem:2025.10.17.01.15.55}), we distinguish between weak and strong bi-anabelian results.
The statement of \cref{theorem:2025.10.15.11.32.12} lacks functoriality---there is no clear description of how $\alpha_2$ and $f$ are related---making it a \emph{weak bi-anabelian} result analogous to \cref{theorem:2025.10.17.01.15.41}.
(It also remains open whether there exists a \emph{filtration-preserving} isomorphism $G_{K_\circ}^2 \to G_{K_\bullet}^2$ that is \emph{not geometric}.)
In contrast, \cref{theorem:2025.10.15.11.32.25} functorially determines the isomorphism class of a given mixed-characteristic local field $K$ from $G_K^3$, providing a \emph{strong bi-anabelian} result that parallels \cref{theorem:2025.10.17.01.15.55}.
(To be precise, \cref{theorem:2025.10.15.11.32.25} asserts that any isomorphism $G_{K_\circ}^{m + 1} \to G_{K_\bullet}^{m + 1}$ \emph{induced by a filtration-preserving isomorphism $G_{K_\circ}^{m + 3} \to G_{K_\bullet}^{m + 3}$} must be \emph{geometric}, leaving open the question of whether $m + 3$ can be replaced by a smaller degree.)

In light of the developments so far, \cref{theorem:2025.10.15.11.32.12,theorem:2025.10.15.11.32.25} can be viewed as
\begin{itemize}
    \item local analogues of \cref{theorem:2025.10.17.01.15.41,theorem:2025.10.17.01.15.55}, respectively;
    \item $m$-step solvable variants of \cref{theorem:2025.10.15.11.05.33}---weak and strong bi-anabelian, respectively.
\end{itemize}

\begin{equation*}
    \begin{tikzpicture}
    \matrix (m) [
        matrix of nodes,
        nodes={draw},
        column sep=9mm,
        row sep=15mm,
    ] {
        & Neukirch-Uchida & \\
        Mochizuki \cite{Mochizuki1997} & & Saïdi-Tamagawa \cite{SaidiTamagawa2022} \\
        & \Cref{theorem:2025.10.15.11.32.12,theorem:2025.10.15.11.32.25} & \\
    };
    \begin{scope}[
        font=\footnotesize,
        inner sep=.25em,
        every node/.style={fill=white},
        shorten >=3mm,
        shorten <=3mm,
        ]
        ;
        \draw[-to](m-1-2) -- node {local analogue} (m-2-1);
        \draw[-to](m-1-2) -- node {$m$-step solvable version ($m\geq 3$)} (m-2-3);
        \draw[-to](m-2-1) -- node {$m$-step solvable version ($m\geq 2$)} (m-3-2);
        \draw[-to](m-2-3) -- node {local analogue} (m-3-2);
    \end{scope}
    \end{tikzpicture}
\end{equation*}

We prove \cref{theorem:2025.10.15.11.32.12,theorem:2025.10.15.11.32.25} in \cref{section:2025.10.15.11.36.45}.
The proof of \cref{theorem:2025.10.15.11.32.12} can be thought of as an application of $p$-adic Hodge theory.
In fact, we implement only a few adjustments to the method developed by Mochizuki.
For instance, in \cref{section:2025.10.15.11.35.27}, we show that, for a mixed-characteristic local field $K$, the \emph{Hodge-Tate numbers} of a given abelian $p$-adic representation of $G_K$ can be determined by using only the invariants of $K$ recoverable from the filtered profinite group $G_K^2$; this is a sharpening of the preceding result due to Mochizuki \cite[Corollary 3.1]{Mochizuki1997}.

For some of the invariants of $K$ that we use in the proof, we will give explicit \emph{group-theoretic algorithms} (cf. \cite[\S3]{Hoshi2022}) to demonstrate that those invariants can be recovered entirely from the (filtered) profinite group structure of $G_K^m$ for some $m \geq 1$ in \cref{section:2025.10.15.11.33.53,section:2025.10.15.11.34.49}.
The reader will further observe that some of those invariants can be recovered even without the filtration attached to the profinite group $G_K^m$, although the filtration is essential when we endow the $G_K^2$-module $(K^1)_+$ with the $p_K$-adic topology, which forces us to keep the additional conditions on filtration in \cref{theorem:2025.10.15.11.32.12,theorem:2025.10.15.11.32.25} (see \cref{proposition:2025.10.15.11.38.37} for more details).

\subsection*{Terminology and notation}

\noindent \emph{Sets and topological spaces}.

\begin{itemize}
    \item For a set $X$, we shall denote by $\sharp X$ the \emph{cardinality} of $X$.
    \item For a topological space $X$ and a subset $Y \subseteq X$, we shall denote by $\overline{Y}$ the \emph{closure} of $Y$ in $X$.
\end{itemize}

\noindent \emph{Groups}.

\begin{itemize}
    \item For a group $G$ and a set $X$ on which $G$ acts (on the left), we shall denote by $X^G$ the \emph{subset of $G$-invariant elements} of $X$.
\item For a group $G$ and a subset $S \subseteq G$, we shall denote by $Z_G (S)$ the \emph{centralizer} of $S$ in $G$, and write $Z (G) \coloneqq Z_G (G)$ for the \emph{center} of $G$.
    We shall say that $G$ is \emph{center-free} if $Z (G)$ is trivial.
    \item For a group $G$, we shall denote by $\widehat{G}$ (or $G^\wedge$) the \emph{profinite completion} of $G$.
    For a group homomorphism $\alpha \colon G_1 \to G_2$, we shall denote by $\widehat{\alpha}$ (or $\alpha^\wedge$) the canonical homomorphism $\widehat{G_1} \to \widehat{G_2}$ induced by $\alpha$.
\end{itemize}

\noindent \emph{Rings and modules}.

\begin{itemize}
    \item
For a ring $A$, we denote by $A_+$ (resp. $A^\times$) the \emph{additive (resp. multiplicative) group} of $A$.
    \item For an abelian group $M$ and an integer $n$, we shall denote by $M_\tor$ (resp. $M [n]$) the \emph{torsion (resp. $n$-torsion) subgroup} of $M$.
    We shall write $M_{/ \tor}$ for the module $M / M_\tor$.
\item For a field $k$, we shall denote by $\mu_n (k) = k^\times [n]$ the \emph{group of $n$\textsuperscript{th} roots of unity} in $k$.
    \item For a field $k$, we shall fix an \emph{algebraic closure} $k^\alg$ of $k$, and denote by $k^\sep \subseteq k^\alg$ the \emph{separable closure}.
\end{itemize}

\subsection*{Acknowledgement}

The author would like to express his sincere gratitude to Prof.~Yuichiro Taguchi for suggesting the topic itself and providing several perspectives through discussions.
He also thanks Naganori Yamaguchi for leaving detailed comments and valuable feedback---which eventually led to a substantial improvement regarding the uniqueness assertion of \cref{theorem:2025.10.15.11.32.25}---after carefully reading the draft.
Furthermore, the author thanks Prof.~Akio Tamagawa, Prof.~Yuichiro Hoshi, Takahiro Murotani, Kaiji Kondo, Takuya Tanaka, Shota Tsujimura and an anonymous referee for their useful suggestions and advice.

\section{Preliminary notions}
\label{section:2025.10.15.11.32.38}

\noindent \emph{Filtered profinite groups}.
Let $G$ be a profinite group.
We call a family $\left\{ G (v) \right\}_{v\in [0, +\infty)}$ of closed normal subgroups of $G$ a \emph{filtration} of $G$, if $G (v_1) \supseteq G (v_2)$ for any $v_1, v_2$ with $v_1 \leq v_2$.
We say that $G$ is a \emph{filtered profinite group} if a filtration is attached to it.

Let $G_\circ, G_\bullet$ be filtered profinite groups.
We shall say that an isomorphism $\alpha \colon G_\circ \to G_\bullet$ (of profinite groups) is an isomorphism of \emph{filtered} profinite groups if
\begin{equation*}
\alpha (G_\circ (v))
    =
        G_\bullet (v)
\end{equation*}
for all $v$; we denote by $\IsomFilt (G_\circ, G_\bullet)$ the set of isomorphisms of filtered profinite groups from $G_\circ$ into $G_\bullet$.
Note that the group $\Inn (G_\bullet)$ of \emph{inner automorphisms} of $G_\bullet$ acts on $\IsomFilt (G_\circ, G_\bullet)$, since $G_\bullet (v)$ is a normal subgroup of $G_\bullet$ for each $v$.
Hence we can define the set of \emph{filtration-preserving outer isomorphisms} from $G_\circ$ to $G_\bullet$:
\begin{equation*}
    \OutFilt (G_\circ, G_\bullet)
    \coloneqq
        \Inn (G_\bullet)
        \backslash
        \IsomFilt (G_\circ, G_\bullet).
\end{equation*}

\vspace{1em}
\noindent \emph{Ramification groups}.
For a mixed-characteristic local field $K$ and any Galois extension $F / K$ contained in $K^\alg$, the Galois group $G = \Gal (F / K)$ is a profinite group equipped with the filtration defined by the \emph{upper-numbering ramification groups} (cf. \cite[Chap. IV, \S3]{Serre1979}); we denote by $G (v)$ the $v$\textsuperscript{th} ramification group for a real number $v\geq 0$.
This filtration is compatible with quotients in the following sense: if $N$ is a closed normal subgroup of $G$, then
\begin{equation}
    \label{equation:2025.10.15.11.32.46}
(G / N) (v)
    =
        G (v) N / N
\end{equation}
for all $v\geq 0$ (cf. \emph{loc. cit.}).
Therefore, given a fundamental system $\mathscr{N}$ of neighborhoods of the identity element consisting of open normal subgroups of $G$, we have a natural isomorphism
\begin{equation}
    \label{equation:2025.10.15.11.32.55}
G (v)
    \xrightarrow{\cong}
        \varprojlim_{N \in \mathscr{N}} (G / N) (v)
\end{equation}
of profinite groups.
Note that
\begin{align*}
G (0)
    &=
        \Gal (F / (F \cap K^\un)), \\
\overline{\bigcup_{v > 0} G (v)}
    &=
        \Gal (F / (F \cap K^\tame)),
\end{align*}
where $K^\nr$ (resp. $K^\tame$) is the \emph{maximal unramified extension} (resp. \emph{maximal tamely ramified extension}) of $K$ in $K^\alg$, i.e., $G (0)$ (resp. $G (0+) \coloneqq \overline{\bigcup_{v > 0} G (v)}$) is precisely the \emph{inertia group} (resp. \emph{wild inertia group}) of $G$. (See also \emph{loc. cit.}, Exercise 1.)

Suppose that $L / K$ is a finite Galois subextension of $F / K$.
We set $H \coloneqq \Gal (F / L)$, so that $G / H = \Gal (L / K)$.
We define the function $\phi = \phi_{L / K} \colon [0, +\infty)\to [0, +\infty)$ as the inverse function of
\begin{equation*}
\psi (v)
    =
        \psi_{L / K} (v)
        \coloneqq
            \int_0^v \left(
                (G / H) : (G / H) (w)
            \right) \, dw.
\end{equation*}
It is clear from \eqref{equation:2025.10.15.11.32.46} that $\phi$ and $\psi$ are determined by the groups $H$, $G$, and $G (v)$ for $v \geq 0$.
Suppose that $N$ is an open subgroup in $H$, and that $N \unlhd G$.
One can easily verify that
\begin{equation*}
(H / N) (w)
    =
        (H / N) \cap (G / N) (\phi (w))
\end{equation*}
for all $w \geq 0$ (from, e.g., \emph{loc. cit.}, Proposition 15), and derive the following lemma from \eqref{equation:2025.10.15.11.32.55}.

\begin{lemma}
    \label{lemma:2025.10.15.11.33.03}
    For a real number $w \geq 0$, the $w$\textsuperscript{th} ramification group $H (w)$ of $H$ is determined by the groups $H$, $G$, and $G (v)$ for $v \geq 0$: we have
    \begin{equation*}
H (w)
        =
            \varprojlim_N \left\{
                (H / N) \cap (G / N) (\phi (w))
            \right\}
    \end{equation*}
    as a subset of $H = \varprojlim (H / N)$, where $N$ runs through the open subgroups of $H$ such that $N \unlhd G$.
\end{lemma}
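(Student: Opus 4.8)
The plan is to read off the formula by combining two facts already in hand: the projective-limit description of ramification groups furnished by \eqref{equation:2025.10.15.11.32.55}, and the level-wise Herbrand-type identity $(H / N)(w) = (H / N) \cap (G / N)(\phi(w))$ recorded just above, which holds for every open $N \unlhd G$ with $N \subseteq H$. Since $H = \Gal(F / L)$ is itself a profinite group carrying the upper-numbering filtration of $F / L$, applying \eqref{equation:2025.10.15.11.32.55} with $H$ in place of $G$ yields a natural isomorphism $H(w) \xrightarrow{\cong} \varprojlim_N (H / N)(w)$, where $N$ ranges over any fundamental system of open normal subgroups of $H$.

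First I would check that the subgroups $N$ appearing in the statement---open subgroups of $H$ that are moreover normal in $G$---constitute such a fundamental system. Because $L / K$ is finite, $H$ is open in $G$; hence the open normal subgroups of $G$ contained in $H$ are cofinal among all open subgroups of $H$, and each of them is automatically normal in $H$. Thus I may compute the limit above over exactly this cofinal family without changing its value, and every $N$ in the family satisfies the hypotheses under which the level-wise identity was derived.

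Next I would substitute the level-wise identity term by term. The decisive point is that the function $\phi = \phi_{L / K}$, and hence the real number $\phi(w)$, depends only on the finite extension $L / K$---equivalently, as already noted, only on $H$, $G$, and the filtration $\left\{ G(v) \right\}$---and is therefore the \emph{same} for every $N$ in the system. Consequently the passage into the projective limit is clean, and I obtain
\[
H(w) = \varprojlim_N \left\{ (H / N) \cap (G / N)(\phi(w)) \right\}
\]
as a subset of $H = \varprojlim_N (H / N)$. Finally, the right-hand side is manifestly assembled only from $H$, $G$, and $\left\{ G(v) \right\}$: the quotients $H / N$ and $G / N$ and the inclusion $(H / N) \hookrightarrow (G / N)$ come from the groups themselves, while $(G / N)(\phi(w)) = G(\phi(w)) N / N$ by \eqref{equation:2025.10.15.11.32.46} and $\phi(w)$ is determined by the indicated data. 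The only genuinely delicate step is the cofinality claim, which guarantees that restricting to the subfamily $\left\{ N : N \unlhd G \right\}$ leaves the limit unchanged; everything else is a direct substitution of quantities that are independent of $N$.
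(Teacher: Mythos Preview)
Your proposal is correct and follows precisely the route the paper indicates: combine the level-wise identity $(H/N)(w) = (H/N)\cap (G/N)(\phi(w))$ with the projective-limit description \eqref{equation:2025.10.15.11.32.55} applied to $H$. Your explicit verification that the open subgroups of $H$ normal in $G$ form a cofinal (hence fundamental) system is a welcome elaboration of a point the paper leaves implicit.
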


\vspace{1em}
\noindent \emph{Solvable quotients of profinite groups}.
\label{2025.10.15.11.33.37}
For a profinite group $G$, we denote by $\overline{[G, G]}$ the closed subgroup generated by the \emph{commutators} of $G$, i.e., the elements of the form $\sigma \tau \sigma^{-1} \tau^{-1}$, where $\sigma, \tau \in G$.
We inductively define the decreasing sequence
\begin{equation*}
G
    =
        G^{[0]}
    \supseteq
        G^{[1]}
    \supseteq
        \cdots
    \supseteq
        G^{[m]}
    \supseteq
        \cdots
\end{equation*}
of closed normal subgroups of $G$, by $G^{[m + 1]} = \overline{[G^{[m]}, G^{[m]}]}$.
Note that $G^{[m]}$ are \emph{characteristic subgroups} of $G$, i.e., every automorphism of $G$ restricts to an automorphism of $G^{[m]}$.
We say that a profinite group $G$ is \emph{$m$-step solvable} (resp. \emph{abelian}) if $G^{[m]}$ (resp. $G^{[1]}$) is trivial.
We denote by $G^m$ the quotient $G / G^{[m]}$, and call it the \emph{maximal $m$-step solvable quotient} of $G$.
We will often write $G^\ab$ instead of $G^1$, and call it the \emph{maximal abelian quotient} or \emph{abelianization} of $G$.

For a field $k$, we shall denote by $k^m$ (resp. $k^\ab$) the subextension of $k^\sep / k$ fixed by $G_k^{[m]}$ (resp. $G_k^{[1]}$), and call it the \emph{maximal {$m$-step solvable} (resp. {abelian}) extension} of $k$.
In particular, we have
\begin{equation*}
G_k^m
    =
        \Gal (k^m / k), \quad
G_k^\ab
    =
        \Gal (k^\ab / k).
\end{equation*}

\begin{definition}
    Let $m$ be an integer $\geq 0$, and let $G$ be a profinite group.
    We shall say that $G$ is a \emph{profinite group of ${\MLF}$- (resp. ${\MLF}^{m}$-, resp. ${\MLF}^\ab$-) type} if there exists an isomorphism of profinite groups between $G$ and $G_K$ (resp. $G_K^{m}$, resp. $G_K^\ab$), for some mixed-characteristic local field $K$.
    We define \emph{filtered profinite groups of ${\MLF}$- (resp. ${\MLF}^{m}$-, resp. ${\MLF}^\ab$-) type} in a similar way.
\end{definition}

We prove the following lemma for later use.

\begin{lemma}
    \label{lemma:2025.10.15.11.33.30}
    Let $m, n$ be integers $\geq 0$.
    \begin{enumerate}
        \item Let $\varGamma$ be a profinite group, $H$ an open subgroup of $\varGamma^{m + n}$ containing
        \begin{equation*}
(\varGamma^{m + n})^{[m]}
            =
                \Ker (
\varGamma^{m + n}
                    \twoheadrightarrow
                        \varGamma^{m}
                )
            =
                \varGamma^{[m]} / \varGamma^{[m + n]}.
        \end{equation*}
        If we denote by $\tilde{H}$ the inverse image of $H$ under the natural surjection $\varGamma \twoheadrightarrow \varGamma^{m + n}$, then the natural surjection $\tilde{H}^{n} \twoheadrightarrow H^{n}$ is injective.
        \item Let $k$ be a field.
        For a finite extension $l / k$, we have
        \begin{equation*}
G_l^n
            =
                \Gal(k^{m + n} / l)^n
        \end{equation*}
        if $l$ is contained in $k^m$.
        In particular, if $G$ is a profinite group of $\MLF^{m + n}$-type (i.e., $G = \varGamma^{m + n}$ for some profinite group $\varGamma$ of $\MLF$-type), and $H$ is an open subgroup of $G$ containing $G^{[m]}$, then $H^n$ is a profinite group of $\MLF^n$-type.
    \end{enumerate}
\end{lemma}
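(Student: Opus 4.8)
The plan is to prove part (1) as a purely group-theoretic statement about derived series, and then to read off part (2) by identifying the relevant preimage in $G_k$ with an absolute Galois group via Galois theory.

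For part (1), let $q\colon \varGamma \twoheadrightarrow \varGamma^{m+n}$ be the natural surjection, so that $\Ker q = \varGamma^{[m+n]}$, and let $\pi\colon \tilde H \twoheadrightarrow H$ be its restriction to $\tilde H = q^{-1}(H)$; then $\pi$ is surjective with $\Ker \pi = \varGamma^{[m+n]}$ (as $\varGamma^{[m+n]} = \Ker q \subseteq q^{-1}(H)$). Since $\pi$ is a continuous surjection of profinite groups, it is a closed map, and a standard argument gives $\pi(\tilde H^{[n]}) = H^{[n]}$; hence $\pi$ descends to the stated surjection $\bar\pi\colon \tilde H^{n} \twoheadrightarrow H^{n}$. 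A direct computation then shows $\pi^{-1}(H^{[n]}) = \tilde H^{[n]}\,\varGamma^{[m+n]}$, so that $\Ker \bar\pi \cong \varGamma^{[m+n]}/(\varGamma^{[m+n]} \cap \tilde H^{[n]})$. Thus injectivity of $\bar\pi$ is equivalent to the single inclusion $\varGamma^{[m+n]} \subseteq \tilde H^{[n]}$.

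The step I would single out as the crux is the derived-series identity $(\varGamma^{[m]})^{[n]} = \varGamma^{[m+n]}$, which follows by induction on $n$ from the defining recursion $A^{[j+1]} = \overline{[A^{[j]}, A^{[j]}]}$ applied to $A = \varGamma^{[m]}$. Combined with monotonicity of the derived series (if $A \subseteq B$ then $A^{[n]} \subseteq B^{[n]}$, since both the commutator operation and closure are monotone) and the hypothesis that $\tilde H \supseteq \varGamma^{[m]}$ (which holds because $H \supseteq (\varGamma^{m+n})^{[m]} = \varGamma^{[m]}/\varGamma^{[m+n]}$ forces $\tilde H = q^{-1}(H) \supseteq \varGamma^{[m]}$), this yields $\tilde H^{[n]} \supseteq (\varGamma^{[m]})^{[n]} = \varGamma^{[m+n]}$, which is exactly the inclusion required. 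Hence $\bar\pi$ is an isomorphism.

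For part (2), I would take $\varGamma = G_k$ and $H = \Gal(k^{m+n}/l)$. Since $l \subseteq k^m$, this $H$ is open in $G_k^{m+n}$ and contains $\Gal(k^{m+n}/k^m) = (G_k^{m+n})^{[m]}$, so part (1) applies once the preimage $\tilde H$ of $H$ in $G_k$ is identified; but that preimage is precisely $\{\sigma \in G_k : \sigma\vert_l = \id\} = G_l$, whence $\tilde H^{n} = G_l^{n}$, and part (1) gives the asserted equality $G_l^{n} = \Gal(k^{m+n}/l)^{n}$. For the final assertion, write $G = \varGamma^{m+n}$ with $\varGamma = G_K$ and let $L$ be the fixed field of the given $H$ in $K^{m+n}$; the hypotheses force $L$ to be a finite extension of $K$ contained in $K^m$, hence itself a mixed-characteristic local field, so the equality just proved gives $H^{n} = G_L^{n}$, a profinite group of $\MLF^{n}$-type. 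I anticipate no serious obstacle beyond bookkeeping; the one place demanding care is the interplay of closures with the derived-series operations in the profinite setting—namely that $\pi(\tilde H^{[n]}) = H^{[n]}$ and that the kernel computation remains valid for closed subgroups—but these are guaranteed by compactness.
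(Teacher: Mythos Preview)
Your proof is correct and follows essentially the same approach as the paper: both reduce part~(1) to the inclusion $\varGamma^{[m+n]} \subseteq \tilde H^{[n]}$, obtained from $\tilde H \supseteq \varGamma^{[m]}$ via monotonicity and the identity $(\varGamma^{[m]})^{[n]} = \varGamma^{[m+n]}$, and both deduce part~(2) by specializing to $\varGamma = G_k$, $H = \Gal(k^{m+n}/l)$. The paper's version is simply more terse---it writes the conclusion as $\tilde H / \tilde H^{[n]}\varGamma^{[m+n]} \cong H^n$ and then notes $\tilde H^{[n]} \supseteq \varGamma^{[m+n]}$---whereas you spell out the kernel computation and the derived-series induction explicitly.
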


\begin{proof}\hfill
    \begin{enumerate}
        \item Since the natural surjection $\tilde{H} \twoheadrightarrow H = \tilde{H} / \varGamma^{[m + n]}$ induces an isomorphism
        \begin{equation*}
\tilde{H}^{[n]} \varGamma^{[m + n]}
                /
                \varGamma^{[m + n]}
            =
                \tilde{H}^{[n]}
                /
                (\tilde{H}^{[n]} \cap \varGamma^{[m + n]})
            \xrightarrow{\cong}
                (\tilde{H} / \varGamma^{[m + n]})^{[n]},
        \end{equation*}
        we have a natural isomorphism $\tilde{H} / \tilde{H}^{[n]} \varGamma^{[m + n]} \to H^{n}$.
        It follows from the hypothesis that $\tilde{H} \supseteq \varGamma^{[m]}$ (and that $\tilde{H}^{[n]} \supseteq \varGamma^{[m + n]}$), and hence the assertion holds.
        \item Apply (1) to the case $\varGamma = G_k$, $H = \Gal (k^{m + n} / l)$.
    \end{enumerate}
\end{proof}

\begin{remark}\hfill
    \begin{enumerate}
        \item If $G$ is a profinite group of $\MLF$-type, $G$ is \emph{prosolvable} \cite[Chap. IV, Corollary 5 of Proposition 7]{Serre1979}; hence
        \begin{equation*}
            \bigcap_{m \geq 0} G^{[m]} = \{ 1 \}.
        \end{equation*}
        However, $G$ itself is \emph{not solvable}, i.e., $G^{[m]} \neq \{ 1 \}$ for every $m \geq 0$.
        This can be seen from the fact that, for every prime number $p$, the wild inertia group of $G_{\QQ_p}$ is isomorphic to a free pro-$p$ group of countably infinite rank \cite[Proposition 7.5.1]{NeukirchSchmidtWingberg2008}, which is not solvable.
        Therefore, the sequence $\left\{ G^{[m]} \right\}_{m \geq 0}$ is strictly decreasing.
        \item Let $G$ be a profinite group of $\MLF^m$-type for some integer $m \geq 0$.
        If we denote by $m (G)$ the minimal integer $n$ such that $G^{[n]} = \{ 1 \}$, then $m = m (G)$.
        In other words, $m (G)$---which is group-theoretically determined from the profinite group $G$---is the only integer $m \geq 0$ for which $G$ is a profinite group of $\MLF^m$-type: assume that $G \cong G_K^m$ for some mixed-characteristic local field $K$ and an integer $m$.
        Then obviously $G^{[m]} = \{ 1 \}$, and it is clear from (1) that $G^{[n]} \neq \{ 1 \}$ if $n < m$. Thus $m (G)$ equals $m$ by definition.
    \end{enumerate}
\end{remark}

\section{Restoration of the cyclotomic character}
\label{section:2025.10.15.11.33.53}

Let $K$ be a mixed-characteristic local field, with ring of integers $\mathscr{O}_K$, maximal ideal $\mathfrak{p}_K$ and residue field $\mathfrak{k}_K$.
In the current section, we show that the following invariants of $K$ can be recovered \emph{group-theoretically}, requiring at most the \emph{maximal $2$-step solvable quotient} $G_K^2$ of $G_K$:
\begin{itemize}
    \item $p_K$, the \emph{residue characteristic} of $K$, i.e., the characteristic of $\mathfrak{k}_K$;
    \item $\varepsilon_K$, the \emph{parity index} of $p_K$ (equal to $1$ if $p_K$ is odd, and $2$ if $p_K$ is even);
    \item $a_K$, the largest integer $\geq 0$ such that $K$ contains a $(p_K^{a_K})$\textsuperscript{th} root of unity;
    \item $d_K$, the \emph{absolute degree} $[K : \QQ_{p_K}]$ of $K$;
    \item $e_K$, the \emph{absolute ramification index} of $K$, i.e., the integer satisfying $p_K \mathscr{O}_K = \mathfrak{p}_K^{e_K}$;
    \item $f_K$, the \emph{absolute inertia degree} $[\mathfrak{k}_K : \FF_{p_K}]$ of $K$;
    \item $\chi_{\cyc, K} \colon G_K \to \widehat{\ZZ}^\times = \Aut (\varprojlim \mu_n (K^\alg))$, the \emph{cyclotomic character} of $K$;
    \item $\chi_K \colon G_K \to \ZZ_{p_K}^\times$, the \emph{$p_K$-adic cyclotomic character} of $K$.
\end{itemize}

\subsection*{Local class field theory; its application}

Suppose that $G$ is a profinite group of $\MLF^\ab$-type, i.e., there exists an isomorphism $G \to G_K^\ab = \Gal (K^\ab / K)$ of profinite groups for some mixed-characteristic local field $K$.
We first observe the structure of the group $K^\times$.
Let $\pi \in K^\times$ be a \emph{uniformizer} of $K$, i.e., an element such that $\mathfrak{p}_K = \pi \mathscr{O}_K$.
Then we have the isomorphisms of topological groups
\begin{align*}
K^\times
    &=
        U_K \cdot \pi^{\ZZ_+}
    \xrightarrow{\cong}
        U_K \oplus \ZZ_+, \\
U_K
    &=
        \mu_{\sharp (\mathfrak{k}_K) - 1} (K) \cdot U_K (1)
    \xrightarrow{\cong}
(\ZZ / (p_K^{f_K} - 1) \ZZ)_+
        \oplus
            (\ZZ / p_K^{a_K} \ZZ)_+
        \oplus
            ((\ZZ_{p_K})_+)^{\oplus d_K},
\end{align*}
where $U_K = U_K (0)$ (resp. $U_K (n)$) is the \emph{unit group} $\mathscr{O}_K^\times$ (resp. \emph{$n$\textsuperscript{th} higher unit group} $1 + \mathfrak{p}_K^n$) of $\mathscr{O}_K$ (cf. \cites[Chap. II, \S2]{Iwasawa1986}[Chap. II, \S5]{Neukirch1999}).
We recall from \emph{local class field theory} (cf., e.g., \cite{FesenkoVostokov1993,Iwasawa1986,MilneCFT,Neukirch1999,Serre1976,Serre1979,Yoshida2008}) that the \emph{local reciprocity map} (or \emph{local Artin map}) $\Art_K \colon K^\times \to G_K^\ab$ fits into the following commutative diagram (in which the rows are exact and split)
\begin{equation*}
    \begin{tikzcd}
        1 & {U_K} & {K^\times} & {\ZZ_+} & 1 \\
        1 & {G_K^\ab(0)} & {G_K^\ab} & {\Gal(K^\nr/K)} & 1
        \arrow[from=1-1, to=1-2]
        \arrow[from=1-2, to=1-3]
        \arrow["{\cong, \Art_K\vert_{U_K}}", from=1-2, to=2-2]
        \arrow["{{{\ord_K}}}", from=1-3, to=1-4]
        \arrow["{{{\Art_K}}}", from=1-3, to=2-3]
        \arrow[from=1-4, to=1-5]
        \arrow["{{{\Frob_K^{(-)}}}}", from=1-4, to=2-4]
        \arrow[from=2-1, to=2-2]
        \arrow[from=2-2, to=2-3]
        \arrow[from=2-3, to=2-4]
        \arrow[from=2-4, to=2-5]
    \end{tikzcd},
\end{equation*}
where $\ord_K$ is the \emph{normalized discrete valuation} on $K$ and $\Frob_K$ is the \emph{arithmetic Frobenius} of $K$.
Hence it yields an isomorphism of profinite groups
\label{2025.10.15.11.34.04}
\begin{equation*}
\widehat{K^\times} \, (
            \cong
                U_K \oplus \widehat{\ZZ}_+
        ) \,\,\,
    \xrightarrow{\cong} \,\,\,
        G_K^\ab \, (
            \cong
                G_K^\ab (0) \oplus \Gal (K^\nr / K)
        ),
\end{equation*}
by profinite completion.
In particular, we have
\begin{equation}
    \label{equation:2025.10.15.11.34.10}
G
    \cong
        G_K^\ab
    \cong
(\ZZ / (p_K^{f_K} - 1) \ZZ)_+
        \oplus
            (\ZZ / p_K^{a_K} \ZZ)_+
        \oplus
            ((\ZZ_{p_K})_+)^{\oplus d_K}
        \oplus
            \widehat{\ZZ}_+
\end{equation}
as profinite groups.
We denote by $p (G)$ the uniquely determined prime number $\ell$ such that
\begin{equation*}
\log_\ell \sharp (G_{/ \tor} / \ell \cdot G_{/ \tor})
    \geq
        2.
\end{equation*}
Furthermore, we set:
\begin{itemize}
    \item $\varepsilon (G) \coloneqq 1$ (resp. $\varepsilon (G) \coloneqq 2$) if $p (G)$ is odd (resp. even);
    \item $a (G) \coloneqq \log_{p (G)} \sharp ((G_{\tor})^{(p (G))})$;
    \item $d (G) \coloneqq \log_{p (G)} \sharp (G_{/ \tor} / p (G) \cdot G_{/ \tor}) - 1$;
    \item $f (G) \coloneqq \log_{p (G)}(\sharp ((G_\tor)^{(p (G)')}) + 1)$;
    \item $e (G) \coloneqq d (G) / f (G)$,
\end{itemize}
where $(G_{\tor})^{(p (G))}$ (resp. $(G_{\tor})^{(p (G)')}$) denotes the \emph{$p (G)$-Sylow subgroup of $G_\tor$} (resp. the \emph{quotient of $G_\tor$ by its $p (G)$-Sylow subgroup}).

\begin{proposition}
    \label{proposition:2025.10.15.11.34.21}
    Let $K$ be a mixed-characteristic local field.
    Then we have
    \begin{equation*}
p_K
        =
            p (G_K^\ab), \quad
\varepsilon_K
        =
            \varepsilon (G_K^\ab), \quad
a_K
        =
            a (G_K^\ab),
    \end{equation*}
    \begin{equation*}
d_K
        =
            d (G_K^\ab), \quad
e_K
        =
            e (G_K^\ab), \quad
f_K
        =
            f (G_K^\ab).
    \end{equation*}
\end{proposition}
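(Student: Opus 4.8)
The plan is to read every invariant directly off the explicit isomorphism \eqref{equation:2025.10.15.11.34.10}, which already presents $G = G_K^\ab$ as a concrete direct sum. Writing $p = p_K$, $f = f_K$, $a = a_K$, $d = d_K$ for brevity, I would first separate the torsion and torsion-free parts. Since $\ZZ_p$ and each $\ZZ_\ell$ (hence $\widehat{\ZZ}$) are torsion-free, the torsion subgroup comes entirely from the two finite cyclic summands,
\begin{equation*}
G_\tor \cong (\ZZ/(p^f - 1)\ZZ)_+ \oplus (\ZZ/p^a\ZZ)_+,
\end{equation*}
and, because $\gcd(p^f - 1, p) = 1$, its $p$-Sylow subgroup is $(\ZZ/p^a\ZZ)_+$ while its prime-to-$p$ quotient is $(\ZZ/(p^f-1)\ZZ)_+$. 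Dually, the torsion-free quotient is
\begin{equation*}
G_{/\tor} \cong ((\ZZ_p)_+)^{\oplus d} \oplus \widehat{\ZZ}_+ \cong ((\ZZ_p)_+)^{\oplus(d+1)} \oplus \bigoplus_{\ell \neq p} (\ZZ_\ell)_+,
\end{equation*}
using $\widehat{\ZZ} = \prod_\ell \ZZ_\ell$.

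The one genuinely load-bearing step is the identification $p(G) = p_K$, since every subsequent formula is stated in terms of $p(G)$. For this I would compute $G_{/\tor}/\ell\cdot G_{/\tor}$ for each prime $\ell$: reduction modulo $p$ kills every $\ZZ_{\ell'}$ with $\ell' \neq p$ and leaves $(\FF_p)^{\oplus(d+1)}$, whereas reduction modulo a prime $\ell \neq p$ leaves only the single factor $\FF_\ell$. Hence $\log_p \sharp(G_{/\tor}/p\cdot G_{/\tor}) = d + 1 \geq 2$---here one uses $d = [K : \QQ_p] \geq 1$---while $\log_\ell \sharp(G_{/\tor}/\ell\cdot G_{/\tor}) = 1$ for $\ell \neq p$. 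Thus $p$ is the unique prime satisfying the defining inequality, so $p(G) = p_K$, and then $\varepsilon(G) = \varepsilon_K$ is immediate from the parity of $p$.

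With $p(G) = p_K$ in hand, the remaining invariants fall out by direct substitution. From the torsion computation, $\sharp((G_\tor)^{(p)}) = p^a$ gives $a(G) = a_K$, and $\sharp((G_\tor)^{(p')}) = p^f - 1$ gives $f(G) = \log_p\!\big((p^f - 1) + 1\big) = f_K$; from the mod-$p$ computation above, $d(G) = (d+1) - 1 = d_K$; and finally $e(G) = d(G)/f(G) = d_K/f_K = e_K$ by the fundamental identity $d_K = e_K f_K$. I expect no real obstacle beyond bookkeeping: the only subtlety is noticing that the copy of $\ZZ_p$ hidden inside $\widehat{\ZZ}$ merges with the $d$ visible copies to produce the rank $d+1$ that distinguishes $p$ from all other primes, and correspondingly that the ``$+1$'' in the definition of $f(G)$ is precisely what converts $p^f - 1$ back into a power of $p$.
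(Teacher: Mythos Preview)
Your proposal is correct and follows essentially the same approach as the paper's own proof, which likewise reads off each invariant from the explicit decomposition \eqref{equation:2025.10.15.11.34.10}; your version is simply more explicit about the torsion/torsion-free splitting and the mod-$\ell$ computation. One minor notational slip: $\widehat{\ZZ}$ decomposes as the \emph{product} $\prod_\ell \ZZ_\ell$ rather than the direct sum $\bigoplus_{\ell} \ZZ_\ell$, but since quotienting by $\ell$ commutes with products and kills every $\ZZ_{\ell'}$-factor with $\ell' \neq \ell$, your computation of $G_{/\tor}/\ell\cdot G_{/\tor}$ is unaffected.
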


Intuitively speaking, $p_K$, $\varepsilon_K$, $a_K$, $d_K$, $e_K$, and $f_K$ can be recovered entirely group-theoretically from the profinite group $G_K^\ab$.

\begin{proof}
    It follows from \eqref{equation:2025.10.15.11.34.10} that $p_K$ is the only prime number $\ell$ such that
    \begin{equation*}
\log_\ell \sharp ((G_K^\ab)_{/ \tor} / \ell \cdot (G_K^\ab)_{/ \tor})
        \geq
            2.
    \end{equation*}
    Hence $p_K = p (G_K^\ab)$, $\varepsilon_K = \varepsilon (G_K^\ab)$ and
    \begin{equation*}
d (G_K^\ab)
        =
            \log_{p_K} \sharp ((G_K^\ab)_{/ \tor} / p_K \cdot (G_K^\ab)_{/ \tor}) - 1
        =
            d_K.
    \end{equation*}
    We also see from \eqref{equation:2025.10.15.11.34.10} that the pro-prime-to-$p_K$ (resp. pro-$p_K$) part of $(G_K^\ab)_\tor$ has exactly $p_K^{f_K} - 1$ (resp. $p_K^{a_K}$) elements.
    Therefore, we obtain the third, fifth and sixth equalities.
\end{proof}

\subsection*{Restoration of the cyclotomic character}

Next, we give a reconstruction algorithm which takes as input a profinite group of $\MLF^2$-type, say, $G$, and returns (the isomorphism class of) a $\ZZ_{\ell}$-representation of $G$ of rank $1$, for each prime number $\ell$.
Suppose that there exists an isomorphism $\alpha \colon G \to G_K^2$ of profinite groups for a mixed-characteristic local field $K$.
We start by choosing a decreasing sequence
\begin{equation*}
G
    =
        H_0
    \supseteq
        H_1
    \supseteq
        \cdots
    \supseteq
        H_\nu
    \supseteq
        \cdots
\end{equation*}
of open normal subgroups of $G$ such that, for each $\nu \in \ZZ_{\geq 0}$,
\begin{enumerate}[label=(\roman*)]
    \item $H_\nu^\ab [\ell^\nu] \cong (\ZZ / \ell^\nu \ZZ)_+$;
    \item $G / H_\nu$ is abelian.
\end{enumerate}
(Note that $G$ acts on $H_\nu^\ab[\ell^\nu]$ by conjugation.)
Such a sequence $\left\{ H_\nu \right\}_{\nu}$ exists: we can choose $H_\nu = \alpha^{-1}(\Gal (K^2 / K (\zeta_{\ell^\nu})))$, where $\zeta_{\ell^\nu}$ is a primitive $(\ell^\nu)$\textsuperscript{th} root of unity.
It follows immediately that $\left\{ H_\nu \right\}_{\nu}$ satisfies the condition (ii).
We can also verify that $\left\{ H_\nu \right\}_{\nu}$ satisfies the condition (i), using the local reciprocity map
\begin{equation*}
    \Art_{K (\zeta_{\ell^\nu})}
    \colon
        K (\zeta_{\ell^\nu})^\times
    \to
        G_{K (\zeta_{\ell^\nu})}^\ab
\end{equation*}
and the fact that
\begin{equation*}
H_\nu^\ab
    \cong
        \Gal (K^2 / K (\zeta_{\ell^\nu}))^\ab
    =
        G_{K (\zeta_{\ell^\nu})}^\ab,
\end{equation*}
which follows from \cref{lemma:2025.10.15.11.33.30}.

We know from local class field theory that if $L_\square \subseteq K^\ab$ is the field fixed by $\alpha (H_\square)$ for each $\square \in \left\{ \nu, \nu + 1 \right\}$, then the diagram
\begin{equation*}
    \begin{tikzcd}[column sep=3.5em]
        {H_\nu^\ab} & {\Gal(K^2/L_\nu)^\ab = G_{L_\nu}^\ab} & {L_\nu^\times} \\
        {H_{\nu+1}^\ab} & {\Gal(K^2/L_{\nu+1})^\ab = G_{L_{\nu+1}}^\ab} & {L_{\nu+1}^\times}
        \arrow["{\cong,\alpha_\nu}", from=1-1, to=1-2]
        \arrow["\Ver", from=1-1, to=2-1]
        \arrow["\Ver", from=1-2, to=2-2]
        \arrow["{\Art_{L_{\nu}}}"', from=1-3, to=1-2]
        \arrow["\subseteq", from=1-3, to=2-3]
        \arrow["{\cong,\alpha_{\nu+1}}", from=2-1, to=2-2]
        \arrow["{\Art_{L_{\nu+1}}}"', from=2-3, to=2-2]
    \end{tikzcd}
\end{equation*}
commutes, where $\Ver$ is the \emph{transfer} map (cf., e.g., \cites[Chap. VII, \S8]{Serre1979}[\S6.7]{Weibel1994}) and $\alpha_\square$ is the isomorphism of profinite groups induced by $\alpha$.
Moreover, $\Art_{L_\square}$ restricts to an isomorphism $U_{L_\square} \to G_{L_\square}^\ab (0)$, and hence to an isomorphism $\mu_{\ell^{\square}} (L_{\square}) \to G_{L_\square}^\ab[\ell^\square]$.
Therefore, $\Ver$ restricts to an injective homomorphism $H_{\nu}^\ab[\ell^\nu] \to H_{\nu+1}^\ab[\ell^{\nu+1}]$;
we identify $H_\nu^\ab[\ell^\nu]$ with a subgroup of $H_{\nu+1}^\ab[\ell^{\nu+1}]$ via $\Ver$.
(We will see later that the transfer map $\Ver$ here is in fact an injective homomorphism---cf. \cref{lemma:2025.10.15.11.37.39} (2).)

We have the inverse system
\begin{equation*}
    \begin{tikzcd}
        \cdots & {H_{\nu+1}^\ab[\ell^{\nu+1}]} & {H_\nu^\ab[\ell^\nu]} & \cdots & {H_1^\ab[\ell]}
        \arrow["{{{(-)^\ell}}}", from=1-1, to=1-2]
        \arrow["{{{(-)^\ell}}}", from=1-2, to=1-3]
        \arrow["{{{(-)^\ell}}}", from=1-3, to=1-4]
        \arrow["{{{(-)^\ell}}}", from=1-4, to=1-5]
    \end{tikzcd}
\end{equation*}
of $G$-modules induced by the homomorphisms $H_{\nu + 1}^\ab \xrightarrow{(-)^\ell} H_{\nu + 1}^\ab$.
By passage to the limit, we obtain
\begin{equation*}
    T_\ell (G) \coloneqq \varprojlim_\nu H_\nu^\ab[\ell^\nu].
\end{equation*}
It will be implicitly shown in the proof of \cref{proposition:2025.10.15.11.34.35} that the isomorphism class of the $G$-module $H_\nu^\ab[\ell^\nu]$ for each $\nu$ (and hence the isomorphism class of $T_\ell (G)$) does not depend on the choice of $H_\nu$.
We shall write
\begin{equation*}
    \chi^{(\ell)} (G)
    \colon
        G
    \to
        \Aut (T_\ell (G)) \, (
            =
                \ZZ_\ell^\times
        )
\end{equation*}
for the $\ell$-adic character of $G$ attached to $T_\ell (G)$, and we define $\chi (G)$ as follows:
\begin{equation*}
    \chi (G)
    \coloneqq
        \chi^{(p (G^\ab))} (G).
\end{equation*}

\begin{proposition}
    \label{proposition:2025.10.15.11.34.35}
    Let $K$ be a mixed-characteristic local field.
    \begin{enumerate}
        \item For each prime number $\ell$, there exists an isomorphism
        \begin{equation*}
\ZZ_{\ell} (1)
            \xrightarrow{\cong}
                T_\ell (G_K^2)
        \end{equation*}
        of $G_K^2$-modules, where $\ZZ_{\ell} (1)$ denotes the \emph{first Tate twist} of the trivial $G_K^{2}$-module $\ZZ_{\ell}$.
        \item The cyclotomic character $\chi_K$ factors through $\chi (G_K^2)$.
    \end{enumerate}
\end{proposition}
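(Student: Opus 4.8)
The plan is to promote the group-theoretic identifications set up just before the statement to isomorphisms of $G$-modules, where $G \coloneqq G_K^2$, and then to pass to the projective limit. Recall that, writing $L_\nu = K(\zeta_{\ell^\nu})$, \cref{lemma:2025.10.15.11.33.30} identifies $H_\nu^\ab$ with $G_{L_\nu}^\ab$, while the local reciprocity map $\Art_{L_\nu}$ restricts to an isomorphism $\mu_{\ell^\nu}(L_\nu) \xrightarrow{\cong} H_\nu^\ab[\ell^\nu]$; since $L_\nu$ contains every $\ell^\nu$-th root of unity, $\mu_{\ell^\nu}(L_\nu) = \mu_{\ell^\nu}(K^\alg)$.

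First I would make the $G$-action explicit. The group $G$ acts on $H_\nu^\ab$ by conjugation, and since inner automorphisms act trivially on an abelianization, this action factors through $G / H_\nu = \Gal(L_\nu / K)$. The decisive input is the Galois-equivariance (functoriality) of the local reciprocity map: for any lift $\sigma \in G$ of an element of $\Gal(L_\nu / K)$, one has $\sigma \, \Art_{L_\nu}(x) \, \sigma^{-1} = \Art_{L_\nu}(\sigma(x))$ for all $x \in L_\nu^\times$. Hence $\Art_{L_\nu}$ intertwines the conjugation action on $H_\nu^\ab[\ell^\nu]$ with the natural Galois action on $\mu_{\ell^\nu}(K^\alg)$, i.e.\ the Tate-twist action $\sigma \cdot \zeta = \zeta^{\chi_{\cyc, K}(\sigma)}$. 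Consequently $H_\nu^\ab[\ell^\nu] \cong (\ZZ / \ell^\nu \ZZ)(1)$ as $G$-modules; in particular its isomorphism class is independent of the choice of $H_\nu$, as was anticipated in the text.

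It remains to match the transition maps and take the limit. Under $\Art_{L_\nu}$, the structure map $(-)^\ell$ of the inverse system corresponds to the $\ell$-th power map $\mu_{\ell^{\nu + 1}}(K^\alg) \to \mu_{\ell^\nu}(K^\alg)$, $\zeta \mapsto \zeta^\ell$ (compatibly with the identification of $\Ver$ with the inclusion $\mu_{\ell^\nu} \hookrightarrow \mu_{\ell^{\nu + 1}}$), which is precisely the structure map defining the Tate module of $\mu_{\ell^\infty}$. Passing to the limit thus yields a $G$-equivariant isomorphism
\[
    T_\ell(G_K^2)
        = \varprojlim_\nu H_\nu^\ab[\ell^\nu]
        \xrightarrow{\cong}
    \varprojlim_\nu \mu_{\ell^\nu}(K^\alg)
        = \ZZ_\ell(1),
\]
which is assertion (1). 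For (2), I would take $\ell = p_K = p(G_K^\ab)$ (by \cref{proposition:2025.10.15.11.34.21}). By (1) there is a $G_K^2$-equivariant isomorphism $T_{p_K}(G_K^2) \cong \ZZ_{p_K}(1)$; and because it is assembled from the identifications $H_\nu^\ab[p_K^\nu] \cong \mu_{p_K^\nu}(K^\alg)$---which are restrictions of the genuine $G_K$-module structure on the roots of unity in $K^\alg$---pulling the $G_K^2$-action back along the projection $G_K \twoheadrightarrow G_K^2$ recovers the Galois action of $G_K$ on $\varprojlim_\nu \mu_{p_K^\nu}(K^\alg)$, namely $\chi_K$. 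Since $\chi(G_K^2) = \chi^{(p_K)}(G_K^2)$ is by definition the character through which $G_K^2$ acts on $T_{p_K}(G_K^2)$, we conclude $\chi_K = \chi(G_K^2) \circ (G_K \twoheadrightarrow G_K^2)$, i.e.\ $\chi_K$ factors through $\chi(G_K^2)$.

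The step I expect to be the main obstacle is the equivariance of the second paragraph: one must carefully transport the conjugation action of $G_K^2$ through both the identification $H_\nu^\ab \cong G_{L_\nu}^\ab$ of \cref{lemma:2025.10.15.11.33.30} and the reciprocity isomorphism, and confirm that it lands on the genuine cyclotomic (Tate-twist) action on $\mu_{\ell^\nu}(K^\alg)$---rather than on some untwisted or otherwise modified action. Once that compatibility is secured, matching the transition maps and passing to the limit is routine.
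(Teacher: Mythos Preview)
Your proposal is correct and follows essentially the same route as the paper: identify $H_\nu^{\ab}$ with $G_{L_\nu}^{\ab}$ via \cref{lemma:2025.10.15.11.33.30}, use the Galois-equivariance of $\Art_{L_\nu}$ to see that $H_\nu^{\ab}[\ell^\nu]\cong(\ZZ/\ell^\nu\ZZ)(1)$ as $G_K^2$-modules, match the transition maps $(-)^\ell$ with the $\ell$-th power maps on roots of unity (compatibly with $\Ver\leftrightarrow$ inclusion), and pass to the limit. The only cosmetic difference is that you open by fixing $L_\nu=K(\zeta_{\ell^\nu})$, whereas the paper works with an \emph{arbitrary} sequence $\{H_\nu\}$ satisfying (i) and (ii)---which is what actually justifies your parenthetical claim that the isomorphism class of $H_\nu^{\ab}[\ell^\nu]$ is independent of the choice; since your equivariance argument only uses $\mu_{\ell^\nu}(L_\nu)=\mu_{\ell^\nu}(K^{\alg})$ (a consequence of condition (i) for any admissible $L_\nu$), it carries over verbatim, so this is a matter of presentation rather than a gap.
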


Intuitively speaking, $\chi_{\cycl, K} \colon G_K \to \widehat{\ZZ}^\times$ and $\chi_K$ can be recovered entirely group-theoretically from the profinite group $G_K^2$.

\begin{proof}
    (1) We take a decreasing sequence
    \begin{equation*}
G_K^2
        =
            H_{K, 0}
        \supseteq
            H_{K, 1}
        \supseteq
            \cdots
        \supseteq
            H_{K, \nu}
        \supseteq
            \cdots
    \end{equation*}
    of open normal subgroups of $G_K^2$ satisfying the above conditions (i) and (ii).
    We shall write $L_\nu$ for the corresponding fixed field $(K^2)^{H_{K, \nu}}$ of $H_{K, \nu}$.
    By \cref{lemma:2025.10.15.11.33.30} and the condition (ii), we have $G_{L_\nu}^\ab = H_{K, \nu}^\ab$ for each $\nu$, and thus we have a group homomorphism
    \begin{equation*}
        r_\nu
        \coloneqq
            \Art_{L_\nu}
            \colon
                L_\nu^\times
            \to
                H_{K, \nu}^\ab.
    \end{equation*}
    It is implied by the condition (i) that $L_\nu$ contains the $(\ell^\nu)$\textsuperscript{th} roots of unity.
    Moreover, it can be seen from local class field theory that $r_\nu$ respects the $G_K^2$-action (cf. \cite[Chap. IV, (4.2)]{FesenkoVostokov1993}).
    We obtain by restriction the $G_K^2$-module isomorphism
    \begin{equation*}
        r_\nu
        \colon \,\,\,
            (
(\ZZ/\ell^\nu\ZZ)_+
                \cong
            ) \,
            \mu_{\ell^\nu} (L_\nu) \,\,\,
        \xrightarrow{\cong} \,\,\,
            (
(\ZZ/\ell^\nu\ZZ)_+
                \cong
            ) \,
            H_{K, \nu}^\ab [\ell^\nu],
    \end{equation*}
    and the commutative diagram
    \begin{equation*}
        \begin{tikzcd}
            {{L_{\nu+1}^\times}} & {{L_{\nu+1}^\times}} \\
            {H_{K, \nu+1}^\ab} & {H_{K, \nu+1}^\ab}
            \arrow["{(-)^{\ell}}", from=1-1, to=1-2]
            \arrow["r_{\nu+1}", from=1-1, to=2-1]
            \arrow["r_{\nu+1}", from=1-2, to=2-2]
            \arrow["{(-)^{\ell}}", from=2-1, to=2-2]
        \end{tikzcd}
    \end{equation*}
    of $G_K^2$-modules.
    We also know from local class field theory that the diagram
    \begin{equation*}
        \begin{tikzcd}
            {{L_\nu^\times}} & {{L_{\nu+1}^\times}} \\
            {H_{K, \nu}^\ab} & {H_{K, \nu+1}^\ab}
            \arrow["\subseteq", from=1-1, to=1-2]
            \arrow["r_\nu", from=1-1, to=2-1]
            \arrow["r_{\nu+1}", from=1-2, to=2-2]
            \arrow["\Ver", from=2-1, to=2-2]
        \end{tikzcd}
    \end{equation*}
    commutes.
    Hence we have the following commutative diagram:
    \begin{equation*}
        \begin{tikzcd}
            \cdots & {\mu_{\ell^{\nu+1}}( L_{\nu+1})} & {\mu_{\ell^\nu}( {L_{\nu}})} & \cdots & {\mu_{\ell}( {L_{1}})} \\
            \cdots & {H_{K, \nu+1}^\ab[\ell^{\nu+1}]} & {H_{K, \nu}^\ab[\ell^\nu]} & \cdots & {H_{K,1}^\ab[\ell]}
            \arrow["{(-)^{\ell}}", from=1-2, to=1-3]
            \arrow["{(-)^{\ell}}", from=2-2, to=2-3]
            \arrow["{\cong,r_{\nu+1}}", from=1-2, to=2-2]
            \arrow["{\cong,r_{\nu}}", from=1-3, to=2-3]
            \arrow["{(-)^{\ell}}", from=1-1, to=1-2]
            \arrow["{(-)^{\ell}}", from=2-1, to=2-2]
            \arrow["{(-)^{\ell}}", from=1-3, to=1-4]
            \arrow["{(-)^{\ell}}", from=2-3, to=2-4]
            \arrow["{(-)^{\ell}}", from=1-4, to=1-5]
            \arrow["{(-)^{\ell}}", from=2-4, to=2-5]
            \arrow["{\cong,r_{1}}", from=1-5, to=2-5]
        \end{tikzcd}.
    \end{equation*}
    By passage to the limit, we obtain the following isomorphism of $G_K^2$-modules.
    \begin{equation*}
        r
        \coloneqq
            \varprojlim_\nu r_\nu
            \colon \,\,\,
\ZZ_{\ell} (1)
                =
                    \varprojlim_\nu \mu_{\ell^\nu} (L_\nu) \,\,\,
            \xrightarrow{\cong} \,\,\,
T_\ell (G_K^2)
                =
                    \varprojlim_\nu H_{K, \nu}^\ab[\ell^\nu]
    \end{equation*}
    
    (2) It is clear from (1) and \cref{proposition:2025.10.15.11.34.21}.
\end{proof}

\begin{remark}
    \cref{proposition:2025.10.15.11.34.35} can be considered as a local analogue of \cite[Proposition A.9]{SaidiTamagawa2022}.
\end{remark}
 
\section{Ramification groups}
\label{section:2025.10.15.11.34.49}

We keep the notation and hypotheses of \cref{section:2025.10.15.11.33.53}.
In this section, we recover the $G_K$-module structure of $(K^\ab)_+$ and its $p_K$-adic completion from the \emph{filtered} profinite group $G_K^2$.

Assume that $G$ is a profinite group of ${\MLF}^{m + 1}$-type for an integer $m \geq 1$.
It follows directly from \cref{lemma:2025.10.15.11.33.30} that if $H$ is an open subgroup of $G$ containing $G^{[m]}$, then $H^\ab$ is a profinite group of $\MLF^\ab$-type.
We denote by $I (G)$ the intersection of open subgroups $H$ such that $H \supseteq G^{[1]}$ and $e (H^\ab) = e (G^\ab)$.
We also denote by $P (G)$ the (necessarily unique) pro-$p (G^\ab)$-Sylow subgroup of $I (G)$.

\begin{lemma}
    \label{lemma:2025.10.15.11.34.55}
    Let $K$ be a mixed-characteristic local field, and let $m$ be an integer $\geq 1$.
    Then the \emph{inertia group} (resp. \emph{wild inertia group}) of $G_K^{m + 1}$ equals $I (G_K^{m + 1})$ (resp. $P (G_K^{m + 1})$).
    In particular, the inertia group (resp. wild inertia group) of $G_K^{m + 1}$ can be determined entirely group-theoretically, without the additional information on filtration.
\end{lemma}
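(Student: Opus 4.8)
The plan is to translate the group-theoretic recipe defining $I(G_K^{m+1})$ and $P(G_K^{m+1})$ into the language of subextensions of $K^\ab$ via the Galois correspondence, and then to match the result against the standard description of the inertia and wild inertia groups recalled in \cref{section:2025.10.15.11.32.38}. Throughout I write $G = G_K^{m+1}$, so that $G^\ab = G_K^\ab$, $e(G^\ab) = e_K$, and $p(G^\ab) = p_K$ by \cref{proposition:2025.10.15.11.34.21}. First I would observe that, since the sequence $\{ G^{[j]} \}_j$ is decreasing and $m \geq 1$, any open subgroup $H \supseteq G^{[1]}$ automatically contains $G^{[m]}$; hence \cref{lemma:2025.10.15.11.33.30}(2) applies and gives $H^\ab \cong G_L^\ab$, where $L \coloneqq (K^{m+1})^H$ is a finite abelian extension of $K$ contained in $K^\ab$. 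By \cref{proposition:2025.10.15.11.34.21} applied to $L$, we then have $e(H^\ab) = e(G_L^\ab) = e_L$. The crucial dictionary is that, by multiplicativity of the absolute ramification index ($e_L = e_{L/K}\, e_K$), the equality $e_L = e_K$ holds if and only if $L/K$ is unramified. Thus the subgroups $H$ entering the definition of $I(G)$ are precisely the $\Gal(K^{m+1}/L)$ with $L/K$ finite and unramified.

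Next I would pass to the intersection. Every finite unramified extension of $K$ is abelian (the residue extension being cyclic), and the compositum of all of them is $K^\un \subseteq K^\ab \subseteq K^{m+1}$. Hence the Galois correspondence yields
\begin{equation*}
I(G)
    =
        \bigcap_{L} \Gal(K^{m+1}/L)
    =
        \Gal\bigl(K^{m+1}/K^\un\bigr)
    =
        G_K^{m+1}(0),
\end{equation*}
the last equality being the description of the inertia group recalled in \cref{section:2025.10.15.11.32.38}. This settles the first assertion.

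For the wild inertia group I would exploit the compatibility of the ramification filtration with the quotient $G_K \twoheadrightarrow G_K^{m+1}$ (cf. \eqref{equation:2025.10.15.11.32.46}): the group $G_K^{m+1}(0)$ is the image of $G_K(0)$ and $G_K^{m+1}(0+)$ is the image of $G_K(0+)$. Since $G_K(0+)$ is pro-$p_K$ and the tame quotient $G_K(0)/G_K(0+)$ is pro-prime-to-$p_K$, these properties are inherited by their images; so $G_K^{m+1}(0+)$ is a closed normal pro-$p_K$ subgroup of $G_K^{m+1}(0)$ with pro-prime-to-$p_K$ quotient, which forces it to be the unique pro-$p_K$-Sylow subgroup. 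As $P(G)$ is by definition the pro-$p(G^\ab)$-Sylow subgroup of $I(G) = G_K^{m+1}(0)$ and $p(G^\ab) = p_K$, I conclude $P(G) = G_K^{m+1}(0+)$, the wild inertia group.

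The argument hinges entirely on the exactness of the dictionary $e(H^\ab) = e(G^\ab) \Leftrightarrow L/K$ unramified, and I expect the one point requiring care to be the verification that the purely group-theoretic invariant $e(H^\ab)$ genuinely computes $e_L$ for \emph{every} admissible $H$; this is exactly where \cref{lemma:2025.10.15.11.33.30}(2) is needed to ensure $H^\ab$ is of $\MLF^\ab$-type before \cref{proposition:2025.10.15.11.34.21} can be invoked. Finally, since $I(G)$ and $P(G)$ are assembled only from $G^{[1]}$, the invariants $e(\cdot)$ and $p(\cdot)$, and the notion of pro-$p$-Sylow subgroup---all of which are purely group-theoretic---no appeal to the ramification filtration is made, which yields the ``without filtration'' clause.
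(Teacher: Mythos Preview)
Your proof is correct and follows essentially the same approach as the paper's: both identify the open subgroups $H \supseteq G^{[1]}$ with $e(H^\ab) = e(G^\ab)$ as exactly those corresponding to finite unramified extensions of $K$ (via \cref{lemma:2025.10.15.11.33.30} and \cref{proposition:2025.10.15.11.34.21}), take the intersection to obtain $\Gal(K^{m+1}/K^\un)$, and then argue that the wild inertia group is the unique pro-$p_K$-Sylow subgroup of the inertia group. Your write-up simply makes explicit the steps that the paper compresses into a single sentence.
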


\begin{proof}
    Keeping in mind that \emph{every unramified extension of $K$ is abelian}, one checks by using \cref{lemma:2025.10.15.11.33.30} and \cref{proposition:2025.10.15.11.34.21} that the open subgroups of $G_K^{m + 1}$ containing $I (G_K^{m + 1})$ are precisely the ones corresponding to the finite unramified extensions over $K$; hence $I (G_K^{m + 1})$ equals the inertia group.
    Since the wild inertia group is nothing but the unique pro-$p_K$-Sylow subgroup of the inertia group
    (For the finite order case, see \cite[Chap. IV, Corollaries 1 and 3 of Proposition 7]{Serre1979}.
    One easily reduces to this case, since wild inertia groups are compatible with quotients, cf. \emph{loc. cit.}, Exercise 1 of \S2.),
    the assertion on the wild inertia group holds as well.
\end{proof}

\begin{remark}
    If $m$ is an integer $\geq 2$ and $H$ is an open subgroup of $G$ containing $G^{[2]}$, $H^\ab$ is a profinite group of $\MLF^{\ab}$-type as remarked above.
    Hence in the case $m \geq 2$, one could alternatively define $P (G)$ as the intersection of open subgroups $H$ such that $H \supseteq G^{[2]}$ and $e (H^\ab) / e (G^\ab)$ is coprime to $p (G^\ab)$: keeping in mind that \emph{every tamely ramified extension of $K$ is metabelian}, one checks as in the above proof that the open subgroups of $G_K^{m + 1}$ containing $P (G_K^{m + 1})$ are precisely the ones corresponding to the finite tamely ramified extensions over $K$.
    Thus $P (G_K^{m + 1})$ equals the wild inertia group of $G_K^{m + 1}$.
\end{remark}

Once again, let $G$ be a profinite group of ${\MLF}^{m + 1}$-type for an integer $m \geq 1$, and let $\mathscr{H}_m (G)$ denote the set of open normal subgroups of $G$ containing $G^{[m]}$, ordered by reverse inclusion.
For each $H \in \mathscr{H}_m (G)$, we denote by $U (H)$ the image of $H \cap P (G)$ under the natural map $H \twoheadrightarrow H^\ab$, then we see that $G$ acts on $U (H)$ by conjugation.

We first claim that, for $H_1, H_2 \in \mathscr{H}_m (G)$ with $H_1\supseteq H_2$, the transfer map $\Ver \colon H_1^\ab \to H_2^\ab$ restricts to $U (H_1) \to U (H_2)$, and that $\left\{ U (H) \right\}_{H \in \mathscr{H}_m (G)}$ forms a direct system of $G$-modules, together with $V_{1,2} \coloneqq \Ver \vert_{U (H_1)} \colon U (H_1) \to U (H_2)$ for each pair $H_1 \supseteq H_2$.
Suppose that there exists an isomorphism $\alpha \colon G \to G_K^{m + 1}$ of profinite groups for some mixed-characteristic local field $K$.
Then, for each $\square \in \left\{ 1, 2 \right\}$, the image of $H_\square$ equals $\Gal (K^{m + 1} / L_\square)$ for a finite Galois subextension $L_\square / K$ of $K^m / K$.
Note that $\Gal (K^{m + 1} / L_\square)^\ab = G_{L_\square}^\ab$ by \cref{lemma:2025.10.15.11.33.30} (and hence $H_\square^\ab$ is of $\MLF^\ab$-type).
The isomorphism $\alpha_\square \colon H_\square^\ab \to \Gal (K^{m + 1} / L_\square)^\ab$ induced by $\alpha$ indeed fits into the following commutative diagram:
\begin{equation}
    \label{equation:2025.10.17.23.48.58}
    \begin{tikzcd}
        {H^\ab_1} & {\Gal(K^{m + 1}/L_1)^\ab = G_{L_1}^\ab} & {L_1^\times} \\
        {H^\ab_2} & {\Gal(K^{m + 1}/L_2)^\ab = G_{L_2}^\ab} & {L_2^\times}
        \arrow["\Ver", from=1-1, to=2-1]
        \arrow["\Ver", from=1-2, to=2-2]
        \arrow["{{\cong,\alpha_1}}", from=1-1, to=1-2]
        \arrow["{{\cong,\alpha_2}}", from=2-1, to=2-2]
        \arrow["\subseteq", from=1-3, to=2-3]
        \arrow["{\Art_{L_1}}"', from=1-3, to=1-2]
        \arrow["{\Art_{L_2}}"', from=2-3, to=2-2]
    \end{tikzcd}
\end{equation}
We see from \cref{lemma:2025.10.15.11.34.55} that $H_\square \cap P (G) \subseteq G$ is mapped onto
\begin{equation*}
\Gal (K^{m + 1} / L_\square) \cap P (G_K^{m + 1})
    =
        \Gal (K^{m + 1} / L_\square) \cap G_K^{m + 1} (0+)
    =
        \Gal (K^{m + 1} / L_\square) (0+)
\end{equation*}
under $\alpha$; hence $U (H_\square) \subseteq H_\square^\ab$ is mapped onto $G_{L_\square}^\ab (0+)$ under $\alpha_\square$.
Therefore, it suffices to show that the middle vertical arrow restricts to $G_{L_1}^\ab (0+) \to G_{L_2}^\ab (0+)$.
But by local class field theory and the theorem of Hasse-Arf (cf. \cite[Chap. V]{Serre1979}), $U_{L_\square} (1)$ is mapped onto $G_{L_\square}^\ab (0+)$ under the local reciprocity map $\Art_{L_\square}$, and hence
\begin{equation*}
\Ver (G_{L_1}^\ab (0+))
    =
        \Ver (\Art_{L_1} (U_{L_1} (1)))
    \subseteq
        \Art_{L_2} (U_{L_2} (1))
    =
        G_{L_2}^\ab (0+).
\end{equation*}
In particular, the restriction of $\Art_{L_\square}$ to $U_{L_\square} (1) \to G_{L_{\square}}^\ab (0+)$ is an isomorphism.
It follows immediately that $\left\{ U (H) \right\}_{H \in \mathscr{H}_m (G)}$ is a direct system induced by the direct system $\left\{ U_L (1) \right\}_{L / K}$, where $L / K$ runs through the finite Galois subextensions of $K^m / K$; each $U (H)$ is a (topological) $\ZZ_p$-module of finite rank, where $p \coloneqq p (G^\ab) = p_K$.
Hence we obtain a direct system $\left\{ U (H) \otimes_{\ZZ_p} \QQ_p \right\}_{H \in \mathscr{H}_m (G)}$ of $G$-modules;
we set
\begin{equation*}
    (k^m)_+ (G)
    \coloneqq
        \varinjlim_{H \in \mathscr{H}_m (G)} \left(
            U (H) \otimes_{\ZZ_{p}} \QQ_{p}
        \right).
\end{equation*}

\begin{proposition}
    \label{proposition:2025.10.15.11.35.03}
    Let $K$ be a mixed-characteristic local field, and let $m$ be an integer $\geq 1$.
    Then there exists an isomorphism
    \begin{equation*}
(k^m)_+ (G_K^{m + 1})
        \xrightarrow{\cong}
            (K^m)_+
    \end{equation*}
    of $G_K^{m + 1}$-modules.
\end{proposition}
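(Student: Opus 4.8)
The plan is to realize the group-theoretic module $(k^m)_+(G_K^{m+1})$ concretely as a direct limit of first higher unit groups, and then to transport it to the additive side by means of the $p$-adic logarithm. Throughout I take $G = G_K^{m+1}$ and work with the identity isomorphism, so that each $H \in \mathscr{H}_m(G)$ is literally $\Gal(K^{m+1}/L)$ for a finite Galois subextension $L/K$ of $K^m/K$.

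First I would record what the discussion preceding the proposition already yields. By \cref{lemma:2025.10.15.11.34.55} together with the computation surrounding \eqref{equation:2025.10.17.23.48.58}, the subgroup $U(H) \subseteq H^\ab = G_L^\ab$ is exactly the wild inertia $G_L^\ab(0+)$, and the local reciprocity map restricts to a $\Gal(L/K)$-equivariant isomorphism $\Art_L \colon U_L(1) \xrightarrow{\cong} G_L^\ab(0+)$. Hence, as $G_K^{m+1}$-modules,
\[
(k^m)_+(G_K^{m+1}) \;\cong\; \varinjlim_{L} \bigl( U_L(1) \otimes_{\ZZ_p} \QQ_p \bigr),
\]
the limit being over the finite Galois subextensions $L/K$ of $K^m/K$, with transition maps induced by the transfer $\Ver$.

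Next I would introduce the $p$-adic logarithm $\log \colon U_L(1) \to L_+$. Its kernel is the finite group $\mu_{p^\infty}(L)$ and its image is an open $\ZZ_p$-lattice in $L$, so after $\otimes_{\ZZ_p}\QQ_p$ it becomes a $\QQ_p$-linear isomorphism onto $L_+$, both sides having dimension $[L:\QQ_p]$. Since $\log$ is given by a power series with rational coefficients, it commutes with every field automorphism and is therefore $\Gal(L/K)$-equivariant, hence compatible with the conjugation action of $G_K^{m+1}$. For compatibility with the transition maps I would use that, for $L_1 \subseteq L_2$, the square \eqref{equation:2025.10.17.23.48.58} identifies $\Ver$ with the inclusion $U_{L_1}(1) \hookrightarrow U_{L_2}(1)$; as $\log_{L_2}$ restricts to $\log_{L_1}$, the logarithms intertwine this inclusion with the field inclusion $L_1 \hookrightarrow L_2$. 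Thus $\{\log\}_L$ is an isomorphism of direct systems, and passing to the limit---using $K^m = \varinjlim_L L$---produces the desired $G_K^{m+1}$-module isomorphism onto $(K^m)_+$.

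The main obstacle is not analytic but organizational: one must verify carefully that the transfer maps correspond to inclusions of unit groups (rather than norm maps) and that these match the field inclusions under the logarithm, all while tracking the $G_K^{m+1}$-action as it factors through $\Gal(K^m/K) = G_K^m$. The fact that $\log$ becomes an isomorphism only after inverting $p$---thereby absorbing both the root-of-unity kernel and the gap between its lattice image and all of $L$---is classical and should present no serious difficulty.
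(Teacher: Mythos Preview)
Your proposal is correct and follows essentially the same route as the paper: identify $U(H)$ with $G_L^{\ab}(0+)$ via \cref{lemma:2025.10.15.11.34.55}, transport to $U_L(1)$ by $\Art_L^{-1}$, apply the $p$-adic logarithm after $\otimes_{\ZZ_p}\QQ_p$, check that the transfer maps become the inclusions $U_{L_1}(1)\hookrightarrow U_{L_2}(1)$ (exactly the content of \eqref{equation:2025.10.17.23.48.58}), and pass to the limit. The paper packages these steps into the single commutative diagram \eqref{equation:2025.10.15.11.35.10}, but the argument is the same.
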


Speaking from an intuitive level, the $G_K^{m + 1}$-module $(K^m)_+$ can be recovered entirely group-theoretically from the profinite group $G_K^{m + 1}$.

\begin{proof}
    Let $H_\square = \Gal (K^{m + 1} / L_\square) \in \mathscr{H}_m (G_K^{m + 1})$ for each $\square \in \left\{ 1, 2 \right\}$, and assume that $H_1 \supseteq H_2$.
    By construction, $U (H_\square) = \Gal (K^{m + 1} / L_\square)^\ab (0+) = G_{L_\square}^\ab (0+)$.
    The $p_K$-adic logarithm (cf. \cites[Chap. II, \S5]{Neukirch1999}[Chap. IV, \S2]{Koblitz1984}) gives the commutative diagram
    \begin{equation}
        \label{equation:2025.10.15.11.35.10}
        \begin{tikzcd}[column sep=3.5em]
            {G_{L_1}^\ab(0+)\otimes_{\ZZ_{p_K}} \QQ_{p_K}} & {U_{L_1}(1)\otimes_{\ZZ_{p_K}} \QQ_{p_K}} & {(L_1)_+} \\
            {G_{L_2}^\ab(0+)\otimes_{\ZZ_{p_K}} \QQ_{p_K}} & {U_{L_2}(1)\otimes_{\ZZ_{p_K}} \QQ_{p_K}} & {(L_2)_+}
            \arrow["{{\cong,\Art_{L_1}^{-1}}}", from=1-1, to=1-2]
            \arrow["{{{{V_{1,2}\otimes \id}}}}", from=1-1, to=2-1]
            \arrow["{{{{\cong, \log}}}}", from=1-2, to=1-3]
            \arrow["\subseteq\otimes\id", from=1-2, to=2-2]
            \arrow["\subseteq", from=1-3, to=2-3]
            \arrow["{{\cong,\Art_{L_2}^{-1}}}", from=2-1, to=2-2]
            \arrow["{{{{\cong,\log}}}}", from=2-2, to=2-3]
        \end{tikzcd}
    \end{equation}
    of $G_K^{m + 1}$-modules.
    By passage to the limit, we obtain the desired isomorphism
    \begin{equation*}
(k^m)_+ (G_K^{m + 1})
            =
                \varinjlim_{L / K} \left(
                    G_{L}^\ab (0+) \otimes_{\ZZ_{p_K}} \QQ_{p_K}
                \right) \,\,\,
        \xrightarrow{\cong} \,\,\,
(K^m)_+
            =
                \varinjlim_{L / K} L_+,
    \end{equation*}
    where $L / K$ runs through the finite Galois subextensions of $K^{m} / K$.
\end{proof}

For an infinite algebraic extension $F / K$, we shall denote by $\mathscr{C}_F$ or $\mathscr{C} (F)$ the $p_K$-adic completion of $F$, and often write $\CC_{p_K}$ instead of $\mathscr{C}_{K^\alg}$.
In \cite[Proposition 2.2]{Mochizuki1997}, it is shown that the isomorphism class of the $G_K$-module $(\CC_{p_K})_+$ can be recovered group-theoretically from the \emph{filtered} profinite group $G_K$; we will prove an ``$m$-step solvable analogue'' of this result.
To do so, we further assume that $G$ is a \emph{filtered} profinite group of $\MLF^{m + 1}$-type, and that $\alpha$ is an isomorphism of \emph{filtered} profinite groups.

For any closed normal subgroup $N$ of $G$, we shall equip $G / N$ with the filtration defined by
\begin{equation*}
    (G / N) (v)
    \coloneqq
        G (v) N / N
\end{equation*}
for each $v \geq 0$.

Suppose that $H \in \mathscr{H}_m (G)$.
Then there exists a finite Galois subextension $L / K$ of $K^m / K$ such that $\alpha (H) = \Gal (K^{m + 1} / L)$.
We define the functions $\phi_H, \psi_H \colon [0, +\infty) \to [0, +\infty)$ by
\begin{align*}
\psi_H (v)
    &\coloneqq
        \int_0^v (
            (G / H) : (G / H) (w)
        ) \, dw, \\
\phi_H (w)
    &\coloneqq
        \psi_H^{-1} (w).
\end{align*}
We regard $H$ as a filtered profinite group by setting
\begin{equation*}
    H (w)
    \coloneqq
        \varprojlim_N \left\{
            (H / N) \cap (G / N) (\phi_H (w))
        \right\} \quad
        \left(
            \subseteq
                H
            =
                \varprojlim_N (H / N)
        \right)
\end{equation*}
for each $w \geq 0$, where $N$ runs through the open subgroups of $H$ such that $N \unlhd G$.
As a direct consequence of \cref{lemma:2025.10.15.11.33.03}, $\alpha \vert_H \colon H \to \Gal (K^{m + 1} / L)$ is an isomorphism of \emph{filtered} profinite groups.

We denote by $U (H, w)$ the image of $H (w)$ under the natural map $H \twoheadrightarrow H^\ab$.
Then we see that $G$ acts on $U (H, w)$ by conjugation.
We claim that, for $H_1, H_2 \in \mathscr{H}_m (G)$ with $H_1 \supseteq H_2$, the transfer map $\Ver \colon H_1^\ab \to H_2^\ab$ restricts to
\begin{equation*}
U (H_1, \varepsilon (G) e (H_1^\ab))
    \to
        U (H_2, \varepsilon (G) e (H_2^\ab)),
\end{equation*}
and that if we denote by $U' (H)$ the group $U (H, \varepsilon (G) e (H^\ab))$ for each $H \in \mathscr{H}_m (G)$,
\begin{equation*}
    \left\{ U' (H) \right\}_{H \in \mathscr{H}_m (G)}
\end{equation*}
forms a direct system of $G$-modules, together with $V'_{1, 2} \coloneqq \Ver \vert_{U' (H_1)} \colon U' (H_1) \to U' (H_2)$ for each pair $H_1 \supseteq H_2$.
As we have seen above, $H_\square (w)$ is mapped onto $\Gal (K^{m + 1} / L_\square) (w)$ under $\alpha \vert_{H_\square}$ for all $w \geq 0$; thus $U' (H_\square) \subseteq H_\square^\ab$ is mapped onto $G_{L_\square}^\ab (\varepsilon (G) e (H_\square^\ab))$ under $\alpha_\square$.
Therefore, in order to prove the claim, it suffices to show that the middle vertical arrow of \eqref{equation:2025.10.17.23.48.58} restricts to
\begin{equation*}
G_{L_1}^\ab (\varepsilon_K e_{L_1})
    \to
        G_{L_2}^\ab (\varepsilon_K e_{L_2}).
\end{equation*}
We have
\begin{equation*}
\mathfrak{p}_{L_1}^{\varepsilon_K e_{L_1}}
    \subseteq
        \left(
            \mathfrak{p}_{L_1} \mathscr{O}_{L_2}
        \right)^{\varepsilon_K e_{L_1}}
    =
        \left(
            \mathfrak{p}_{L_2}^{e_{L_2} / e_{L_1}}
        \right)^{\varepsilon_K e_{L_1}}
    =
        \mathfrak{p}_{L_2}^{\varepsilon_K e_{L_2}},
\end{equation*}
and it follows that $U_{L_1} (\varepsilon_K e_{L_1}) \subseteq U_{L_2} (\varepsilon_K e_{L_2})$.
Together with the fact that $\Art_{L_\square}$ restricts to an isomorphism $U_{L_\square} (w) \to G_{L_\square}^\ab (w)$ for all $w \geq 0$ \cite[p.155, Theorem 1]{Serre1976}, we conclude that
\begin{equation*}
\Ver (G_{L_1}^\ab (\varepsilon_K e_{L_1}))
    =
        \Ver (\Art_{L_1} (U_{L_1} (\varepsilon_K e_{L_1})))
    \subseteq
        \Art_{L_2} (U_{L_2} (\varepsilon_K e_{L_2}))
    =
        G_{L_2}^\ab (\varepsilon_K e_{L_2}).
\end{equation*}
Therefore, we obtain a direct system $\left\{ U' (H) \right\}_{H \in \mathscr{H}_m (G)}$ of $G$-modules in a way similar to the way in which we obtained $\left\{ U (H) \right\}_{H \in \mathscr{H}_m (G)}$.
We again put $p \coloneqq p (G^\ab) = p_K$.
Note that, for each $H \in \mathscr{H}_m (G)$ and the subextension $L / K$ fixed by $\alpha (H)$, the natural map $U_L (\varepsilon_K e_L) \to U_L (1) \otimes_{\ZZ_p} \QQ_p$ (and hence the natural map $U' (H) \to U (H) \otimes_{\ZZ_p} \QQ_p$) is injective since $U_L (\varepsilon_K e_L)$ is contained in the non-torsion part of $U_L$ (as $\varepsilon_K e_L > e_L / (p - 1)$ and the $p$-adic logarithm restricts to an isomorphism $U_L (\varepsilon_K e_L) \to \mathfrak{p}_L^{\varepsilon_K e_L}$).
We identify $U' (H)$ with a submodule of $U (H) \otimes_{\ZZ_p} \QQ_p$;
we set
\begin{align*}
    (\mathscr{O}_{k^m})_+ (G)
    &\coloneqq
        p^{-\varepsilon (G)} \left(
            \varinjlim_{H \in \mathscr{H}_m (G)} U' (H)
        \right) \quad
        \left(
            \subseteq
                (k^m)_+ (G)
            =
                \varinjlim_{H \in \mathscr{H}_m (G)} \left(
                    U (H) \otimes_{\ZZ_p} \QQ_p
                \right)
        \right), \\
(\mathscr{C}_{k^m})_+ (G)
    &\coloneqq
        \left(
            \varprojlim_{n} \left(
                (\mathscr{O}_{k^m})_+ (G) / p^n \cdot (\mathscr{O}_{k^m})_+ (G)
            \right)
        \right) \otimes_{\ZZ_p} \QQ_p.
\end{align*}

\begin{proposition}
    \label{proposition:2025.10.15.11.38.37}
    Let $K$ be a mixed-characteristic local field, and let $m$ be an integer $\geq 1$.
    Then the isomorphism of \cref{proposition:2025.10.15.11.35.03} restricts to an isomorphism
    \begin{equation*}
(\mathscr{O}_{k^m})_+ (G_K^{m + 1})
        \xrightarrow{\cong}
            (\mathscr{O}_{K^m})_+
    \end{equation*}
    of $G_K^{m + 1}$-modules, where $\mathscr{O}_{K^m}$ denotes the integral closure of $\mathscr{O}_K$ in $K^m$.
    In particular, there exists an isomorphism
    \begin{equation*}
(\mathscr{C}_{k^m})_+ (G_K^{m + 1})
        \xrightarrow{\cong}
            (\mathscr{C}_{K^m})_+
    \end{equation*}
    of $G_K^{m + 1}$-modules.
\end{proposition}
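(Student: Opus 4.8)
The plan is to trace the isomorphism $\Phi$ of \cref{proposition:2025.10.15.11.35.03} level by level and check that it carries the integral submodule $(\mathscr{O}_{k^m})_+(G_K^{m+1})$ precisely onto $(\mathscr{O}_{K^m})_+$. Recall that $\Phi=\varinjlim_{L/K}\Phi_L$, where for each finite Galois subextension $L/K$ of $K^m/K$ the component $\Phi_L$ is the composite $G_L^\ab(0+)\otimes_{\ZZ_p}\QQ_p\xrightarrow{\Art_L^{-1}}U_L(1)\otimes_{\ZZ_p}\QQ_p\xrightarrow{\log}L_+$. By \cref{proposition:2025.10.15.11.34.21} we have $\varepsilon(G)=\varepsilon_K$ and $e(H^\ab)=e_L$ for $H=\Gal(K^{m+1}/L)$, so the module $U'(H)=U(H,\varepsilon_K e_L)$ is well defined and, as established in the discussion preceding the statement, corresponds to $G_L^\ab(\varepsilon_K e_L)$ under $\alpha_H$, hence to $U_L(\varepsilon_K e_L)$ under $\Art_L^{-1}$.

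The core is then a purely local computation at each $L$. Since $\varepsilon_K e_L>e_L/(p-1)$, the $p_K$-adic logarithm restricts to an isomorphism $U_L(\varepsilon_K e_L)\xrightarrow{\cong}\mathfrak{p}_L^{\varepsilon_K e_L}$ (cf. \cites[Chap. II, \S5]{Neukirch1999}[Chap. IV, \S2]{Koblitz1984}), and from $p\mathscr{O}_L=\mathfrak{p}_L^{e_L}$ we obtain $\mathfrak{p}_L^{\varepsilon_K e_L}=(p\mathscr{O}_L)^{\varepsilon_K}=p^{\varepsilon_K}\mathscr{O}_L$. Consequently $\Phi_L$ sends $U'(H)$ to $p^{\varepsilon_K}\mathscr{O}_L$, so the normalization $p^{-\varepsilon_K}$ built into the definition of $(\mathscr{O}_{k^m})_+$ makes $\Phi_L$ carry $p^{-\varepsilon_K}U'(H)$ isomorphically onto $\mathscr{O}_L$, viewed inside $L_+$.

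Next I would pass to the direct limit. The transition maps $V'_{1,2}=\Ver\vert_{U'(H_1)}$ correspond, under the commutative diagram \eqref{equation:2025.10.17.23.48.58} together with the $\Art_L$-compatibility already recorded, to the inclusions $U_{L_1}(\varepsilon_K e_{L_1})\hookrightarrow U_{L_2}(\varepsilon_K e_{L_2})$, hence after $\log$ and the rescaling to the inclusions $\mathscr{O}_{L_1}\hookrightarrow\mathscr{O}_{L_2}$. Thus $\varinjlim_{L/K}\mathscr{O}_L$ is exactly the integral closure $\mathscr{O}_{K^m}$ of $\mathscr{O}_K$ in $K^m=\varinjlim_{L/K}L$, and taking the colimit of the $\Phi_L$ yields the restricted isomorphism $(\mathscr{O}_{k^m})_+(G_K^{m+1})\xrightarrow{\cong}(\mathscr{O}_{K^m})_+$ of $G_K^{m+1}$-modules, the equivariance being inherited from that of $\Phi$.

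For the final assertion I would apply the functor $M\mapsto(\varprojlim_n M/p^nM)\otimes_{\ZZ_p}\QQ_p$—which is precisely what the definition of $(\mathscr{C}_{k^m})_+(G)$ encodes—to the $G_K^{m+1}$-equivariant isomorphism just obtained; since this functor preserves $G_K^{m+1}$-equivariance, it remains only to identify $(\varprojlim_n\mathscr{O}_{K^m}/p^n\mathscr{O}_{K^m})\otimes_{\ZZ_p}\QQ_p$ with $(\mathscr{C}_{K^m})_+$. But $\varprojlim_n\mathscr{O}_{K^m}/p^n\mathscr{O}_{K^m}$ is by construction the ring of integers $\mathscr{O}_{\mathscr{C}_{K^m}}$ of the $p_K$-adic completion $\mathscr{C}_{K^m}$ of $K^m$, and inverting $p$ recovers $\mathscr{C}_{K^m}=\mathscr{O}_{\mathscr{C}_{K^m}}[1/p]$. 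The only step demanding genuine care is the local identification $p^{-\varepsilon_K}\log(U_L(\varepsilon_K e_L))=\mathscr{O}_L$: the point is that the twist $\varepsilon_K e_L$ is chosen exactly so that it lies above $e_L/(p-1)$ (making $\log$ an isomorphism onto $\mathfrak{p}_L^{\varepsilon_K e_L}$) \emph{and} equals an integral multiple $\varepsilon_K$ of $e_L$, so that $\mathfrak{p}_L^{\varepsilon_K e_L}=p^{\varepsilon_K}\mathscr{O}_L$ and rescaling by $p^{-\varepsilon_K}$ returns the full ring of integers rather than a proper fractional ideal; everything else is a formal passage to limits.
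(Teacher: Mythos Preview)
Your proof is correct and follows essentially the same route as the paper: identify $U'(H)$ with $G_L^{\ab}(\varepsilon_K e_L)$, transport via $\Art_L^{-1}$ and $\log$ to $\mathfrak{p}_L^{\varepsilon_K e_L}=p_K^{\varepsilon_K}\mathscr{O}_L$, rescale by $p_K^{-\varepsilon_K}$, and pass to the colimit. You additionally spell out the completion step for $(\mathscr{C}_{k^m})_+$, which the paper leaves implicit in the phrase ``In particular''.
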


Speaking from an intuitive level, the $G_K^{m + 1}$-module $(\mathscr{C}_{K^m})_+$ can be recovered entirely group-theoretically from the \emph{filtered} profinite group $G_K^{m + 1}$.

\begin{proof}
    Let $H_\square = \Gal (K^{m + 1} / L_\square) \in \mathscr{H}_m (G_K^{m + 1})$ for each $\square \in \left\{ 1, 2 \right\}$, and assume that $H_1 \supseteq H_2$.
    By construction, we have $U' (H_\square) = G_{L_\square}^\ab(\varepsilon_K e_{L_\square})$.
    Under the isomorphism $\Art_{L_\square}^{-1} \colon G_{L_\square}^\ab (0+) \to U_{L_\square} (1)$, the subgroup $G_{L_\square}^\ab (\varepsilon_K e_{L_\square})$ is mapped onto $U_{L_\square} (\varepsilon_K e_{L_\square})$, which is again mapped onto $\mathfrak{p}_{L_\square}^{\varepsilon_K e_{L_\square}} = p_K^{\varepsilon_K} (\mathscr{O}_{L_\square})_+$ under the $p_K$-adic logarithm
    \begin{equation*}
U_{L_\square} (1) \otimes_{\ZZ_{p_K}} \QQ_{p_K}
        \xrightarrow{\cong, \log}
            (L_{\square})_+.
    \end{equation*}
    Hence we have the commutative diagram
    \begin{equation*}
        \begin{tikzcd}[column sep=3.5em]
            {G_{L_1}^\ab(\varepsilon_K e_{L_1})} & {U_{L_1}(\varepsilon_K e_{L_1})} & {p_K^{\varepsilon_K} (\mathscr{O}_{L_1})_+} \\
            {G_{L_2}^\ab(\varepsilon_K e_{L_2})} & {U_{L_2}(\varepsilon_K e_{L_2})} & {p_K^{\varepsilon_K} (\mathscr{O}_{L_2})_+}
            \arrow["{{{\cong,\Art_{L_1}^{-1}}}}", from=1-1, to=1-2]
            \arrow["{{{{{V'_{1,2}}}}}}", from=1-1, to=2-1]
            \arrow["{{{{{\cong, \log}}}}}", from=1-2, to=1-3]
            \arrow["\subseteq", from=1-2, to=2-2]
            \arrow["\subseteq", from=1-3, to=2-3]
            \arrow["{{{\cong,\Art_{L_2}^{-1}}}}", from=2-1, to=2-2]
            \arrow["{{{{{\cong,\log}}}}}", from=2-2, to=2-3]
        \end{tikzcd}
    \end{equation*}
    of $G_K^{m + 1}$-modules, which is compatible with the above commmutative diagram \eqref{equation:2025.10.15.11.35.10}.
    By passage to the limit, we see that the isomorphism of \cref{proposition:2025.10.15.11.35.03} restricts to the isomorphism
    \begin{equation*}
p_K^{\varepsilon_K} \cdot (\mathscr{O}_{k^m})_+ (G_K^{m + 1})
            =
                \varinjlim_{L / K} G_{L}^\ab (\varepsilon_K e_{L}) \,\,\,
        \xrightarrow{\cong} \,\,\,
p_K^{\varepsilon_K} (\mathscr{O}_{K^m})_+
            =
                \varinjlim_{L / K} p_K^{\varepsilon_K} (\mathscr{O}_L)_+,
    \end{equation*}
    where $L / K$ runs through the finite Galois subextensions of $K^{m} / K$.
    Therefore, we obtain the desired isomorphism, by multiplying both sides by $p_K^{-\varepsilon_K}$.
\end{proof}

\section{Abelian $p$-adic representations}
\label{section:2025.10.15.11.35.27}

Let $\ell$ be a prime number, $(\rho, V)$ an \emph{$\ell$-adic representation} of a profinite group $G$, i.e., a finite-dimensional $\QQ_\ell$-vector space $V$ equipped with a \emph{continuous} group homomorphism $\rho \colon G \to \Aut_{\QQ_\ell} (V)$.
We shall write $\rho_s$ for $\rho (s)$.
For an $\ell$-adic character $\chi \colon G \to \ZZ_\ell^\times$, we shall write $(\rho (\chi), V (\chi) \coloneqq V)$ for the $\ell$-adic representation of $G$ on which the $G$-action is defined by
\begin{equation*}
    {\rho (\chi)}_\sigma (v)
    \coloneqq
        \chi (\sigma) \cdot \rho_\sigma (v),
\end{equation*}
for $\sigma \in G$ and $v \in V (\chi)$.

\subsection*{Hodge-Tate numbers}

Let $K$ be a mixed-characteristic local field.
Recall that, for a $p_K$-adic representation $(\rho, V)$ of $G_K$ and an integer $i$,
the $i$\textsuperscript{th} \emph{Hodge-Tate number} $d^i_{\HT, K} (\rho, V)$ of $(\rho, V)$ is the dimension of the $K$-vector space
\begin{equation*}
    \left(
        \CC_{p_K} \otimes_{\QQ_{p_K}} V (-i)
    \right)^{G_K},
\end{equation*}
where $V (-i) = (\rho (-i), V (-i))$ denotes the $(-i)$\textsuperscript{th} Tate twist $V (\chi_K^{-i})$ of $V$.
From the theory of $p$-adic representations, it is known that
\begin{equation*}
\sum_{i \in \ZZ} d^i_{\HT, K} (\rho, V)
    \leq
        \dim_{\QQ_{p_K}} (V),
\end{equation*}
and we say that $(\rho, V)$ is \emph{Hodge-Tate} when the equality holds (cf. \cite[\S5.1]{FontaineOuyang}).

\begin{lemma}
    \label{lemma:2025.10.15.11.35.34}
    Let $(\rho, V)$ be a $p_K$-adic representation of $G_K$.
    Then we have
    \begin{equation}
        \label{equation:2025.10.15.11.35.39}
\left(
                \CC_{p_K} \otimes_{\QQ_{p_K}} V
            \right)^{G_K}
        =
            \left(
                \mathscr{C} (({K^\alg})^{\Ker(\rho)}) \otimes_{\QQ_{p_K}} V
            \right)^{G_K}.
    \end{equation}
\end{lemma}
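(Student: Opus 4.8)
The plan is to reduce the identity to the Ax--Sen--Tate theorem, which computes the invariants of $\CC_{p_K}$ under an arbitrary closed subgroup of $G_K$. Set $H \coloneqq \Ker(\rho)$. Since $\rho$ is continuous and $\Aut_{\QQ_{p_K}}(V)$ is Hausdorff, the subgroup $H$ is closed (and, being a kernel, normal) in $G_K$. Write $M \coloneqq (K^\alg)^{H}$ for its fixed field, so that under the Galois correspondence $H = \Gal(K^\alg / M)$ and $M \subseteq K^\alg$. The single nonformal input will be the Ax--Sen--Tate identification $\CC_{p_K}^{H} = \mathscr{C}(M)$ (cf.\ \cite[\S5.1]{FontaineOuyang}); everything else is bookkeeping around the fact that $H$ acts trivially on $V$.

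One inclusion is immediate and requires no completed-field input. From $M \subseteq K^\alg$ we get $\mathscr{C}(M) \subseteq \CC_{p_K}$, and hence an inclusion $\mathscr{C}(M) \otimes_{\QQ_{p_K}} V \hookrightarrow \CC_{p_K} \otimes_{\QQ_{p_K}} V$ of $G_K$-modules. Passing to $G_K$-invariants yields the inclusion ``$\supseteq$'' in \eqref{equation:2025.10.15.11.35.39}.

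For the reverse inclusion I would first compute the $H$-invariants of the left-hand side. Because $H = \Ker(\rho)$ acts trivially on $V$, a choice of $\QQ_{p_K}$-basis of the finite-dimensional space $V$ decomposes $\CC_{p_K} \otimes_{\QQ_{p_K}} V$ into a finite direct sum of copies of $\CC_{p_K}$ on which $H$ acts through the $\CC_{p_K}$-factor alone; since invariants commute with finite direct sums, this gives $(\CC_{p_K} \otimes_{\QQ_{p_K}} V)^{H} = \CC_{p_K}^{H} \otimes_{\QQ_{p_K}} V$, which equals $\mathscr{C}(M) \otimes_{\QQ_{p_K}} V$ by Ax--Sen--Tate. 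Now any element of $(\CC_{p_K} \otimes_{\QQ_{p_K}} V)^{G_K}$ is in particular $H$-invariant, hence already lies in $\mathscr{C}(M) \otimes_{\QQ_{p_K}} V$; being moreover $G_K$-invariant, it lies in $(\mathscr{C}(M) \otimes_{\QQ_{p_K}} V)^{G_K}$. This establishes ``$\subseteq$'' and completes the proof.

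The crux is thus the identification $\CC_{p_K}^{H} = \mathscr{C}(M)$ for the (generally \emph{infinite}) algebraic extension $M/K$: this is the only step where the topology and the completed nature of $\CC_{p_K}$ intervene, and it is exactly the content handled by the form of Ax--Sen--Tate valid for arbitrary closed subgroups. I anticipate no further obstacle, as the remaining manipulations are formal consequences of the trivial $H$-action on $V$ and the finite-dimensionality of $V$.
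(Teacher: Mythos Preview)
Your proof is correct and follows essentially the same approach as the paper: both arguments show that any $G_K$-invariant element of $\CC_{p_K}\otimes_{\QQ_{p_K}}V$ is already $\Ker(\rho)$-invariant, then invoke Ax--Sen--Tate to identify $\CC_{p_K}^{\Ker(\rho)}$ with $\mathscr{C}((K^\alg)^{\Ker(\rho)})$. The paper phrases the computation of $H$-invariants via an explicit matrix calculation in a chosen basis, while you state it as the abstract identity $(\CC_{p_K}\otimes V)^H=\CC_{p_K}^H\otimes V$; these are the same step.
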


\begin{proof}
    We choose a basis $v_1, \dots, v_n$ of $V$.
    For each $\sigma \in G_K$, we shall write $(a_{ij} (\sigma)) \in \GL_n (\QQ_{p_K})$ for the matrix of the linear transformation $\rho_\sigma$ with respect to the basis $v_1, \dots, v_n$, so that
    \begin{equation*}
(\rho_\sigma (v_1) \, \cdots \, \rho_\sigma (v_n))
        =
            (v_1 \, \cdots \, v_n) (a_{ij} (\sigma)).
    \end{equation*}

    Suppose that $c_1, \dots, c_n \in \CC_{p_K}$, and that $c_1 \otimes v_1 + \dots + c_n \otimes v_n$ belongs to the left hand side of \eqref{equation:2025.10.15.11.35.39}.
    Then for all $\sigma \in G_K$,
    \begin{align*}
c_1 \otimes v_1 + \dots + c_n \otimes v_n
        &=
            \sigma (c_1) \otimes \rho_\sigma (v_1)
            +
            \dots
            +
            \sigma (c_n) \otimes \rho_\sigma (v_n) \\
        &=
            \left(
                \sum_{j = 1}^n \sigma(c_j) a_{1j} (\sigma)
            \right) \otimes v_1
            +
            \dots
            +
            \left(
                \sum_{j = 1}^n \sigma(c_j) a_{nj} (\sigma)
            \right) \otimes v_n,
    \end{align*}
    and hence
    \begin{equation*}
\begin{pmatrix}
                c_1 \\
                \vdots \\
                c_n
            \end{pmatrix}
        =
            (a_{ij} (\sigma))
            \begin{pmatrix}
                \sigma (c_1) \\
                \vdots \\
                \sigma (c_n)
            \end{pmatrix}.
    \end{equation*}
    In particular, we have
    \begin{equation*}
c_1 \otimes v_1 + \dots + c_n \otimes v_n
        \in
\CC_{p_K}^{\Ker (\rho)}
                \otimes_{\QQ_{p_K}}
                V
            =
                \mathscr{C} ({(K^\alg)^{\Ker(\rho)}})
                \otimes_{\QQ_{p_K}}
                V,
    \end{equation*}
    since it holds that $\sigma (c_1) = c_1, \dots, \sigma (c_n) = c_n$ for all $\sigma \in \Ker (\rho)$.
    (Note that, for any closed subgroup $H$ of $G_K$,
    \begin{equation*}
\CC_{p_K}^{H}
        =
            \mathscr{C} ((K^\alg)^H)
    \end{equation*}
    by the theorem of Ax-Sen-Tate---cf. \cites{Tate1967}{Ax1970}[Chap. 3]{FontaineOuyang}.)
\end{proof}

\begin{definition}
    \label{definition:2025.10.15.11.35.50}
    Let $G$ be a profinite group, and let $(\rho, V)$ be an $\ell$-adic representation of $G$ for a prime number $\ell$.
    We shall say that $(\rho, V)$ is an \emph{$m$-step solvable} $\ell$-adic representation of $G$ for an integer $m \geq 0$ if $\rho$ annihilates $G^{[m]}$.
    We shall also say that $(\rho, V)$ is \emph{abelian} if it is $1$-step solvable.
\end{definition}

Let $G$ be a filtered profinite group of $\MLF^{m + 1}$-type for an integer $m \geq 1$; we set $p \coloneqq p (G^\ab)$.
Let $(\rho, V)$ and $\chi$ be a $p$-adic representation and a $p$-adic character of $G$, respectively.
We shall write
\begin{equation*}
d_{\HT, m}^i (G, \chi, \rho, V)
    \coloneqq
        \dim_{\QQ_p} \left(
            (\mathscr{C}_{k^m})_+ (G)
            \otimes_{\QQ_p}
            V (\chi^{-i})
        \right)^G / d(G^\ab)
\end{equation*}
for each $i \in \ZZ$.

\begin{proposition}
    \label{proposition:2025.10.15.11.36.01}
    Let $K$ be a mixed-characteristic local field, and let $m$ be an integer $\geq 1$.
    Given an \emph{$m$-step solvable} $p_K$-adic representation $(\rho, V)$ of $G_K$, it holds that
    \begin{equation*}
d^i_{\HT, m} (G_K^{m + 1}, \chi (G_K^2), \rho, V)
        =
            d^i_{\HT, K} (\rho, V)
    \end{equation*}
    for each $i \in \ZZ$.
\end{proposition}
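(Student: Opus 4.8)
The plan is to unwind the group-theoretic quantity $d^i_{\HT,m}(G_K^{m+1}, \chi(G_K^2), \rho, V)$ into the honest Hodge-Tate number by transporting each ingredient across the reconstruction results of the previous sections. First I would record that, since $(\rho, V)$ is $m$-step solvable, $\rho$ kills $G_K^{[m]}$ and hence factors through $G_K^m$, a fortiori through $G_K^{m + 1}$; so it is legitimate to regard $(\rho, V)$ as a $p_K$-adic representation of $G_K^{m + 1}$. Next, \cref{proposition:2025.10.15.11.34.21} gives $d(G_K^\ab) = d_K = [K : \QQ_{p_K}]$; \cref{proposition:2025.10.15.11.34.35} shows that $\chi(G_K^2)$ pulls back to the $p_K$-adic cyclotomic character $\chi_K$ along $G_K \twoheadrightarrow G_K^2$; and \cref{proposition:2025.10.15.11.38.37} supplies a $G_K^{m + 1}$-equivariant isomorphism $(\mathscr{C}_{k^m})_+ (G_K^{m + 1}) \cong (\mathscr{C}_{K^m})_+$. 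Writing $W \coloneqq V (\chi_K^{-i})$ for the Tate twist, these three facts identify the numerator of the group-theoretic expression with $\dim_{\QQ_{p_K}} (\mathscr{C}_{K^m} \otimes_{\QQ_{p_K}} W)^{G_K^{m + 1}}$ and the denominator with $d_K$.

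The heart of the argument is to replace $\mathscr{C}_{K^m}$ by the full $\CC_{p_K}$ without changing the invariants, and this is where $m$-step solvability is used essentially. I would first check that $W$ is again $m$-step solvable: since $\chi_K$ is abelian it annihilates $G_K^{[1]} \supseteq G_K^{[m]}$ (using $m \geq 1$), so for $\sigma \in G_K^{[m]}$ we have both $\chi_K^{-i} (\sigma) = 1$ and $\rho_\sigma = \id$, whence $\rho (\chi_K^{-i})$ kills $G_K^{[m]}$. Consequently $\Ker (\rho (\chi_K^{-i})) \supseteq G_K^{[m]}$, so $(K^\alg)^{\Ker (\rho (\chi_K^{-i}))} \subseteq (K^\alg)^{G_K^{[m]}} = K^m$, and passing to $p_K$-adic completions gives $\mathscr{C} ((K^\alg)^{\Ker}) \subseteq \mathscr{C}_{K^m} \subseteq \CC_{p_K}$. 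Sandwiching the corresponding $G_K$-invariants and invoking \cref{lemma:2025.10.15.11.35.34} (Ax--Sen--Tate), which identifies the outermost term with the innermost, forces
\begin{equation*}
    (\mathscr{C}_{K^m} \otimes_{\QQ_{p_K}} W)^{G_K}
    =
    (\CC_{p_K} \otimes_{\QQ_{p_K}} W)^{G_K}.
\end{equation*}

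Finally I would assemble the pieces. Because both the $G_K$-action on $\mathscr{C}_{K^m}$ (which factors through $\Gal (K^m / K) = G_K^m$) and the action on $W$ factor through $G_K^m$, the $G_K^{m + 1}$-invariants, the $G_K^m$-invariants, and the $G_K$-invariants of $\mathscr{C}_{K^m} \otimes W$ all coincide; combined with the displayed equality this rewrites the numerator as $\dim_{\QQ_{p_K}} (\CC_{p_K} \otimes_{\QQ_{p_K}} W)^{G_K}$. Since $(\CC_{p_K} \otimes W)^{G_K}$ is a $K$-vector space (as $\CC_{p_K}^{G_K} = K$ by Ax--Sen--Tate) and $\dim_{\QQ_{p_K}} = d_K \cdot \dim_K$, dividing by $d_K = d (G_K^\ab)$ yields exactly $\dim_K (\CC_{p_K} \otimes_{\QQ_{p_K}} V (-i))^{G_K} = d^i_{\HT, K} (\rho, V)$. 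The only genuinely non-formal step is the descent from $\CC_{p_K}$ to $\mathscr{C}_{K^m}$: it is precisely the $m$-step solvability hypothesis that confines the relevant invariants to the completion of $K^m$, so that the reconstructed module $(\mathscr{C}_{k^m})_+$ — which only sees $K^m$, not all of $K^\alg$ — suffices to compute the Hodge-Tate numbers.
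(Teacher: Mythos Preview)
Your proposal is correct and follows essentially the same approach as the paper: both arguments translate the group-theoretic quantity via \cref{proposition:2025.10.15.11.34.21,proposition:2025.10.15.11.34.35,proposition:2025.10.15.11.38.37}, then invoke the sandwich $\mathscr{C}((K^\alg)^{\Ker(\rho(-i))}) \subseteq \mathscr{C}_{K^m} \subseteq \CC_{p_K}$ together with \cref{lemma:2025.10.15.11.35.34} to collapse the invariants. You are slightly more explicit than the paper in justifying that $V(-i)$ remains $m$-step solvable and that $G_K^{m+1}$-invariants coincide with $G_K$-invariants, but these are details the paper leaves implicit rather than genuine differences in strategy.
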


Intuitively speaking, the $i$\textsuperscript{th} \emph{Hodge-Tate number} of $(\rho, V)$ (and hence the issue of \emph{whether or not $(\rho, V)$ is Hodge-Tate}) can be determined group-theoretically from the \emph{filtered} profinite group $G_K^{m + 1}$ and its action on $V$, if $(\rho, V)$ is an \emph{$m$-step solvable} representation of $G_K$.

\begin{proof}
    We have
    \begin{align*}
d^i_{\HT, m} (G_K^{m + 1}, \chi (G_K^2), \rho, V)
        &=
            \dim_{\QQ_{p_K}} \left(
                \mathscr{C} (K^m) \otimes_{\QQ_{p_K}} V (-i)
            \right)^{G_K} / d_K \\
        &=
            \dim_{K} \left(
                \mathscr{C} (K^m) \otimes_{\QQ_{p_K}} V (-i)
            \right)^{G_K}
    \end{align*}
    from \cref{proposition:2025.10.15.11.34.21,proposition:2025.10.15.11.34.35,proposition:2025.10.15.11.38.37}.
    Since $\rho$ and $\chi_K$ annihilates $G_K^{[m]}$, we see that
    \begin{equation*}
\left(
\CC_{p_K}
                \otimes_{\QQ_{p_K}}
                    V (-i)
            \right)^{G_K}
        \supseteq
            \left(
\mathscr{C} (K^m)
                \otimes_{\QQ_{p_K}}
                    V (-i)
            \right)^{G_K}
        \supseteq
            \left(
\mathscr{C} (({K}^{\alg})^{\Ker (\rho (-i))})
                \otimes_{\QQ_{p_K}}
                    V (-i)
            \right)^{G_K},
    \end{equation*}
    and deduce the desired equality from \cref{lemma:2025.10.15.11.35.34}.
\end{proof}

\subsection*{Uniformizing representations}

Let $(\rho, V)$ be a $p_K$-adic representation of $G_K$ for a mixed-characteristic local field $K$, and $E / K$ a finite extension such that $E / \QQ_{p_K}$ is Galois.
Suppose that $V$ is an $E$-vector space of dimension $1$ and the $G_K$-action on $V$ is $E$-linear.
Then we see that $\rho \colon G_K \to \Aut_{\QQ_{p_K}} (V)$ factors through $\rho \colon G_K^\ab \to E^\times$.
In particular, $(\rho, V)$ is an \emph{abelian} representation.
We say that a representation $(\rho, V)$ of this type is \emph{uniformizing} if there exist an open subgroup $I \subseteq U_K$ and a field homomorphism $\iota \colon K \to E$ such that
\begin{equation*}
(\rho \circ \Art_K) \vert_I
    =
        \iota^\times \vert_I,
\end{equation*}
where $\iota^\times \colon K^\times \to E^\times$ is the group homomorphism induced by $\iota$.

\begin{example}
    \label{example:2025.10.15.11.36.19}
    Given any finite extension $E / K$ such that $E / \QQ_{p_K}$ is Galois,
    $V = E_+$ can be regarded as a uniformizing representation by local class field theory.
    More precisely, we define the $G_K$-action on $V$ via the below composition
    \begin{equation*}
        \rho
        \colon \,\,\,
            G_K^\ab \,\,\,
        \xrightarrow{\Ver} \,\,\,
            G_E^\ab \, (
                \cong
                    G_E^\ab (0) \oplus \Gal (E^\nr / E)
            ) \,\,\,
        \twoheadrightarrow \,\,\,
            G_E^\ab (0) \,\,\,
        \to \,\,\,
            U_E
    \end{equation*}
    of continuous homomorphisms, where the second (resp. third) arrow is the natural surjection (resp. the isomorphism $\Art_E^{-1}$).
\end{example}

\begin{proposition}
    \label{proposition:2025.10.15.11.36.28}
    Let $K$ be a mixed-characteristic local field, and let $E / K$ be a finite extension such that $E / \QQ_{p_K}$ is Galois.
    Suppose that $(\rho, V)$ is an $E$-linear representation of $G_K$, of $E$-dimension $1$.
    Then $(\rho, V)$ is a uniformizing representation if and only if
    \begin{equation*}
d_{\HT, K}^i (\rho, V)
        =
            \begin{cases*}
                [E : K] ([K : \QQ_{p_K}] - 1) & $i = 0$ \\
                [E : K] & $i = 1$ \\
            \end{cases*}.
    \end{equation*}
\end{proposition}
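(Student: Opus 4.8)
The plan is to compute both Hodge--Tate numbers directly, by splitting the $\CC_{p_K}$-semilinear representation $\CC_{p_K}\otimes_{\QQ_{p_K}}V$ over the embeddings of the coefficient field $E$ and reading off the uniformizing condition from the resulting Sen weights. Since $E/\QQ_{p_K}$ is Galois (in particular étale), I would first decompose
\[
\CC_{p_K}\otimes_{\QQ_{p_K}}V \;\cong\; \bigoplus_{\tau\colon E\hookrightarrow\CC_{p_K}} \CC_{p_K}(\tau\circ\rho),
\]
where $\tau$ runs over the $[E:\QQ_{p_K}]$ embeddings and $\CC_{p_K}(\tau\circ\rho)$ is the line twisted by the character $\tau\circ\rho\colon G_K\to\CC_{p_K}^\times$. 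Tensoring with $\chi_K^{-i}$ and taking $G_K$-invariants, the theorem of Tate--Sen on $H^0(G_K,\CC_{p_K}(\eta))$ shows that each summand contributes a one-dimensional $K$-vector space exactly when $(\tau\circ\rho)\chi_K^{-i}$ has vanishing Sen weight, and $0$ otherwise. Hence, normalising so that $\chi_K$ has Hodge--Tate weight $1$,
\[
d^i_{\HT,K}(\rho,V)=\#\{\tau\colon E\hookrightarrow\CC_{p_K}\mid \text{the Sen weight of }\tau\circ\rho\text{ equals }i\}.
\]

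Next I would compute these Sen weights through local class field theory. Via $\Art_K$ the character $\tau\circ\rho$ corresponds to $\tau\circ\psi$ on $K^\times$, where $\psi\coloneqq\rho\circ\Art_K\colon K^\times\to E^\times$, and the Sen weight of $\tau\circ\rho$ is the coefficient of the \emph{distinguished} embedding $K\hookrightarrow\CC_{p_K}$ in the locally algebraic description of $(\tau\circ\psi)\vert_{U_K}$. For the forward implication, the uniformizing hypothesis supplies an open $I\subseteq U_K$ and a field embedding $\iota\colon K\to E$ with $\psi\vert_I=\iota^\times\vert_I$, so $(\tau\circ\psi)\vert_I=(\tau\circ\iota)\vert_I$ is a single embedding, whose Sen weight is $1$ if $\tau\circ\iota$ is the distinguished embedding and $0$ otherwise. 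Exactly $[E:\iota(K)]=[E:K]$ of the embeddings $\tau$ restrict to the distinguished embedding on $\iota(K)$, giving $d^1_{\HT,K}=[E:K]$ and $d^0_{\HT,K}=[E:\QQ_{p_K}]-[E:K]=[E:K]([K:\QQ_{p_K}]-1)$, with all other $d^i_{\HT,K}$ vanishing, as required.

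For the converse I would run the same computation backwards. The prescribed numbers satisfy $\sum_i d^i_{\HT,K}=[E:\QQ_{p_K}]=\dim_{\QQ_{p_K}}V$, so $(\rho,V)$ is Hodge--Tate; as it is abelian, $\psi\vert_{U_K}$ is then locally algebraic (Serre--Sen), say $\psi(u)=\prod_{\lambda\colon K\hookrightarrow E}\lambda(u)^{n_\lambda}$ on an open subgroup, with $n_\lambda\in\ZZ$ (every embedding of $K$ lands in $E$ because $E/\QQ_{p_K}$ is normal). Tracking the distinguished embedding through each $\tau$ as above yields $d^i_{\HT,K}=[E:K]\cdot\#\{\lambda\mid n_\lambda=i\}$, and comparison with the given values forces a single embedding $\iota$ with $n_\iota=1$ and $n_\lambda=0$ for every other $\lambda$. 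Thus $\psi\vert_I=\iota^\times\vert_I$ on an open $I\subseteq U_K$, which is precisely the uniformizing condition.

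\textbf{The hard part.} The conceptual input is routine $p$-adic Hodge theory; the genuine difficulty is the bookkeeping, namely pinning down the normalisations of $\Art_K$ and $\chi_K$ so that the Sen weight of a locally algebraic character really reads off the coefficient of the \emph{distinguished} embedding cutting out $\CC_{p_K}$, and checking that as $\tau$ varies this coefficient sweeps through each exponent $n_\lambda$ with multiplicity exactly $[E:K]$; in the converse one must additionally be careful to invoke the local algebraicity of abelian Hodge--Tate characters. A concrete alternative that avoids Sen theory would be to compare $(\rho,V)$ with the prototype uniformizing representation $V=E_+$ of \Cref{example:2025.10.15.11.36.19} and compute its Hodge--Tate numbers explicitly via the $p_K$-adic logarithm and Lubin--Tate theory.
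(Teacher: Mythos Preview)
Your proposal is correct and, in fact, gives considerably more detail than the paper itself: the paper's ``proof'' of this proposition is simply a citation, \emph{cf.}~Serre, \emph{Abelian $\ell$-adic representations}, Chap.~III, A5, and Mochizuki~\cite[\S3]{Mochizuki1997}. Your argument is essentially the content of those references: the decomposition of $\CC_{p_K}\otimes_{\QQ_{p_K}}V$ over embeddings $\tau\colon E\hookrightarrow\CC_{p_K}$, the identification of each $d^i_{\HT,K}$ with a count of embeddings via Tate's computation of $H^0(G_K,\CC_{p_K}(\eta))$, and, for the converse, the local algebraicity of abelian Hodge--Tate characters. The bookkeeping you flag as ``the hard part''---that for each $\tau$ there is a \emph{unique} $\lambda\colon K\to E$ with $\tau\circ\lambda=\mathrm{id}_K$ (because $E/\QQ_{p_K}$ is normal, so $\tau(E)=E\supseteq K$), and that each fixed $\lambda$ arises from exactly $[E:K]$ choices of $\tau$---is handled correctly in your sketch and is precisely the combinatorics underlying Serre's appendix.
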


\begin{proof}
    cf. \cites[Chap. III, A5]{Serre1968}[\S3]{Mochizuki1997}.
\end{proof}

\begin{corollary}
    \label{corollary:2025.10.15.11.36.37}
    Let $K_\circ$ and $K_\bullet$ be mixed-characteristic local fields, $\alpha_2 \colon G_{K_\circ}^2 \to G_{K_\bullet}^2$ an isomorphism of \emph{filtered} profinite groups.
    Suppose that $E / \QQ_{p_{K_\circ}} (= \QQ_{p_{K_\bullet}})$ is a finite Galois extension containing both $K_\circ$ and $K_\bullet$,
    and that $(\rho, V)$ is an $E$-linear representation of $G_{K_\circ}$, of $E$-dimension $1$.
    Then $(\rho \circ \alpha_1^{-1}, V)$ is a uniformizing representation of $G_{K_\bullet}$ if and only if $(\rho, V)$ is a uniformizing representation of $G_{K_\circ}$,
    where $\alpha_1 \colon G_{K_\circ}^\ab \to G_{K_\bullet}^\ab$ is the isomorphism induced by $\alpha_2$.
\end{corollary}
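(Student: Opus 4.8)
The plan is to reduce the asserted equivalence to an equality of Hodge-Tate numbers and then invoke the group-theoretic computations of the previous sections. Write $p \coloneqq p_{K_\circ} = p_{K_\bullet}$. Since $(\rho, V)$ is $E$-linear of $E$-dimension $1$, it factors through $G_{K_\circ}^\ab$ and is therefore an \emph{abelian} (i.e.\ $1$-step solvable) representation; likewise $(\rho \circ \alpha_1^{-1}, V)$ factors through $G_{K_\bullet}^\ab$ via $\alpha_1^{-1}$ and is abelian. Hence \cref{proposition:2025.10.15.11.36.28} applies to both representations, and it suffices to compare their Hodge-Tate numbers along with the numerical criterion stated there.

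First I would observe that every ingredient entering the group-theoretic Hodge-Tate number $d_{\HT, 1}^i (G_K^2, \chi(G_K^2), \rho, V)$ is functorial with respect to isomorphisms of filtered profinite groups: the $G$-module $(\mathscr{C}_{k^1})_+ (G)$ is constructed purely from the filtered profinite group $G$ (\cref{proposition:2025.10.15.11.38.37}), the character $\chi(G)$ is constructed purely from the profinite group $G$ (\cref{proposition:2025.10.15.11.34.35}), and the integer $d(G^\ab)$ is a group-theoretic invariant (\cref{proposition:2025.10.15.11.34.21}). Consequently the filtered isomorphism $\alpha_2$ carries $(\mathscr{C}_{k^1})_+ (G_{K_\circ}^2)$ onto $(\mathscr{C}_{k^1})_+ (G_{K_\bullet}^2)$ compatibly with the $G$-actions and with the respective reconstructed cyclotomic characters, and it carries the inflated representation $(\rho, V)$ of $G_{K_\circ}^2$ to precisely $(\rho \circ \alpha_1^{-1}, V)$ of $G_{K_\bullet}^2$ (using that $\alpha_2$ induces $\alpha_1$ on abelianizations, through which both representations factor). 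Since taking $G_{K_\circ}^2$-invariants on one side corresponds under $\alpha_2$ to taking $G_{K_\bullet}^2$-invariants on the other, the two group-theoretic quantities agree:
\[
d_{\HT, 1}^i (G_{K_\circ}^2, \chi(G_{K_\circ}^2), \rho, V) = d_{\HT, 1}^i (G_{K_\bullet}^2, \chi(G_{K_\bullet}^2), \rho \circ \alpha_1^{-1}, V)
\]
for every $i \in \ZZ$.

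Next I would apply \cref{proposition:2025.10.15.11.36.01} (with $m = 1$) to each field separately to identify these group-theoretic quantities with the genuine Hodge-Tate numbers $d_{\HT, K_\circ}^i (\rho, V)$ and $d_{\HT, K_\bullet}^i (\rho \circ \alpha_1^{-1}, V)$; combined with the displayed equality this gives $d_{\HT, K_\circ}^i (\rho, V) = d_{\HT, K_\bullet}^i (\rho \circ \alpha_1^{-1}, V)$ for all $i$. It remains to check that the numerical criterion of \cref{proposition:2025.10.15.11.36.28} is identical for both fields. Since $\alpha_1$ preserves the invariant $d(G^\ab)$, \cref{proposition:2025.10.15.11.34.21} yields $[K_\circ : \QQ_p] = d_{K_\circ} = d_{K_\bullet} = [K_\bullet : \QQ_p]$, and because $E$ is fixed this forces $[E : K_\circ] = [E : K_\bullet]$. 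Hence the target values $[E:K]([K:\QQ_p] - 1)$ (for $i = 0$) and $[E:K]$ (for $i = 1$) coincide for $K = K_\circ$ and $K = K_\bullet$. Therefore $(\rho, V)$ meets the criterion if and only if $(\rho \circ \alpha_1^{-1}, V)$ does, which is the asserted equivalence.

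The main obstacle is conceptual rather than computational: one must verify that transporting $(\rho, V)$ along $\alpha_2$ genuinely produces the twisted representation $(\rho \circ \alpha_1^{-1}, V)$ equipped with the \emph{correct} cyclotomic normalization, i.e.\ that $\alpha_2$ matches $\chi(G_{K_\circ}^2)$ with $\chi(G_{K_\bullet}^2)$. This is exactly where the group-theoretic reconstruction of the cyclotomic character in \cref{proposition:2025.10.15.11.34.35} is indispensable; once the functoriality of all three ingredients is in place, the rest is bookkeeping.
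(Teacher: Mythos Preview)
Your proposal is correct and follows exactly the route the paper intends: invoke \cref{proposition:2025.10.15.11.36.01} (with $m=1$) to transport Hodge-Tate numbers along $\alpha_2$, then apply the numerical criterion of \cref{proposition:2025.10.15.11.36.28}. You have simply spelled out in full the details---functoriality of the reconstructed ingredients, and the equality $[E:K_\circ]=[E:K_\bullet]$ via $d_{K_\circ}=d_{K_\bullet}$---that the paper leaves implicit in its one-line proof.
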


\begin{proof}
    It follows immediately from \cref{proposition:2025.10.15.11.36.28,proposition:2025.10.15.11.36.01}.
\end{proof}
 
\section{Proofs of the main theorems}
\label{section:2025.10.15.11.36.45}

\begin{lemma}[{\cite[Lemma 4.1]{Mochizuki1997}}]
    \label{lemma:2025.10.15.11.36.50}
    Let $K$ be a mixed-characteristic local field, and $I$ an open subgroup of $U_K$.
    Then the sub-$\QQ_{p_K}$-vector space generated by $I$ in $K$ equals $K$.
\end{lemma}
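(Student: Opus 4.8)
The plan is to exploit the fact that an open subgroup of $U_K$ must contain a full higher unit group, and then to \emph{linearize} the multiplicative structure of that unit group by taking differences inside $K$.

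First I would observe that, since the higher unit groups $U_K (n) = 1 + \mathfrak{p}_K^n$ form a fundamental system of open neighborhoods of the identity in $U_K$, the openness of $I$ forces $I \supseteq 1 + \mathfrak{p}_K^n$ for some integer $n \geq 1$. As the $\QQ_{p_K}$-span of $I$ contains the $\QQ_{p_K}$-span of $1 + \mathfrak{p}_K^n$, it suffices to show that the latter already equals $K$.

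Next I would pass from the multiplicative set $1 + \mathfrak{p}_K^n$ to the additive group $\mathfrak{p}_K^n$: for any $x, y \in \mathfrak{p}_K^n$, both $1 + x$ and $1 + y$ lie in $1 + \mathfrak{p}_K^n \subseteq I$, so their difference $(1 + x) - (1 + y) = x - y$ lies in the $\QQ_{p_K}$-span of $I$. Taking $y = 0$ shows that this span contains every element of $\mathfrak{p}_K^n$. It then remains to check that $\mathfrak{p}_K^n$ spans $K$ over $\QQ_{p_K}$: writing $\mathfrak{p}_K^n = \pi^n \mathscr{O}_K$ for a uniformizer $\pi$ and choosing a $\ZZ_{p_K}$-basis $\omega_1, \dots, \omega_{d_K}$ of $\mathscr{O}_K$ (which is simultaneously a $\QQ_{p_K}$-basis of $K$), the elements $\pi^n \omega_1, \dots, \pi^n \omega_{d_K} \in \mathfrak{p}_K^n$ again form a $\QQ_{p_K}$-basis of $K$, since multiplication by $\pi^n \in K^\times$ is an isomorphism. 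This yields the claim.

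There is essentially no serious obstacle here; the only step requiring a moment's thought is the linearization observation that differences of elements of $1 + \mathfrak{p}_K^n$ recover the entire additive group $\mathfrak{p}_K^n$, after which the conclusion is immediate from the elementary structure of $\mathscr{O}_K$ as a free $\ZZ_{p_K}$-module of rank $d_K$.
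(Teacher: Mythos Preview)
Your proof is correct. It differs from the paper's argument, however, so a brief comparison is in order.

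The paper proceeds purely topologically: letting $W$ denote the $\QQ_{p_K}$-span of $I$, one notes that $U_K$ is open in $K$, hence $I$ is open in $K$, and since $w + I \subseteq W$ for every $w \in W$, the subspace $W$ is itself open in $K$. Then $K/W$ is a \emph{discrete} finite-dimensional $\QQ_{p_K}$-vector space, forcing $K/W = 0$.

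You instead reduce to $I = 1 + \mathfrak{p}_K^n$ using the standard neighborhood basis of $1$ in $U_K$, then linearize by taking differences $(1+x)-(1+0)=x$ to land in $\mathfrak{p}_K^n$, and finish with an explicit $\QQ_{p_K}$-basis of $K$ sitting inside $\mathfrak{p}_K^n$. Your route is a bit more hands-on and avoids the quotient-topology step; the paper's argument is slicker in that it never needs to name $U_K(n)$ or a basis of $\mathscr{O}_K$, and would apply verbatim to any open subset of $K$ in place of $I$. Both are perfectly adequate for this lemma.
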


\begin{proof}
    We denote by $W$ the sub-$\QQ_{p_K}$-vector space generated by $I$ in $K$.
    First, observe that $I$ is an open subset of $K$, since $U_K$ is open in $K$.
    Then note that, for each $w \in W$, $w + I \subseteq W$; hence $W$ is also an open subgroup of $K$.
    Therefore, the $\QQ_{p_K}$-vector space $K/W$ ($\cong \QQ_{p_K}^{\oplus d}$, where $d = d_K - \dim_{\QQ_{p_K}} (W)$) is discrete, meaning that $d_K = \dim_{\QQ_{p_K}} (W)$.
\end{proof}

\begin{theorem}
    \label{theorem:2025.10.22.02.13.03}
    Assume that there exists an isomorphism
    \begin{equation*}
        \alpha_2
        \colon
            G_{K_\circ}^2
        \overset{\cong}{\to}
            G_{K_\bullet}^2
    \end{equation*}
    of \emph{filtered} profinite groups.
    Then there exists an isomorphism $f \colon K_\circ \to K_\bullet$.
\end{theorem}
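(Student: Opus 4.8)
The plan is to reconstruct a field isomorphism from \emph{uniformizing representations}, following Mochizuki's strategy but feeding in the reconstruction results already established for the filtered group $G_{K}^2$. The point is that a uniformizing $E$-linear character essentially \emph{is} a field embedding $K \hookrightarrow E$, and \cref{corollary:2025.10.15.11.36.37} already guarantees that the property of being uniformizing survives transport along $\alpha_2$; so the task reduces to extracting an embedding from each side and matching their images.

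First I would record the numerical consequences of $\alpha_2$. It induces an isomorphism $\alpha_1 \colon G_{K_\circ}^\ab \to G_{K_\bullet}^\ab$, so \cref{proposition:2025.10.15.11.34.21} gives $p_{K_\circ} = p_{K_\bullet} =: p$ and $d_{K_\circ} = d_{K_\bullet}$. I fix an algebraic closure $\overline{\QQ_p}$ together with embeddings $K_\circ, K_\bullet \hookrightarrow \overline{\QQ_p}$, and take $E / \QQ_p$ to be a finite Galois extension containing both images; write $\iota_\circ \colon K_\circ \hookrightarrow E$ for the chosen inclusion. Via \cref{example:2025.10.15.11.36.19} this yields a uniformizing representation $(\rho, E_+)$ of $G_{K_\circ}$, and by \cref{corollary:2025.10.15.11.36.37} the transported representation $(\rho \circ \alpha_1^{-1}, E_+)$ is a uniformizing representation of $G_{K_\bullet}$. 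It therefore carries an associated field homomorphism $\iota_\bullet \colon K_\bullet \to E$, uniquely determined because any two field homomorphisms agreeing on an open subgroup of $U_{K_\bullet}$ agree everywhere by \cref{lemma:2025.10.15.11.36.50}.

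The heart of the argument is then to show $\iota_\circ (K_\circ) = \iota_\bullet (K_\bullet)$ inside $E$. Since $\alpha_2$ is filtration-preserving and the inertia group is group-theoretic (\cref{lemma:2025.10.15.11.34.55}), $\alpha_1$ carries $G_{K_\circ}^\ab (0)$ onto $G_{K_\bullet}^\ab (0)$, i.e.\ it intertwines the two unit groups through the reciprocity maps. Choosing the open subgroup $I_\circ \subseteq U_{K_\circ}$ on which $(\rho \circ \Art_{K_\circ})\vert_{I_\circ} = \iota_\circ^\times\vert_{I_\circ}$, I transport $\Art_{K_\circ}(I_\circ)$ along $\alpha_1$ to an open subgroup $\Art_{K_\bullet}(I')$ of the inertia of $G_{K_\bullet}^\ab$; unwinding the definitions shows that $\rho \circ \alpha_1^{-1} \circ \Art_{K_\bullet}$ takes values in $\iota_\circ(I_\circ) \subseteq \iota_\circ(K_\circ)$ on $I'$. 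Comparing with $\iota_\bullet$ on the common open subgroup $I' \cap I_\bullet$ and passing to $\QQ_p$-spans via \cref{lemma:2025.10.15.11.36.50} gives $\iota_\bullet(K_\bullet) \subseteq \iota_\circ(K_\circ)$; as both are $\QQ_p$-subspaces of equal dimension $d_{K_\bullet} = d_{K_\circ}$, they coincide. Consequently $f \coloneqq \iota_\bullet^{-1} \circ \iota_\circ \colon K_\circ \to K_\bullet$ is the sought field isomorphism.

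The main obstacle is almost entirely absorbed into \cref{corollary:2025.10.15.11.36.37}, whose proof rests on detecting Hodge-Tate numbers of abelian $p$-adic representations purely from the filtered $G_K^2$ (\cref{proposition:2025.10.15.11.36.01,proposition:2025.10.15.11.36.28}); this is where the filtration, and hence the restriction to the \emph{filtered} structure in the hypothesis, is genuinely used. Granting that, the only delicate remaining point is the image-matching: $\rho \circ \Art_K$ need only agree with $\iota^\times$ on the open subgroup $I$, differing from it by a finite-order character off $I$, so one must restrict to a common open subgroup before taking $\QQ_p$-spans. I expect this bookkeeping, together with the verification that $\alpha_1$ respects the inertia filtration at the abelian level, to be the most technical part of an otherwise direct deduction.
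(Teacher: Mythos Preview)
Your proposal is correct and follows essentially the same route as the paper: build the canonical uniformizing $E$-linear representation on the $\circ$-side via \cref{example:2025.10.15.11.36.19}, transport it along $\alpha_1$ and invoke \cref{corollary:2025.10.15.11.36.37} to extract $\iota_\bullet$, then match images inside $E$ using \cref{lemma:2025.10.15.11.36.50}. The only cosmetic difference is that the paper arranges $\alpha_1(I_\circ)=I_\bullet$ from the outset (by shrinking), so that $\iota_\bullet\vert_{I_\bullet}\circ\alpha_1\vert_{I_\circ}=\iota_\circ\vert_{I_\circ}$ and \cref{lemma:2025.10.15.11.36.50} gives $\iota_\circ(K_\circ)=\iota_\bullet(K_\bullet)$ directly, making your dimension-count step unnecessary.
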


\begin{proof}
    We denote by $\alpha_1$ the isomorphism $G_{K_\circ}^\ab \to G_{K_\bullet}^\ab$ induced by $\alpha_2$.
    We set $p \coloneqq p_{K_\circ} = p_{K_\bullet}$, and choose a finite Galois extension $E / \QQ_{p}$ that contains both $K_\circ$ and $K_\bullet$.
    As we have seen in \cref{example:2025.10.15.11.36.19}, we have the natural uniformizing representation $(\rho_\circ, V \coloneqq E_+)$ of $G_{K_\circ}$; it is clear from \cref{corollary:2025.10.15.11.36.37} that $(\rho_\bullet \coloneqq \rho_\circ \circ \alpha_{1}^{-1}, V)$ is also a uniformizing representation of $G_{K_\bullet}$.
    Hence there exist an open subgroup $I_\circ$ (resp. $I_\bullet$) of $U_{K_\circ}$ (resp. $U_{K_\bullet}$) and a field homomorphism $\iota_\circ \colon K_\circ \to E$ (resp. $\iota_\bullet \colon K_\bullet \to E$) such that $\alpha_1 (I_\circ) = I_\bullet$ and the diagram
    \begin{equation*}
        \begin{tikzcd}
            & {K_\circ^\times} \\
            {I_\circ} & {U_{K_\circ}} & {G_{K_\circ}^\ab} & {E^\times} \\
            {I_\bullet} & {U_{K_\bullet}} & {G_{K_\bullet}^\ab} & {E^\times} \\
            & {K_\bullet^\times}
            \arrow["{{\cong,{{\alpha_1}}}}", from=2-3, to=3-3]
            \arrow[Rightarrow, no head, from=2-4, to=3-4]
            \arrow["{{\cong,{{\alpha_1\vert_{I_\circ}}}}}", from=2-1, to=3-1]
            \arrow["{\rho_\circ}", from=2-3, to=2-4]
            \arrow["{{{{\rho_\bullet}}}}", from=3-3, to=3-4]
            \arrow["{{{{\iota_\circ^\times}}}}"{description}, curve={height=-12pt}, from=1-2, to=2-4]
            \arrow["{{{{\iota_\bullet^\times}}}}"{description}, curve={height=12pt}, from=4-2, to=3-4]
            \arrow["\subseteq"{description}, from=2-1, to=1-2]
            \arrow["\subseteq"{description}, from=3-1, to=4-2]
            \arrow["{{\cong,{{\alpha_1\vert_{U_{K_\circ}}}}}}", from=2-2, to=3-2]
            \arrow["{{{{\Art_{K_\bullet}}}}}", from=3-2, to=3-3]
            \arrow["\subseteq", from=2-1, to=2-2]
            \arrow["{{{{\Art_{K_\circ}}}}}", from=2-2, to=2-3]
            \arrow["\subseteq", from=3-1, to=3-2]
        \end{tikzcd}
    \end{equation*}
    commutes.
    In particular, $\iota_\bullet \vert_{I_\bullet} \circ \alpha_1 \vert_{I_\circ} = \iota_\circ \vert_{I_\circ}$, and by \cref{lemma:2025.10.15.11.36.50}, $\iota_\circ (K_\circ) = \iota_\bullet (K_\bullet)$ in $E$.
    Therefore, we have the following field isomorphism:
    \begin{equation*}
        f
        \colon
            K_\circ
        \xrightarrow{\cong, \iota_\circ}
\iota_\circ (K_\circ)
            =
                \iota_\bullet (K_\bullet)
        \xrightarrow{\cong, \iota_\bullet^{-1}}
            K_\bullet.
    \end{equation*}
\end{proof}

\begin{theorem}
    \label{theorem:2025.10.22.02.13.10}
    Let $m$ be an integer $\geq 0$.
    For an isomorphism
    \begin{equation*}
        \alpha_{m + 3}
        \colon
            G_{K_\circ}^{m + 3}
        \xrightarrow{\cong}
            G_{K_\bullet}^{m + 3}
    \end{equation*}
    of \emph{filtered} profinite groups, there exists an isomorphism
    $\theta_{m + 1} \colon K_{\circ}^{m + 1} \to K_{\bullet}^{m + 1}$ such that
    \begin{equation*}
\alpha_{m + 1} (\sigma)
        =
\theta_{m + 1}
            \circ
                \sigma
            \circ
                \theta_{m + 1}^{-1}
    \end{equation*}
    for every $\sigma \in G_{K_\circ}^{m + 1}$, where $\alpha_{m + 1} \colon G_{K_\circ}^{m + 1} \to G_{K_\bullet}^{m + 1}$ is the isomorphism induced by $\alpha_{m + 3}$.
    Moreover,
    \begin{enumerate}[label=(\roman*)]
        \item if $m \geq 1$, the isomorphism $\theta_{m + 1}$ is uniquely determined by $\alpha_{m + 3}$;
        \item if $m = 0$, the isomorphism $\theta_{m + 1} \vert_{K_\circ} \colon K_\circ \to K_\bullet$ is uniquely determined by $\alpha_{m + 3}$.
    \end{enumerate}
\end{theorem}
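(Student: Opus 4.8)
\noindent The plan is to induct on $m$, the case $m=0$ being the heart of the matter and the inductive step a descent-and-glue argument resting on the level-shifting \cref{lemma:2025.10.15.11.33.30}.

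\smallskip
\noindent\textbf{Inductive step ($m\ge 1$).} Assume the statement for $m-1$. Given the filtered isomorphism $\alpha_{m+3}$, I would run over the finite abelian subextensions $L_\circ/K_\circ$ of $K_\circ^1/K_\circ$; each corresponds to an open subgroup $\Gal(K_\circ^{m+3}/L_\circ)$ of $G_{K_\circ}^{m+3}$ containing $(G_{K_\circ}^{m+3})^{[1]}$, carried by $\alpha_{m+3}$ to the open subgroup attached to a finite abelian $L_\bullet/K_\bullet$. By \cref{lemma:2025.10.15.11.33.30}(2) (base level $1$, relative level $m+2$) one has $\Gal(K_\circ^{m+3}/L_\circ)^{m+2}=G_{L_\circ}^{m+2}$, and by \cref{lemma:2025.10.15.11.33.03} the induced filtration on this open subgroup is the ramification one; hence $\alpha_{m+3}$ descends to a \emph{filtered} isomorphism $G_{L_\circ}^{m+2}\xrightarrow{\cong}G_{L_\bullet}^{m+2}$. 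The inductive hypothesis, applied to each pair $(L_\circ,L_\bullet)$, yields $\theta_m^{(L)}\colon L_\circ^m\to L_\bullet^m$ intertwining the induced map on $\Gal(L_\circ^m/L_\circ)$. Since $K_\circ^{m+1}=(K_\circ^1)^m=\bigcup_{L_\circ}L_\circ^m$ (using $(G_{K_\circ}^{[1]})^{[m]}=G_{K_\circ}^{[m+1]}$), I would assemble the $\theta_m^{(L)}$ into the desired $\theta_{m+1}$.

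\smallskip
\noindent\textbf{Base case ($m=0$).} Here I would sharpen the proof of \cref{theorem:2025.10.22.02.13.03}. Inducing $\alpha_2,\alpha_1$ from $\alpha_3$ and choosing $E/\QQ_p$ finite Galois containing $K_\circ$ and $K_\bullet$, the uniformizing representation $(\rho_\circ,E_+)$ of \cref{example:2025.10.15.11.36.19} transports across $\alpha_1$ (uniformizing by \cref{corollary:2025.10.15.11.36.37}) and produces embeddings $\iota_\circ,\iota_\bullet$ with common image, hence a field isomorphism $f\colon K_\circ\to K_\bullet$ via \cref{lemma:2025.10.15.11.36.50}. The new step is to promote $f$ to $\theta_1$: the commutative diagram in that proof already identifies $\alpha_1$ with the map induced by $f$ on the unit part through the reciprocity isomorphisms, and---using the group-theoretic inertia subgroup (\cref{lemma:2025.10.15.11.34.55}) and cyclotomic character (\cref{proposition:2025.10.15.11.34.35}) to control the unramified (Frobenius) quotient---I would verify that $\alpha_1$ coincides with the isomorphism $f_*\colon G_{K_\circ}^\ab\to G_{K_\bullet}^\ab$ functorially induced by $f$. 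Any extension of $f$ to $\theta_1\colon K_\circ^\ab\to K_\bullet^\ab$ realizing $f_*=\alpha_1$ then satisfies the conjugation formula.

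\smallskip
\noindent\textbf{Conjugation, gluing, and uniqueness.} The relation $\alpha_{m+1}(\sigma)=\theta_{m+1}\circ\sigma\circ\theta_{m+1}^{-1}$ propagates from the finite layers through the transfer maps, exactly as in the $\Ver$-compatible squares of \cref{section:2025.10.15.11.34.49}. For $m\ge 2$ the full uniqueness of the inductive hypothesis makes the family $\{\theta_m^{(L)}\}$ compatible under restriction, so the glued $\theta_{m+1}$ is well defined; at the transition $m=1$, where only $\theta_1^{(L)}\vert_{L_\circ}$ is pinned down, I would choose the lifts coherently along the directed system (at each stage the ambiguity is a $\Gal(L_\bullet^1/L_\bullet)$-torsor compatible with the $G$-action). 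For uniqueness, if $\theta,\theta'$ both realize $\alpha_{m+1}$ then $\gamma\coloneqq\theta'^{-1}\circ\theta\in\Aut(K_\circ^{m+1})$ centralizes $\Gal(K_\circ^{m+1}/K_\circ)=G_{K_\circ}^{m+1}$ and so preserves $K_\circ$; the base-case rigidity gives $\gamma\vert_{K_\circ}=\id$, whence $\gamma\in Z(G_{K_\circ}^{m+1})$, and for $m\ge 1$ a slimness (center-freeness) property of $G_{K_\circ}^{m+1}$ forces $\gamma=\id$, while for $m=0$ one keeps only $\gamma\vert_{K_\circ}=\id$.

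\smallskip
\noindent\textbf{Main obstacle.} The crux is the base case: upgrading the purely existential \cref{theorem:2025.10.22.02.13.03} to the functorial identity $\alpha_1=f_*$ together with the rigidity $\gamma\vert_{K_\circ}=\id$. This is precisely where the passage from $G_K^2$ to $G_K^3$ is spent, and where the reconstructions of \cref{section:2025.10.15.11.33.53,section:2025.10.15.11.34.49,section:2025.10.15.11.35.27} must be combined most delicately; the secondary difficulty is the coherent gluing at $m=1$.
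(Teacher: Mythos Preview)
Your uniqueness argument via center-freeness is essentially the paper's (it invokes \cref{proposition:2025.10.15.11.37.22} in the same way), but your existence argument takes a different route and carries a genuine gap in the base case.

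The paper does \emph{not} induct on $m$. It argues directly for every $m$ at once: for each finite Galois subextension $L_\square/K_\square$ of $K_\square^{m+1}/K_\square$, the open subgroup $H_\square=\Gal(K_\square^{m+3}/L_\square)$ contains $(G_{K_\square}^{m+3})^{[m+1]}$, so \cref{lemma:2025.10.15.11.33.30} with levels $(m+1,2)$ gives $H_\square^2=G_{L_\square}^2$, and \cref{lemma:2025.10.15.11.33.03} makes $\alpha_{m+3}\vert_{H_\circ}$ filtered. Running the \emph{proof} of \cref{theorem:2025.10.22.02.13.03} at each such $L$ produces a field isomorphism $\theta_L\colon L_\circ\to L_\bullet$ that set-theoretically extends $\alpha_{1,L}\vert_{I_{L,\circ}}$ for some open $I_{L,\circ}\subseteq U_{L_\circ}$. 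For $L_1\subseteq L_2$ one shrinks so that $I_{L_1,\circ}\subseteq I_{L_2,\circ}$; then $\theta_{L_2}\vert_{L_{1,\circ}}=\theta_{L_1}$ by \cref{lemma:2025.10.15.11.36.50}, and the limit is $\theta_{m+1}$. The conjugation identity is checked on each $I_{L,\circ}$---where it is precisely the $G_{K}^{m+3}$-equivariance of the induced map $\alpha_{1,L}\colon G_{L_\circ}^\ab\to G_{L_\bullet}^\ab$---and then extended to all of $L_\circ$ by \cref{lemma:2025.10.15.11.36.50} again.

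Your gap is the step ``verify that $\alpha_1$ coincides with $f_*$'' at level $K$. The uniformizing-representation diagram in \cref{theorem:2025.10.22.02.13.03} gives agreement of $\alpha_1$ and $f_*$ only on an \emph{open} subgroup $I_\circ\subseteq U_{K_\circ}$; this says nothing about the torsion in $U_{K_\circ}$, and on the unramified quotient $\Gal(K_\circ^\nr/K_\circ)\cong\widehat{\ZZ}$ the cyclotomic character cannot pin down the $\ZZ_p$-component of Frobenius (the $p$-part of $\chi_{\cycl,K}$ has finite image there). So ``inertia plus cyclotomic character'' does not force $\alpha_1=f_*$. The paper never attempts this; it spends the extra solvable level not on a finer analysis of $G_K^\ab$ but on having $G_L^2$ in hand for every finite $L/K$ inside $K^{m+1}$, so that the conjugation relation can be verified finite-level by finite-level via \cref{lemma:2025.10.15.11.36.50}. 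This also dissolves your ``secondary difficulty'': one glues isomorphisms $L_\circ\to L_\bullet$ of \emph{finite} extensions, each already rigidified by its values on $I_{L,\circ}$, so no torsor of lifts needs to be threaded through the directed system.
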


\begin{proof}
    We keep the notation and hypotheses of the proof of \cref{theorem:2025.10.22.02.13.03}.

    \vspace{1em}
    \noindent \emph{Uniqueness}.
    Suppose that both isomorphisms $\theta_{m + 1, 1}, \theta_{m + 1, 2} \colon K_\circ^{m + 1} \to K_\bullet^{m + 1}$ satisfy the condition.
    Then there exist isomorphisms $\theta_1, \theta_2 \colon K_\circ^\alg \to K_\bullet^\alg$ that respectively extend $\theta_{m + 1, 1}, \theta_{m + 1, 2}$; we have
    \begin{equation*}
\gamma
            \coloneqq
                (\theta_2)^{-1} \circ \theta_1
        \in
            \Gal (K_\circ^\alg / \QQ_p)
    \end{equation*}
    and
    \begin{equation}
        \label{equation:2025.10.15.11.37.02}
\gamma \vert_{K_\circ^{m + 1}}
            \circ
                \sigma
            \circ
                \gamma^{-1} \vert_{K_\circ^{m + 1}}
        =
(\theta_{m + 1, 2})^{-1}
            \circ
                \theta_{m + 1, 1}
            \circ
                \sigma
            \circ
                (\theta_{m + 1, 1})^{-1}
            \circ
                \theta_{m + 1, 2}
        =
            \sigma, \quad
    \end{equation}
    for all $\sigma \in G_{K_\circ}^{m + 1}$.
    Hence we see that, for all $x \in K_\circ^\times$,
    \begin{equation*}
\gamma \vert_{K_\circ^\ab}
            \circ
                \Art_{K_\circ} (x)
            \circ
                \gamma^{-1} \vert_{K_\circ^\ab}
        =
            \Art_{K_\circ} (x),
    \end{equation*}
    and $\gamma (x) = x$ by local class field theory; it follows that $\gamma \in \Gal (K_\circ^\alg / K_\circ)$ (i.e., $\theta_{m + 1, 1} \vert_{K_\circ} = \theta_{m + 1, 2} \vert_{K_\circ}$).
    Furthermore, $\gamma \vert_{K_\circ^{m + 1}} \in Z (G_{K_\circ}^{m + 1})$ by \eqref{equation:2025.10.15.11.37.02}; together with the fact that $Z (G_{K_\circ}^{m + 1})$ is trivial for $m \geq 1$ (cf. \cref{proposition:2025.10.15.11.37.22}), we conclude that $\gamma \vert_{K_\circ^{m + 1}} = 1$ (i.e., $\theta_{m + 1, 1} = \theta_{m + 1, 2}$) if $m \geq 1$.

    \vspace{1em}
    \noindent \emph{Existence}.
    We suppose that, for each $i \in \left\{ 1, 2 \right\}$,
    \begin{itemize}
        \item $L_{i, \square}$ is a finite Galois extension of $K_\square$ contained in $K_\square^{m + 1}$ for each $\square \in \left\{ \circ, \bullet \right\}$;
        \item $H_{i, \square} = \Gal (K_\square^{m + 3} / L_{i, \square}) \,(\supseteq (G_{K_\square}^{m + 3})^{[m + 1]})$ for each $\square \in \left\{ \circ, \bullet \right\}$;
        \item $H_{i, \bullet} = \alpha_{m + 3} (H_{i, \circ})$,
    \end{itemize}
    and that $L_{1, \circ} \subseteq L_{2, \circ}$.
    Then $\alpha_{m + 3} \vert_{H_{i, \circ}} \colon H_{i, \circ} \to H_{i, \bullet}$ is an isomorphism of \emph{filtered} profinite groups by \cref{lemma:2025.10.15.11.33.03}.
    Hence $\alpha_{m + 3} \vert_{H_{i, \circ}}$ induces an isomorphism $\alpha_{2, i} \colon G_{L_{i, \circ}}^2 = H_{i, \circ}^2 \to G_{L_{i, \bullet}}^2 = H_{i, \bullet}^2$ of filtered profinite groups.
    As we have seen in the proof of \cref{theorem:2025.10.15.11.32.12}, there exist an open subgroup $I_{i, \circ}$ (resp. $I_{i, \bullet}$) of $U_{L_{i, \circ}}$ (resp. $U_{L_{i, \bullet}}$) and a field isomorphism $\theta_{L_{i, \circ}} \colon L_{i, \circ} \to L_{i, \bullet}$ such that $\alpha_{1,i} (I_{i, \circ}) = I_{i, \bullet}$ and $\theta_{L_{i, \circ}}$ (set-theoretically) extends the group isomorphism $\alpha_{1, i} \vert_{I_{i,\circ}} \colon I_{i, \circ} \to I_{i, \bullet}$, where $\alpha_{1, i} \colon G_{L_{i, \circ}}^\ab \to G_{L_{i, \bullet}}^\ab$ is the isomorphism induced by $\alpha_{2, i}$.
    We can assume without loss of generality that $I_{1, \circ} \subseteq I_{2, \circ}$, replacing $I_{1, \circ}$ with $I_{1, \circ} \cap I_{2, \circ}$ if necessary; the diagram
    \begin{equation*}
        \begin{tikzcd}
            {I_{1,\circ}} & {I_{2,\circ}} \\
            {I_{1,\bullet}} & {I_{2,\bullet}}
            \arrow["\subseteq", from=1-1, to=1-2]
            \arrow["{\alpha_{1,1}\vert_{I_{1,\circ}}}", from=1-1, to=2-1]
            \arrow["{\alpha_{1,2}\vert_{I_{2,\circ}}}", from=1-2, to=2-2]
            \arrow["\subseteq", from=2-1, to=2-2]
        \end{tikzcd}
    \end{equation*}
    commutes by definition.
    It follows immediately from \cref{lemma:2025.10.15.11.36.50} that $\theta_{L_{2, \circ}}$ restricts to $\theta_{L_{1, \circ}}$;
    by passage to the limit, we obtain
    \begin{equation*}
        \theta_{m + 1}
        \colon
            K_\circ^{m + 1}
        \to
            K_\bullet^{m + 1}.
    \end{equation*}

    It remains to check that $\theta_{m + 1}$ satisfies the stated condition: since $L_{i, \circ}$ is an arbitrary finite Galois extension of $K_\circ$ contained in $K_\circ^{m + 1}$, it suffices to show that, for all $\gamma_\circ \in G_{K_\circ}^{m + 3}$ and $x \in L_{i, \circ}$,
    \begin{equation}
        \label{equation:2025.10.22.02.23.46}
\theta_{L_{i, \circ}} (\gamma_\circ (x))
        =
            \gamma_\bullet (\theta_{L_{i, \circ}} (x)),
    \end{equation}
    where $\gamma_\bullet = \alpha_{m + 3} (\gamma_\circ)$.
    By \cref{lemma:2025.10.15.11.36.50}, it is reduced to showing that \eqref{equation:2025.10.22.02.23.46} holds for all $x \in I_{i, \circ}$, that is,
    \begin{equation*}
\alpha_{1, i} (\gamma_\circ (x))
        =
            \gamma_\bullet (\alpha_{1, i} (x))
    \end{equation*}
    for all $x \in I_{i, \circ}$.
    This holds since we are regarding $I_{i, \square}$ as a subgroup of $G_{L_{i, \square}}^\ab$ via $\Art_{L_{i, \square}}$, and
    \begin{equation*}
\alpha_{1, i} (
\gamma_\circ \vert_{L_{i, \circ}^\ab}
                \circ
                    \sigma
                \circ
                    \gamma_\circ^{-1} \vert_{L_{i, \circ}^\ab}
            )
        =
\gamma_\bullet \vert_{L_{i, \bullet}^\ab}
            \circ
                \alpha_{1, i}(\sigma)
            \circ
                \gamma_\bullet^{-1} \vert_{L_{i, \bullet}^\ab}
    \end{equation*}
    for all $\sigma \in G_{L_{i, \circ}}^\ab$.
\end{proof}

\appendix

\section{Center-freeness of $G_K^{m}$, $m \geq 2$}
\label{section:2025.10.15.11.37.10}

This appendix is devoted to the proof of the following proposition.

\begin{proposition}
    \label{proposition:2025.10.15.11.37.22}
    Let $K$ be a mixed-characteristic local field. Then
    \begin{equation*}
Z (G_K^{m + 1})
        =
            \left\{ 1 \right\}
    \end{equation*}
    for all integer $m \geq 1$.
\end{proposition}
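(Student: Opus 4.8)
The plan is to induct on $m \ge 1$, the assertion for $m$ being $Z(G_K^{m+1}) = \{1\}$. Throughout, write $A_m \coloneqq (G_K^{m+1})^{[m]} = G_K^{[m]}/G_K^{[m+1]}$ for the last nontrivial term of the derived series of $G_K^{m+1}$; this is an abelian normal subgroup with quotient $G_K^{m+1}/A_m = G_K^m$, so that the conjugation action of $G_K^{m+1}$ on $A_m$ factors through $\Gal(K^m/K) = G_K^m$. Since $K^{m+1} = (K^m)^{\ab}$, local reciprocity at each finite level (exactly as in \cref{section:2025.10.15.11.33.53}) identifies $A_m = \Gal(K^{m+1}/K^m) = G_{K^m}^{\ab}$ with the inverse limit $\varprojlim_L \widehat{L^\times}$, where $L$ runs over the finite Galois subextensions of $K^m/K$ and the transition maps are the norm maps; the $\Gal(K^m/K)$-action is the natural one.

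The arithmetic core is the vanishing of the invariants $A_m^{G_K^m} = \{1\}$. First I would note that invariants commute with the inverse limit, so $A_m^{G_K^m} = \varprojlim_L (\widehat{L^\times})^{\Gal(L/K)}$; and on an element $y$ of this invariant subsystem the norm map acts by $N_{L'/L}(y) = y^{[L':L]}$, i.e.\ as multiplication by the degree. Because $K^m \supseteq K^{\un}$ and $K^m$ contains $\ZZ_p$-extensions, the degrees $[L:K]$ are cofinally divisible by arbitrarily high powers of every prime; hence the inverse limit of these profinite (in fact finitely generated $\ZZ_p$-plus-torsion) modules under multiplication-by-$[L':L]$ vanishes. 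This single computation drives the induction: for $m \ge 2$ the image of any $z \in Z(G_K^{m+1})$ in $G_K^m$ is central, hence trivial by the inductive hypothesis $Z(G_K^m) = \{1\}$, so $z \in A_m$; and then $z \in A_m^{G_K^m} = \{1\}$, giving $z = 1$.

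It remains to settle the base case $m = 1$, where $G_K^{1} = G_K^{\ab}$ carries no center constraint and the reduction ``$z \in A_1$'' is not automatic. Here I would use the commutator pairing $c \colon G_K^{\ab} \times G_K^{\ab} \to G_K^{[1]}/G_K^{[2]} = A_1$, $(\bar x, \bar y) \mapsto [x, y] \bmod G_K^{[2]}$: if $z \in Z(G_K^2)$ then $[z, g] = 1$ for all $g$, so $\bar z \in G_K^{\ab}$ lies in the radical of $c$. Granting that this radical is trivial, we get $\bar z = 1$, i.e.\ $z \in A_1$, and then $z \in A_1^{G_K^{\ab}} = \{1\}$ by the computation above, completing the proof.

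The hard part will therefore be the non-degeneracy of the commutator pairing $c$ on $G_K^{\ab}$. The strategy is to compute its pro-$\ell$ part through Kummer theory and reciprocity: for primes $\ell$ with $\mu_\ell \subseteq K$ the relevant reduction of $c$ recovers the Hilbert symbol $K^\times/\ell \times K^\times/\ell \to \mu_\ell$, whose non-degeneracy annihilates the $\ell$-part of $\bar z$; for the remaining primes one supplements this with the $\ZZ_\ell(1)$-valued Kummer towers of elements of $K^\times$, on which $G_K^{\ab}$ acts through $\chi_{\cyc}$ (cf.\ \cref{proposition:2025.10.15.11.34.35}), forcing $\bar z \in \Ker \chi_{\cyc}$ and, in combination, $\bar z = 1$. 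Making this uniform over all $\ell$---in particular reconciling the symbol computation with the cyclotomic twist when $\mu_\ell \not\subseteq K$---is the delicate step, and is where the explicit structure $G_K^{\ab} \cong \widehat{K^\times}$ and the non-degeneracy of the local symbol are indispensable.
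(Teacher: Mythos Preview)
Your inductive scaffold and the computation of $A_m^{G_K^m}$ are in the same spirit as the paper, but two points deserve comment.

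First, the claim that on the invariant subsystem ``the norm map acts by $N_{L'/L}(y) = y^{[L':L]}$'' tacitly uses $(\widehat{L^\times})^{\Gal(L/K)} = \widehat{K^\times}$. Without this, $y_{L'}^{[L':L]}$ lives in $\widehat{L'^\times}$, not in the fixed module $\widehat{K^\times}$, and knowing that $y_L$ becomes infinitely divisible in the \emph{varying} groups $\widehat{L'^\times}$ does not force $y_L = 1$. The paper isolates this as a separate lemma (its \cref{lemma:2025.10.15.11.37.39}(3)), proved via Hilbert~90 and a short exact-sequence/cokernel argument; you should either cite or prove it.

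Second---and this is the real divergence---the paper does \emph{not} treat $m = 1$ as a special base case. It proves $Z(G_K^{m+1}) \subseteq \Gal(K^{m+1}/K^m)$ uniformly for all $m \geq 0$ in two lines (its \cref{lemma:2025.10.15.11.37.28}): for $\gamma$ central and any finite Galois $L/K$ inside $K^m$, the Galois-equivariance of the Artin map gives $\Art_L(\gamma(x)) = \gamma\,\Art_L(x)\,\gamma^{-1} = \Art_L(x)$ for every $x \in L^\times$, whence $\gamma$ fixes $L$; letting $L$ exhaust $K^m$ finishes. This one observation replaces your entire commutator-pairing/Hilbert-symbol program, and also obviates the induction. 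Your sketch for $m = 1$ is plausible but genuinely incomplete: the pairing you write down is only well-defined with target $(G_K^2)^{[1]}/[(G_K^2)^{[1]}, G_K^2]$ rather than $A_1$, and the non-degeneracy you need is a statement about the full profinite $G_K^{\ab}$, so assembling the $\ell$-by-$\ell$ Hilbert-symbol computations (including the primes with $\mu_\ell \not\subseteq K$, which you flag as ``delicate'') into a single radical-triviality statement is real work you have not supplied. The Artin-equivariance argument sidesteps all of this.
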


\begin{remark}
    \Cref{proposition:2025.10.15.11.37.22} has been originally proved by S. Ladkani \cite{Ladkani1996} for the case $m = 1$.
    (It is known that the assertion for general $m \geq 1$ follows from the case $m = 1$ by induction, cf. \cite[Proof of Proposition 1.1 (ix)]{SaidiTamagawa2022}.)
    In this appendix, we provide an alternative proof of the proposition.
\end{remark}
To prove \cref{proposition:2025.10.15.11.37.22}, we first give a proof of a weaker statement.

\begin{lemma}
    \label{lemma:2025.10.15.11.37.28}
    Let $K$ be a mixed-characteristic local field.
    Then
    \begin{equation*}
Z (G_K^{m + 1})
        \subseteq
            \Gal (K^{m + 1} / K^m)
    \end{equation*}
    for all integer $m \geq 0$.
\end{lemma}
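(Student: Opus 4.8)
The plan is to reduce the statement to showing that the image $\tau$ of a central element $z \in Z(G_K^{m+1})$ under the canonical surjection $\pi \colon G_K^{m+1} = \Gal(K^{m+1}/K) \twoheadrightarrow G_K^m = \Gal(K^m/K)$ is trivial, which is exactly the assertion $z \in \Gal(K^{m+1}/K^m) = \Ker \pi$. The case $m = 0$ is immediate, since $K^0 = K$ and $\Gal(K^1/K^0) = G_K^\ab$ is the whole (abelian) group $G_K^1$; so I assume $m \geq 1$ and argue by contradiction, supposing $\tau \neq 1$. Conceptually, centrality of $z$ forces $\tau$ to act trivially (by conjugation) on the abelian group $N \coloneqq \Gal(K^{m+1}/K^m) = G_{K^m}^\ab$, so the content of the lemma is the faithfulness of the conjugation action of $G_K^m$ on $G_{K^m}^\ab$.

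To produce a contradiction I would exhibit an element of $G_K^{m+1}$ that fails to commute with $z$, built from local class field theory. Since $K^m/K$ is Galois and its finite Galois subextensions are cofinal, I first choose a finite Galois extension $L/K$ contained in $K^m$ with $\tau\vert_L \neq 1$, and then an element $a \in L^\times$ with $\tau(a) \neq a$ (such $a$ exists, as a nontrivial automorphism of $L$ fixing $K$ moves some element of $L^\times$). Observing that $K^{m+1} = (K^m)^\ab$ is the maximal abelian extension of $K^m$, and that $L^\ab K^m / K^m$ is abelian, I get $L^\ab \subseteq K^{m+1}$; hence I may choose $\sigma \in \Gal(K^{m+1}/L) \subseteq G_K^{m+1}$ whose restriction to $L^\ab$ equals $\Art_L(a) \in \Gal(L^\ab/L) = G_L^\ab$.

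The key step is the conjugation computation: since $z\vert_L = \tau\vert_L$ and $L^\ab/K$ is Galois, the $\Gal(L^\ab/K)$-equivariance (naturality) of the reciprocity map yields $(z \sigma z^{-1})\vert_{L^\ab} = \Art_L(\tau a)$, whereas $\sigma\vert_{L^\ab} = \Art_L(a)$. As $\Art_L$ is injective for the local field $L$ and $\tau a \neq a$, these restrictions differ, so $z \sigma z^{-1} \neq \sigma$, contradicting $z \in Z(G_K^{m+1})$; therefore $\tau = 1$. The main obstacle is precisely this equivariance step---translating the field-theoretic fact that $\tau$ moves $a$ into the group-theoretic fact that $z$ fails to commute with $\sigma$---together with the verification that $L^\ab \subseteq K^{m+1}$ so that $\sigma$ is available. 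Both rest on standard functoriality of the local reciprocity map (already used in, e.g., the proof of \cref{proposition:2025.10.15.11.34.35}) and require no information about the ramification filtration.
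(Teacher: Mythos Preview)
Your argument is correct and uses the same key input as the paper's proof---the Galois-equivariance of the local reciprocity map, which translates ``$z$ commutes with a lift of $\Art_L(a)$'' into ``$z\vert_L$ fixes $a$''. The only difference is cosmetic: the paper argues directly (for every finite Galois $L\subseteq K^m$ and every $x\in L^\times$, centrality of $\gamma$ gives $\gamma\vert_{L^\ab}\circ\Art_L(x)\circ\gamma^{-1}\vert_{L^\ab}=\Art_L(x)$, hence $\gamma(x)=x$), whereas you run the contrapositive by contradiction.
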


\begin{proof}
    Suppose that $\gamma \in Z (G_K^{m + 1})$.
    Then $\gamma \circ \sigma \circ \gamma^{-1} = \sigma$ for all $\sigma \in G_{K}^{m + 1}$.
    Let $L / K$ be a finite Galois subextension of $K^m / K$, so that $L^\ab \subseteq K^{m + 1}$.
    We see that, for all $x \in L^\times$,
    \begin{equation*}
\gamma \vert_{L^\ab}
            \circ
                \Art_{L}(x)
            \circ
                \gamma^{-1} \vert_{L^\ab}
        =
            \Art_{L} (x),
    \end{equation*}
    and $\gamma (x) = x$ by local class field theory.
    Therefore, $\gamma \in \Gal (K^{m + 1} / L)$, and the assertion follows as the subextension $L / K$ is arbitrary.
\end{proof}

\begin{remark}
    For the case in which the base field is \emph{torally Kummer-faithful} (see, e.g., \cite[Definition 1.5]{Mochizuki2015}, for the definition of (torally) Kummer-faithful fields), a claim similar to that of \cref{lemma:2025.10.15.11.37.28} holds: let $k$ be a torally Kummer-faithful field, and $m$ an integer $\geq 0$.
    We shall write $\Primes_{\times / k}$ for the \emph{set of prime numbers invertible in $k$}, $\widehat{\ZZ}_{\times / k}$ for the \emph{maximal pro-$\Primes_{\times / k}$ quotient of $\widehat{\ZZ}$} and
    \begin{equation*}
        \chi_{\cycl, k}
        \colon
            G_k
        \to
            (
                \widehat{\ZZ}_{\times / k}
            )^\times
    \end{equation*}
    for the ($\Primes_{\times / k}$-adic) \emph{cyclotomic character} of $k$.
    Then
    \begin{equation*}
Z (G_k^{m + 1}) \cap \Ker (
                \chi_{\cycl, k}
                \colon
                    G_k^{m + 1}
                \to
                    (\widehat{\ZZ}_{\times / k})^\times
            )
        \subseteq
            \Gal(k^{m + 1} / k^m)
    \end{equation*}
    holds.
    (Compare \cite[Proposition 1.5]{Hoshi2017}.)

    There is nothing to show if $m = 0$. For the case $m \geq 1$, we give a proof by contradiction.
    Assume that there exists an element $\gamma \in Z (G_k^{m + 1}) \cap \Ker (\chi_{\cycl, k})$ which does not belong to $\Gal (k^{m + 1} / k^m)$;
    let $\tilde \gamma \in G_k$ be a lifting of $\gamma$.
    Then we can choose a finite Galois subextension $l / k$ of $k^m / k$, such that the corresponding open normal subgroup $\Gal (k^{m + 1} / l)$ does not contain $\gamma$.

    Since $k$ is torally Kummer-faithful, we have the \emph{injective} homomorphism
    \begin{equation*}
l^\times
        \to
            \varprojlim_n l^\times / (l^\times)^n
        \cong
            H^1 (G_l, \varprojlim_n \mu_n (k^\sep))
    \end{equation*}
    of $G_k$-modules.
    ($n$ runs through the integers $\geq 1$ whose prime factors belong to $\Primes_{\times / k}$.)
    We deduce a contradiction by claiming that $\tilde{\gamma}$ acts trivially on $H^1 (G_l, \varprojlim \mu_n (k^\sep))$, and hence on $l^\times$.

    For all $\sigma \in G_l$, we have
    \begin{equation*}
\xi_\sigma
            \coloneqq
                \sigma^{-1} \tilde{\gamma}^{-1} \sigma \tilde{\gamma}
        \in
            G_k^{[m + 1]} \,
            (
                \subseteq
                    G_k^{[m]}
                \subseteq
                    G_l
            ),
    \end{equation*}
    since $\gamma \in Z (G_k^{m + 1})$.
    On the other hand, the action of $\tilde{\gamma}$ on $H^1 (G_l, \mu_n (k^\sep))$ for each $n$ is determined as follows: for each $1$-cocycle (i.e., crossed homomorphism) $\omega \colon G_l \to \mu_n (k^\sep)$,
    \begin{equation*}
\tilde{\gamma} \omega (-)
        =
            \tilde{\gamma} \cdot \omega
            (\tilde{\gamma}^{-1} \cdot - \cdot \tilde{\gamma})
        =
            \omega (\tilde{\gamma}^{-1} \cdot - \cdot \tilde{\gamma}).
    \end{equation*}
    Therefore, it suffices to show that
    \begin{equation*}
\tilde{\gamma} \omega (-) / \omega (-)
        =
            \omega (\tilde{\gamma}^{-1} \cdot - \cdot \tilde{\gamma})
            /
            \omega (-)
        =
            \omega(- \cdot \xi_{(-)})
            /
            \omega (-)
        =
            (-) \cdot \omega (\xi_{(-)})
    \end{equation*}
    is a $1$-coboundary.
    This is straightforward, since it holds that $\omega (\xi) = 1$ for all $1$-cocycle $\omega$ and $\xi \in G_k^{[m + 1]}$, which follows from the fact that $\omega \vert_{G_k^{[m]}}$ is a group homomorphism (as $G_k^{[m]}$ acts trivially on $\mu_n(k^\sep)$).
\end{remark}

\begin{lemma}
    \label{lemma:2025.10.15.11.37.39}
    Let $K$ be a mixed-characteristic local field, and let $L / K$ be a finite extension with inclusion map $\iota \colon K \to L$.
    \begin{enumerate}
        \item The group homomorphism $\widehat{\iota^\times} \colon \widehat{K^\times} \to \widehat{L^\times}$ is injective.
        \item The transfer map $\Ver \colon G_K^\ab \to G_L^\ab$ is injective.
        \item It holds that $(\widehat{L^\times})^{G_K} = \widehat{K^\times}$.
    \end{enumerate}
\end{lemma}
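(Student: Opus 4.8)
The three parts are closely linked, and the plan is to establish (1) by hand from the topological structure of the unit and value groups, deduce (2) from (1) via the compatibility of the reciprocity map with the transfer, and settle (3) by a short Hilbert-90 computation. For (1) I would fix uniformizers $\pi_K$ and $\pi_L$, giving topological splittings $\widehat{K^\times} \cong U_K \oplus \widehat{\ZZ}$ and $\widehat{L^\times} \cong U_L \oplus \widehat{\ZZ}$, where the $\widehat{\ZZ}$-factors are detected by $\ord_K$ and $\ord_L$ (profinite completion alters only the value-group factor, since $U_K$ and $U_L$ are already compact). Under $\iota^\times$ the unit part $U_K \hookrightarrow U_L$ is the inclusion of a closed subgroup, while $\pi_K \mapsto u \pi_L^{e}$ with $e = e_{L/K} \geq 1$ and $u \in U_L$. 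If an element $x\pi_K^{n}$ with $x \in U_K$ and $n \in \widehat{\ZZ}$ maps to $1$, then comparing $\ord_L$ forces $en = 0$, hence $n = 0$ as $\widehat{\ZZ}$ is torsion-free, and then $x = 1$ by injectivity on units; this proves that $\widehat{\iota^\times}$ is injective.

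For (2) I would invoke the functoriality of local class field theory with respect to the transfer: after profinite completion one has $\Art_L \circ \widehat{\iota^\times} = \Ver \circ \Art_K$, where $\Art_K \colon \widehat{K^\times} \xrightarrow{\cong} G_K^\ab$ and $\Art_L \colon \widehat{L^\times} \xrightarrow{\cong} G_L^\ab$ are isomorphisms. Consequently $\Ver$ is injective if and only if $\widehat{\iota^\times}$ is, and (2) follows at once from (1).

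For (3) I read the $G_K$-action as the $\Gal(L/K)$-action (which is exactly what occurs in every application, where the relevant $L/K$ are Galois) and set $G \coloneqq \Gal(L/K)$. The inclusion $\widehat{K^\times} \subseteq (\widehat{L^\times})^{G}$ is immediate, so the content is the reverse inclusion. I would take $G$-invariants in the $G$-equivariant exact sequence $1 \to U_L \to \widehat{L^\times} \xrightarrow{\ord_L} \widehat{\ZZ} \to 0$ (with $G$ acting trivially on $\widehat{\ZZ}$): this gives $U_L^{G} = U_K$ and identifies $\ord_L\!\left((\widehat{L^\times})^{G}\right)$ with $\ker\!\left(\delta \colon \widehat{\ZZ} \to H^1(G, U_L)\right)$. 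The discrete sequence $0 \to U_L \to L^\times \xrightarrow{\ord_L} \ZZ \to 0$ together with $H^1(G, L^\times) = 0$ (Hilbert 90) yields $H^1(G, U_L) \cong \ZZ/e\ZZ$, namely the cokernel of $\ord_L \colon K^\times \to \ZZ$; tracing the class of $\pi_L$ shows that $\delta$ is reduction modulo $e$, so $\ker \delta = e\widehat{\ZZ} = \ord_L(\widehat{K^\times})$. Since the unit parts and the value-group images then coincide and $\widehat{K^\times} \subseteq (\widehat{L^\times})^{G}$, I conclude equality.

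The hard part is (3). The delicate points are to confirm that the finite-group cohomology $H^1(G, U_L)$ computed from the discrete sequence really governs the connecting map of the completed sequence—this is safe because $G$ is finite, so continuous and ordinary cochains agree—and to pin down $\delta$ as reduction modulo $e$ through the explicit uniformizer computation. I would also flag the interpretive caveat that $(\widehat{L^\times})^{G_K}$ presupposes $L/K$ Galois for the action to be defined, which holds throughout the intended applications.
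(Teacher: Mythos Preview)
Your proof is correct and follows essentially the same route as the paper: part (1) via the split exact sequence $1 \to U_K \to \widehat{K^\times} \to \widehat{\ZZ} \to 0$ and torsion-freeness of $\widehat{\ZZ}$, part (2) via the reciprocity--transfer compatibility, and part (3) via the long exact sequence in $G$-cohomology together with $H^1(G,U_L)\cong \ZZ/e\ZZ$ from Hilbert~90. The only substantive difference is that the paper, rather than simply flagging the Galois hypothesis in (3), uses (1) to reduce the general case to it: embedding $\widehat{L^\times}\hookrightarrow\widehat{(L')^\times}$ with $L'$ the Galois closure, any $G_K$-fixed element of $\widehat{L^\times}$ already lies in $(\widehat{(L')^\times})^{\Gal(L'/K)}=\widehat{K^\times}$. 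You are right that only the Galois case is used downstream, so your caveat is harmless in context.
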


\begin{proof}
    (1) We set $e \coloneqq e_L / e_K$.
    We consider the following commutative diagram (of abelian groups) with exact rows:
    \begin{equation}
        \label{equation:2025.10.15.11.37.46}
        \begin{tikzcd}
            1 & {U_K} & {K^\times} & {\ZZ_+} & 1 \\
            1 & {U_L} & {L^\times} & {\ZZ_+} & 1
            \arrow[from=1-1, to=1-2]
            \arrow[from=1-2, to=1-3]
            \arrow[from=1-2, to=2-2]
            \arrow["{{\ord_K}}", from=1-3, to=1-4]
            \arrow["{\iota^\times}", from=1-3, to=2-3]
            \arrow[from=1-4, to=1-5]
            \arrow["{{e\cdot (-)}}", from=1-4, to=2-4]
            \arrow[from=2-1, to=2-2]
            \arrow[from=2-2, to=2-3]
            \arrow["{{\ord_L}}", from=2-3, to=2-4]
            \arrow[from=2-4, to=2-5]
        \end{tikzcd}.
    \end{equation}
    By profinite completion, we obtain the following commutative diagram, in which all rows are exact:
    \begin{equation}
        \label{equation:2025.10.15.11.37.54}
        \begin{tikzcd}
            & 1 & {U_K} & {K^\times} & {\ZZ_+} & 1 \\
            & 1 & {U_L} & {L^\times} & {\ZZ_+} & 1 \\
            1 & {U_K} & {\widehat{K^\times}} & {\widehat{\ZZ}_+} & 1 \\
            1 & {U_L} & {\widehat{L^\times}} & {\widehat{\ZZ}_+} & 1
            \arrow[from=1-2, to=1-3]
            \arrow[from=1-3, to=1-4]
            \arrow[from=1-3, to=2-3]
            \arrow[dashed, from=1-3, to=3-2]
            \arrow[from=1-4, to=1-5]
            \arrow["{\iota^\times}", from=1-4, to=2-4]
            \arrow[dashed, from=1-4, to=3-3]
            \arrow[from=1-5, to=1-6]
            \arrow["{{e\cdot(-)}}", from=1-5, to=2-5]
            \arrow[dashed, from=1-5, to=3-4]
            \arrow[from=2-2, to=2-3]
            \arrow[from=2-3, to=2-4]
            \arrow[dashed, from=2-3, to=4-2]
            \arrow[from=2-4, to=2-5]
            \arrow[dashed, from=2-4, to=4-3]
            \arrow[from=2-5, to=2-6]
            \arrow[dashed, from=2-5, to=4-4]
            \arrow[from=3-1, to=3-2]
            \arrow[from=3-2, to=3-3]
            \arrow[from=3-2, to=4-2]
            \arrow[from=3-3, to=3-4]
            \arrow["{\widehat{\iota^\times}}", from=3-3, to=4-3]
            \arrow[from=3-4, to=3-5]
            \arrow["{{{e\cdot (-)}}}", from=3-4, to=4-4]
            \arrow[from=4-1, to=4-2]
            \arrow[from=4-2, to=4-3]
            \arrow[from=4-3, to=4-4]
            \arrow[from=4-4, to=4-5]
        \end{tikzcd}.
    \end{equation}
    Since $e$ is not a zero-divisor in $\widehat{\ZZ}$, we can conclude that the map $\widehat{\iota^\times} \colon \widehat{K^\times} \to \widehat{L^\times}$ is injective.

    (2) $\Art_K$ and $\Art_L$ respectively induce the isomorphisms $(\Art_K)^\wedge \colon \widehat{K^\times} \to G_K^\ab$ and $(\Art_L)^\wedge \colon \widehat{L^\times} \to G_L^\ab$ (cf. p.\pageref{2025.10.15.11.34.04});
    these fit into the following commutative diagram:
    \begin{equation*}
        \begin{tikzcd}
            {\widehat{K^\times}} & {G_K^\ab} \\
            {\widehat{L^\times}} & {G_L^\ab}
            \arrow["{\cong}", from=1-1, to=1-2]
            \arrow["{\widehat{\iota^\times}}", from=1-1, to=2-1]
            \arrow["\Ver", from=1-2, to=2-2]
            \arrow["{\cong}", from=2-1, to=2-2]
        \end{tikzcd}.
    \end{equation*}
    Hence the injectivity of $\Ver$ is implied by that of $\widehat{\iota^\times}$, which we have already seen in (1).

    (3) By (1), we can assume without loss of generality that $L / K$ is a finite Galois extension, with Galois group $G = \Gal (L / K)$.
    Then it follows that \eqref{equation:2025.10.15.11.37.46} and \eqref{equation:2025.10.15.11.37.54} are also commutative diagrams of $G$-modules;
    in \eqref{equation:2025.10.15.11.37.54}, we see that the second and fourth rows from the top induce the long exact sequences, and make the following diagram commutative:
    \begin{equation*}
        \begin{tikzcd}
            1 & {U_K} & {K^\times} & {\ZZ_+} & {H^1(G, U_L)} & {H^1(G,L^\times)} \\
            1 & {U_K} & {(\widehat{L^\times})^G} & {\widehat\ZZ_+} & {H^1(G,U_L)} & {H^1(G,\widehat{L^\times})}
            \arrow[from=1-1, to=1-2]
            \arrow[from=1-2, to=1-3]
            \arrow[Rightarrow, no head, from=1-2, to=2-2]
            \arrow["{{\ord_L\vert_{K^\times}}}", from=1-3, to=1-4]
            \arrow[dashed, from=1-3, to=2-3]
            \arrow["\delta", from=1-4, to=1-5]
            \arrow[dashed, from=1-4, to=2-4]
            \arrow[from=1-5, to=1-6]
            \arrow[Rightarrow, no head, from=1-5, to=2-5]
            \arrow[dashed, from=1-6, to=2-6]
            \arrow[from=2-1, to=2-2]
            \arrow[from=2-2, to=2-3]
            \arrow[from=2-3, to=2-4]
            \arrow["\delta", from=2-4, to=2-5]
            \arrow[from=2-5, to=2-6]
        \end{tikzcd}.
    \end{equation*}
    The connecting homomorphism $\delta \colon \ZZ_+ \to H^1 (G, U_L)$ induces the injective homomorphism
    \begin{equation*}
(
\Coker (\ord_L \vert_{K^\times})
                =
            ) \,
            (\ZZ / e \ZZ)_+ \,\,\,
        \to \,\,\,
            H^1 (G, U_L),
    \end{equation*}
    ---which is an isomorphism by Hilbert's Theorem 90---and as a result, $e \widehat{\ZZ}_+$ is annihilated by $\delta \colon \widehat{\ZZ}_+ \to H^1(G, U_L)$.
    Therefore, the diagram
    \begin{equation*}
        \begin{tikzcd}
            1 & {U_K} & {\widehat{K^\times}} & {\widehat\ZZ_+} & 1 \\
            1 & {U_K} & {(\widehat{L^\times})^{G}} & {\widehat\ZZ_+} & {H^1(G,U_L)}
            \arrow[from=1-1, to=1-2]
            \arrow[from=1-2, to=1-3]
            \arrow[Rightarrow, no head, from=1-2, to=2-2]
            \arrow[from=1-3, to=1-4]
            \arrow[from=1-3, to=2-3]
            \arrow[from=1-4, to=1-5]
            \arrow["{e\cdot (-)}", from=1-4, to=2-4]
            \arrow[from=1-5, to=2-5]
            \arrow[from=2-1, to=2-2]
            \arrow[from=2-2, to=2-3]
            \arrow[from=2-3, to=2-4]
            \arrow["\delta", from=2-4, to=2-5]
        \end{tikzcd}
    \end{equation*}
    with exact rows commutes, and yields the exact sequence of cokernels:
    \begin{equation*}
1
        \to
            (\widehat{L^\times})^{G} / \widehat{K^\times}
        \to
            (\widehat{\ZZ} / e \widehat{\ZZ})_+
        \xrightarrow{\cong}
            H^1 (G, U_L).
    \end{equation*}
    Hence $(\widehat{L^\times})^G = \widehat{K^\times}$.
\end{proof}

\begin{proof}[Proof of \cref{proposition:2025.10.15.11.37.22}]
    By definition, $Z (G_K^{m + 1})$ is precisely the set of $G_K$-invariant elements in $G_K^{m + 1}$, if we let $G_K$ act on $G_K^{m + 1}$ by conjugation.
    Hence it follows from \cref{lemma:2025.10.15.11.37.28} that
    \begin{equation*}
Z (G_K^{m + 1})
        =
            \Gal (K^{m + 1} / K^m)^{G_K}.
    \end{equation*}
    To demonstrate that $\Gal (K^{m + 1} / K^m)^{G_K}$ is trivial, we first note that $\Gal (K^{m + 1} / K^m)$ can be written as an inverse limit of profinite groups:
    \begin{multline*}
\Gal (K^{m + 1} / K^m)
        =
            G_K^{[m]} / G_K^{[m + 1]}
        =
            (G_K^{[m]})^\ab \\
        =
            \left(
                \bigcap_{H \in \mathscr{H}_m (G_K)} H
            \right)^\ab
        =
            \left(
                \varprojlim_{H \in \mathscr{H}_m (G_K)} H
            \right)^\ab
        =
            \varprojlim_{H \in \mathscr{H}_m (G_K)} H^\ab,
    \end{multline*}
    where $\mathscr{H}_m (G_K)$ is the set of open normal subgroups of $G_K$ containing $G_K^{[m]}$, ordered by reverse inclusion.
    Suppose that $L_1 / K$, $L_2 / K$ are finite Galois subextensions of $K^m / K$ with $L_1 \subseteq L_2$.
    It is clear from local class field theory that the diagram
    \begin{equation*}
        \begin{tikzcd}
            {G_{L_1}^\ab} & {L_1^\times} \\
            {G_{L_2}^\ab} & {L_2^\times}
            \arrow["{\Art_{L_1}}"', from=1-2, to=1-1]
            \arrow[from=2-1, to=1-1]
            \arrow["{\N_{L_2/L_1}}"', from=2-2, to=1-2]
            \arrow["{\Art_{L_2}}"', from=2-2, to=2-1]
        \end{tikzcd}
    \end{equation*}
    commutes, where the left vertical arrow is induced by the inclusion map $G_{L_2} \to G_{L_1}$, and $\N_{L_2 / L_1}$ is the \emph{norm} map.
    We obtain the commutative diagram
    \begin{equation*}
        \begin{tikzcd}
            {G_{L_1}^\ab} & {\widehat{L_1^\times}} \\
            {G_{L_2}^\ab} & {\widehat{L_2^\times}}
            \arrow["\cong"', from=1-2, to=1-1]
            \arrow[from=2-1, to=1-1]
            \arrow["{(\N_{L_2/L_1})^\wedge}"', from=2-2, to=1-2]
            \arrow["\cong"', from=2-2, to=2-1]
        \end{tikzcd}
    \end{equation*}
    by profinite completion, and the isomorphism
    \begin{equation*}
\varprojlim_{H \in \mathscr{H}_m (G_K)} H^\ab
        \xrightarrow{\cong}
            \varprojlim_{L / K} \widehat{L^\times}
    \end{equation*}
    which respects the $G_K$-action, by taking inverse limits.
    Hence
    \begin{equation}
        \label{equation:2025.10.15.11.38.04}
\Gal(K^{m + 1} / K^m)^{G_K}
        \cong
            \varprojlim_{L / K} \widehat{K^\times}
    \end{equation}
    by \cref{lemma:2025.10.15.11.37.39} (3).
    (Note that the limit is taken over the inverse system in which
    the homomorphism
    \begin{equation*}
        (-)^{[L_2 : L_1]}
        \colon \,\,\,
            \widehat{K^\times} \,
            (=(\widehat{L_2^\times})^{G_K}) \,\,\,
        \to \,\,\,
            \widehat{K^\times} \,
            (=(\widehat{L_1^\times})^{G_K})
    \end{equation*}
    is assigned to each pair $L_1 \subseteq L_2$.)

    It remains to show that if $x = \{ x_L \}_{L / K}$ belongs to the right hand side of \eqref{equation:2025.10.15.11.38.04}, then $x_L = 1$ for all $L / K$.
    This can be verified as follows: for all $n \geq 1$, we can always find a finite extension $L_{(n)} / L$ such that $[L_{(n)} : L] = n$ and $L_{(n)} \subseteq K^m$, e.g.,
    \begin{equation*}
        L_{(n)}
        \coloneqq
            L (
                \mu_{
                    {\sharp (\mathfrak{k}_L)}^n - 1
                } (K^\alg)
            ).
    \end{equation*}
    Therefore, $x_L = (x_{L_{(n)}})^n$ for all $n \geq 1$, and hence
    \begin{equation*}
x_L
        \in
            \bigcap_{n \geq 1} (\widehat{K^\times})^n
        =
            \{ 1 \},
    \end{equation*}
    which proves the claim.
\end{proof}

\printbibliography
 
\end{document}